\documentclass[11pt,a4paper,oneside,reqno]{amsart}
\usepackage{amsmath,amsthm,amsfonts,amssymb,bm}
\usepackage{mathrsfs}
\usepackage{color}
\usepackage[dvipsnames]{xcolor}
\usepackage{graphicx}
\usepackage{comment}
\usepackage{commath}  
\usepackage{mathtools}

\makeatletter    
\def\l@section{\@tocline{1}{0,2pt}{2pc}{24mm}{\ \ }}

\usepackage[normalem]{ulem}
\usepackage[top=1.06in, bottom=1.06in, left=1.06in, right=1.06in]{geometry}
\usepackage[pdftex, unicode=true,
            colorlinks,
            linkcolor=blue,
            anchorcolor=blue,
            citecolor=blue
            ]{hyperref}

\usepackage{subcaption}
\usepackage{titlesec}
\titleformat{\section}{\vskip5pt\large\bfseries}{\thesection.}{0.5em}{\centering\vspace{3pt}}
\titleformat{\subsection}{\vskip5pt\normalsize\bfseries}{\thesubsection.}{0.5em}{}

\theoremstyle{definition}
\newtheorem{example}{Example}[section]
\theoremstyle{plain}
\newtheorem{theorem}{Theorem}
\newtheorem{lemma}{Lemma}

\newtheorem{proposition}{Proposition}
\newtheorem{remark}{Remark}[section]

\newtheorem{problem}{Problem}

\numberwithin{theorem}{section}
\numberwithin{proposition}{section}
\numberwithin{lemma}{section}
\numberwithin{equation}{section}

\usepackage{xcolor}

\definecolor{link}{rgb}{0.45,0.51,0.67}

\newcommand{\Ker}{\text{Ker}}

\newcommand{\Man}{\mathcal{M}}

\newcommand{\RM}{\mathrm{RM}}
\newcommand{\Vol}{\mathrm{Vol}}

\newcommand{\normt}[1]{%
	\vert\kern-0.9pt\vert\kern-0.9pt\vert #1
	\vert\kern-0.9pt\vert\kern-0.9pt\vert}

\allowdisplaybreaks

\begin{document}

\title[]{Finite element methods for isometric embedding of Riemannian manifolds} 

\author{Guangwei Gao$^*$}
\thanks{$^*$\,Department of Applied Mathematics, The Hong Kong Polytechnic University, Hong Kong.
E-mail: guang-wei.gao@polyu.edu.hk, buyang.li@polyu.edu.hk}

\author{Kaibo Hu$^\dagger$}
\thanks{$^\dagger$\,Mathematical Institute, University of Oxford, Oxford OX2 6GG, United Kingdom.
E-mail: kaibo.hu@maths.ox.ac.uk, ganghui.zhang@maths.ox.ac.uk}

\author{Buyang Li$^*$}

\author{Ganghui Zhang$^\dagger$}

\date{}

\begin{abstract}
The isometric embedding problem for Riemannian manifolds, which connects intrinsic and extrinsic geometry, is a central question in differential geometry with deep theoretical significance and wide-ranging applications. Despite extensive analytical progress, the nonlinear and degenerate nature of this problem has hindered the development of rigorous numerical analysis in this area. As the first step toward addressing this gap, we study the numerical approximation of Weyl’s problem, i.e., the isometric embedding of two-dimensional Riemannian manifolds with positive Gaussian curvature into $\mathbb{R}^3$, by establishing a new weak formulation that naturally leads to a numerical scheme well suited for high-order finite element discretization, and conducting a systematic analysis to prove the well-posedness of this weak formulation, the existence and uniqueness of its numerical solution, as well as its convergence with error estimates. This provides a foundational framework for computing isometric embeddings of Riemannian manifolds into Euclidean space, with the goal of extending it to a broader range of cases and applications in the future. Our framework also extends naturally to the isometric embedding of the Ricci flow, with rigorous error estimates, enabling the visualization of geometric evolutions in intrinsic curvature flows. Numerical experiments support the theoretical analysis by demonstrating the convergence of the method and its effectiveness in simulating isometric embeddings of given Riemannian manifolds as well as Ricci flows.\\


\noindent{\sc Keywords.} Isometric embedding, Riemannian manifold, intrinsic curvature flow, finite element method, convergence, Regge finite element. 
\end{abstract}

\setlength\abovedisplayskip{4pt}
\setlength\belowdisplayskip{4pt}

\subjclass[2020]{35R01, 65M60, 65M12, 53E20, 53C21}


\maketitle

\vspace{-15pt}

\setcounter{tocdepth}{1}
\tableofcontents

\section{Introduction}

Given a Riemannian manifold \( \Man \) with a target metric \( g \), the problem of isometric embedding is to find an embedding map \( r: \Man \rightarrow \mathbb{R}^N \) such that the pullback of the Euclidean metric under this map coincides with the target metric \( g \). This is a classical and fundamental question in differential geometry.
In 1956, Nash proved that any smooth $n$-dimensional Riemannian manifold admits a global smooth isometric embedding into some Euclidean space $\mathbb{R}^N$ \cite{nash1956imbedding}. Subsequent work by Günther reduced the required target dimension \cite{Guenther1989NashEmbedding}. It remains unclear whether the known bounds on $N$ are optimal. For two-dimensional manifolds, Weyl posed the problem of finding a global isometric embedding of a Riemannian metric with positive Gaussian curvature into $\mathbb{R}^3$ \cite{Weyl1916}. Lewy resolved the analytic case \cite{Lewy1938}, and Nirenberg independently established a complete solution for smooth metrics \cite{Nirenberg1953}. For a comprehensive review of isometric embeddings of Riemannian manifolds in Euclidean spaces, we refer the reader to \cite{han2006isometric}. Despite these significant theoretical achievements, a critical gap remains from the perspective of computational mathematics: all aforementioned existence proofs are non-constructive, providing no direct algorithm for computing the embedded surface numerically, hindering the ability to apply these results in the numerical settings. 

The demand for constructive and numerically robust methods for isometric embedding is driven by applications across a broad range of computational sciences. From a geometric perspective, isometric embedding bridges intrinsic and extrinsic viewpoints and enables the visualization of the evolving geometry of intrinsic geometric flows, such as the Ricci flow \cite{rubinstein2005visualizing}. In numerical relativity, isometric embedding is used to study the geometry of black hole horizons \cite{bondarescu2002isometric,tichy2014new, ray2015adiabatic, jasiulek2012isometric}. Moreover, many definitions of quasi-local energy in general relativity, including the Brown--York and Wang--Yau energies, require isometric embeddings of convex surfaces in $\mathbb{R}^3$ \cite{BrownYork1993, wang2009quasilocal}. In elasticity, surface models of thin shells are closely related to isometric embedding: the metric tensor encodes the strain tensor, while the embedding represents the deformation of the elastic body \cite{ciarlet2022mathematical, ciarlet2016korn}. In nonlinear bending models, isometric embedding arises as a constraint in the minimization of bending energies for nonlinear plates \cite{bartels2017bilayer}. In computer graphics, isometric embedding has applications in geometry processing and art-directed shape design \cite{chern2018shape}. At the discrete level, isometric embedding is also referred to as the Discrete Geometry Problem (DGP), with applications in molecular distance geometry, sensor network localization, and graph drawing \cite{liberti2011molecular}. Moreover, the isometric embedding problem is used to embed high-dimensional statistical data into a low-dimensional submanifold, closely related to multidimensional scaling (MDS) in statistics \cite{borg2005modern}.


Numerically approximating isometric embedding presents profound difficulties. Firstly, as demonstrated in~\eqref{eq:isoemb}, an isometric embedding satisfies a \emph{nonlinear} system of partial differential equations (PDEs). It possesses the property that any solution remains valid after composition with a rigid motion. This property, known as \emph{rigidity}, renders~\eqref{eq:isoemb} \emph{degenerate}.
For this nonlinear and degenerate PDE, formulating a weak formulation that is both mathematically well-posed and numerically amenable to discretization is far from straightforward.
Furthermore, the nonlinear and degenerate nature of the problem introduces significant challenges in the numerical analysis. This includes handling the degeneration at the discrete level, establishing the existence and uniqueness of numerical solutions, and performing a rigorous error and convergence analysis.

In~\cite{hotz2004isometric}, Hotz and Hagen proposed an algorithm for computing isometric embeddings by reconstructing the surface ring by ring using Euclidean distances. In~\cite{jasiulek2012isometric,tichy2014new}, spectral methods were employed to discretize the PDEs governing isometric embeddings. In~\cite{ray2015adiabatic,chern2018shape,bondarescu2002isometric}, the problem was reformulated as a minimization problem and solved via optimization algorithms. Moreover, a Delaunay triangulation algorithm is proposed to accommodate abstract Riemannian manifold in \cite{boissonnat2018delaunay}. However, despite these developments, a rigorous numerical analysis---including convergence and error estimates---of any method for any formulation of the isometric embedding problem has remained unavailable. This work represents a first step toward addressing this gap. 

Specifically, we study the numerical approximation for the isometric embedding of two-dimensional Riemannian manifolds with positive Gaussian curvature into \( \mathbb{R}^3 \). We establish a new weak formulation, which naturally leads to a numerical scheme well-suited for high-order finite element discretization. We conduct a systematic study to rigorously prove the well-posedness of this new weak formulation, the existence and uniqueness of its numerical approximation, as well as its convergence and error estimates.
This provides a foundational approach for this class of problems, with the aim of extending it to a broader range of cases and various applications in the future.

The starting point of our work is inspired by the continuity method for proving the existence of isometric embeddings of a metric with positive Gaussian curvature \cite{Nirenberg1953, han2006isometric}. This method first constructs a continuous family of metrics \( g(t) \), \( 0 \le t \le 1 \), connecting the given traget metric \( g \) to the induced metric of the unit sphere \( S^2 \) via the uniformization theorem, with all metrics \( g(t) \) maintaining positive Gaussian curvature. It is then shown that the set of parameters \( t \in [0,1] \) for which the metric \( g(t) \) admits an isometric embedding \( r(t): \Man \to \mathbb{R}^3 \) is both open and closed. As a result, the target metric \( g = g(1) \) necessarily admits an isometric embedding into \( \mathbb{R}^3 \). While this approach guarantees existence, it remains non-constructive. 
We propose a new formulation that transforms this theoretical analysis into a numerical method which solves the \emph{embedding flow} \( r(t) \) in the continuity method described above. Starting from a known initial embedding, the flow is evolved by solving a linearized system for the velocity \( \partial_t r(t) \), derived from the differentiating the isometric embedding equation. The initial embedding is then gradually deformed according to the velocity computed from this linearized system, ultimately leading to the final isometric embedding of the target metric.
Note that the continuity method transforms the problem of embedding a fixed metric into a sequence of dynamic embedding problems. In contrast, our approach further converts these nonlinear dynamic embedding problems in the continuity method into the task of solving a series of PDEs for the velocity of the flow. In this way, our method bridges the gap between non-constructive existence proofs and computational realizations of Riemannian embeddings. 
This evolution not only recovers the isometric embedding for a fixed metric but also naturally extends to the isometric embedding of evolving metrics, where a family of metrics \( g(t) \) serves as a solution to the intrinsic curvature flow, such as the Ricci flow \cite{gawlik2019finite,gao2025ricci, rubinstein2005visualizing}. In this context, the corresponding isometric embedding \( r(t) \) can be used to visualize the geometric evolution of the flow.



Notably, the degeneracy of the nonlinear isometric embedding problem manifests in the linearized system governing the velocity, causing the velocity equation to exhibit a nontrivial kernel corresponding to the space of \emph{infinitesimal rigid motions}. To address this issue, we propose a new variational formulation that solves for the velocity as the component orthogonal to the space of {infinitesimal rigid motions}, thereby uniquely determining the velocity. 
As a result, the new variational formulation in \eqref{eq:vel} leads to a saddle point system, and its well-posedness can be proven using the abstract theory in \cite{boffi2013mixed}. The key to the proof hinges on establishing a \emph{Korn-type inequality} on manifolds. In the literature, Korn-type inequalities on manifolds primarily arise in three contexts.
The first is in the study of surface Stokes~\cite{bonito2020divergence} and Navier--Stokes equations~\cite{jankuhn2018incompressible}, where only tangential vector fields are considered. The second context is found in pure Riemannian geometry, where intrinsic language is used~\cite{chen2002riemannian, duduchava2010lions}. The third, which is closest to our setting, concerns surface models of thin shells~\cite{ciarlet2016korn}, although it typically focuses on \emph{open} surfaces with positive Gaussian curvature.
These Korn-type inequalities also differ in the conditions imposed on the vector field; specifically, they differ in the set on which the vector field must vanish for a Korn inequality to hold.
In \cite{jankuhn2018incompressible, bonito2020divergence}, tangential vector fields are required to be orthogonal to the space of Killing vector fields; in \cite{ciarlet2016korn}, the tangential component vanishes on the boundary of an open surface; and in \cite{chen2002riemannian}, it suffices that they vanish on a subset of Hausdorff dimension greater than \(n-2\) of an \(n\)-dimensional manifold. 
None of these existing Korn-type inequalities are sufficient for our purposes. Therefore, in Lemma~\ref{lm:Korn}, we present an appropriate Korn inequality for two-dimensional closed Riemannian manifolds with positive Gaussian curvature. Our proof builds upon the study of the linearized equation for isometric embeddings in \cite[Lemma 9.2.2]{han2006isometric}, which is detailed in~\ref{appendix:Korn}.

The proposed variational formulation admits a natural finite element discretization, in which high-order Lagrange elements are used to approximate both the velocity and the embedding map. Meanwhile, the metric tensor is discretized using \emph{Regge elements}, a recently developed family of finite elements designed specifically for discretizing metric tensors on simplicial triangulations \cite{regge1961general,christiansen2004characterization, li2018regge}.
This choice of finite element pair—Lagrange elements for the velocity and embedding, and Regge elements for the metric tensor—naturally aligns within the framework of differential complexes. Specifically, in \cite{hu2022nonlinear}, isometric embedding is encoded in a nonlinear complex, which, upon linearization, corresponds precisely to the elasticity complex. Notably, Lagrange and Regge elements satisfy a discrete version of this elasticity complex, as established in \cite{christiansen2011linearization}.
Recent work by Gawlik et al. has initiated a systematic study of using finite elements to discretize intrinsic geometric quantities, such as the metric, connection, and intrinsic curvature \cite{berchenko2024finite, gawlik2020high, gopalakrishnan2023analysis, gopalakrishnan2023generalizing}, as well as to approximate intrinsic geometric flows \cite{gao2025ricci, gawlik2019finite}. However, the study of finite element methods for isometric embedding, as a fundamental problem, remains an open area of research. 

At the discrete level, under the hypothesis that the embedding map is well approximated, a discrete Korn inequality is established (see Lemma~\ref{lm:dis_Korn}), which characterizes the infinitesimal rigidity at the discrete level, as discussed in Remark \ref{rmk:rigid}, and further ensures the existence and uniqueness of the numerical solution.
The proof of this hypothesis constitutes the main theoretical result of this paper, i.e., we establish the convergence and error estimates of the finite element semi-discretization for the isometric embedding problem.
The primary challenge in the error analysis stems from the lack of full \(H^1\) coercivity. Specifically, coercivity is achieved through a surface Korn-type inequality (see \eqref{eq:korn}), where the tangential component is controlled in the \(H^1\)-norm, while the normal component is controlled only in the \(L^2\)-norm. As a result, coercivity degenerates in the normal direction.
However, in the stability estimate of the error equation, the nonlinear terms on the right-hand side, arising from perturbations, include the \(W^{1,\infty}\)- and \(H^1\)-norms of the error.
To address this issue, we employ inverse inequalities to reduce the \(W^{1,\infty}\)- and \(H^1\)-norms of the error to the \(L^2\)-norm (see \eqref{eq:inv}). This requires the use of high-order finite element spaces to ensure that these terms remain sufficiently small and can be controlled by the left-hand side via Gr\"onwall's inequality.
As a result, we establish the convergence and error estimates for the finite element semi-discretization of the isometric embedding problem for polynomial degrees \(k \ge 5\), as stated in Theorem~\ref{thm:err}.

The rest of this paper is organized as follows.
In Section~\ref{sec:isoemb}, we review the problem of isometric embedding of Riemannian metrics
and formulate the problems that will be addressed numerically.
In Section~\ref{sec:num}, we propose a new variational formulation for the isometric embedding problem,
and establish its well-posedness via a Korn inequality.
We then propose its finite element semi-discretization, followed by our main theorems on the well-posedness
and convergence of the numerical scheme.
A discrete Korn inequality is established in Section~\ref{sec:korn}.
It is used in Section~\ref{sec:pf_main}, which presents the proof of the main Theorem~\ref{thm:err}.
In Section~\ref{sec:numerical}, we present extensive numerical experiments to demonstrate the convergence
of the proposed method and to illustrate the simulation of isometric embeddings.

\section{Isometric embedding: background and notations}\label{sec:isoemb}

\subsection{Basic notations}

Let \(\Man\) be a closed, smooth, two-dimensional manifold embedded in \(\mathbb{R}^3\) via the inclusion \(i_{\Man}: \Man \hookrightarrow \mathbb{R}^3\); we assume that $\Man$ is diffeomorphic to the $2$-sphere $S^2$.
Let $\partial_i, i = 1,2$ be a local frame of the tangent bundle of $\Man$ and $\mathrm{d}x^i$ are the dual coframe with $\mathrm{d}x^i(\partial_j) = \delta_j^i$. Throughout the paper we adopt the Einstein summation convention over repeated indices.
We denote by $\mathrm{d}$ the exterior derivative on $\Man$. For a smooth function $f: \Man \rightarrow \mathbb{R}$, we write 
\[ \mathrm{d} f = \partial_i f \mathrm{d} x^i \in \Lambda^1(\Man),  \]
where $\Lambda^1(\Man)$ denotes the space of smooth differential 1-forms. 
We denote by $\odot$ the symmetrized tensor product, given maps $u,v: \Man \rightarrow \mathbb{R}^3$, the second order symmetric covariant tensor field $\mathrm{d} u \odot \mathrm{d} v$ is defined as
\[ \mathrm{d} u \odot \mathrm{d} v = \frac{1}{2}(\partial_i u \cdot \partial_j v 
+ \partial_i v \cdot \partial_j u ) \mathrm{d}x^i \otimes \mathrm{d} x^j \in S_2^0(\Man), \]
where $S_2^0(\Man)$ denotes the space of second order symmetric covariant tensor field on $\Man$.
Let $\varphi: \Man \rightarrow \mathcal{N}$ be a smooth map between the manifolds $\Man$ and $\mathcal{N}$. Its differential, denoted by $\varphi_*: T\Man \rightarrow T\mathcal{N}$, maps the tangent bundles 
$T\Man$ to $T\mathcal{N}$. This induces a mapping from the space of covariant tensor fields $S_2^0(\mathcal{N})$ to $S_2^0(\Man)$, called the \emph{pullback operator} and denoted by $\varphi^*$, defined by
\[
    (\varphi^* \sigma)(p)(\partial_i, \partial_j)
    = \sigma(\varphi(p))\bigl(\varphi_* \partial_i , \varphi_* \partial_j \bigr),
    \qquad \text{where } \sigma \in S_2^0(\mathcal{N}).
\]
The pullback operator commutes with both the exterior derivative and the tensor product \cite{johnlee}. 
Let $\delta_{\mathbb{R}^3}$ denote the Euclidean metric on $\mathbb{R}^3$.
The pullback of the Euclidean metric under the inclusion \(i_{\Man}\!: \Man \hookrightarrow \mathbb{R}^3\) induces a Riemannian metric \(g_{\Man}\) on \(\Man\), written
\[
g_{\Man} = (g_{\Man})_{ij}\,\mathrm{d}x^i \otimes \mathrm{d}x^j,\qquad
(g_{\Man})_{ij} = \partial_i i_{\Man}\cdot \partial_j i_{\Man},\ \ i,j=1,2 .
\]
This metric is referred to be the \emph{induced metric} on \(\Man\). 
We write \((g_{\Man})^{ij}\) for the components of the inverse matrix corresponding to \(((g_{\Man})_{ij})_{1 \leq i,j \leq 2}\), and \(\det(g_{\Man})\) for the determinant of this matrix.
The volume form induced by $g_\Man$ is then given by 
\[ \Vol_{\Man} = \sqrt{\det(g_{\Man})}\mathrm{d}x^1 \wedge \mathrm{d} x^2. \]
For tensor fields $\sigma,\omega$ and vector-valued functions $u,v:\Man\to\mathbb{R}^3$, we define the $L^2$ inner products on $\Man$ by
\begin{equation}\label{eq:inner-Man} 
(u,v)_\Man=\int_{\Man}(u\cdot v)\,\Vol_{\Man},\qquad
(\sigma,\omega)_\Man=\int_{\Man}\langle\sigma,\omega\rangle_{g_\Man}\,\Vol_{\Man}.
\end{equation} 
Here $(u\cdot v)$ denotes the Euclidean inner product in $\mathbb{R}^3$, and $\langle\sigma,\omega\rangle_{g_\Man}$ denotes the inner product induced by $g_\Man$, defined by
\[
\langle\sigma,\omega\rangle_{g_\Man}=(g_\Man)^{ik}(g_\Man)^{jl}\,\sigma_{ij}\,\omega_{kl},
\quad \text{with } \sigma_{ij}=\sigma(\partial_i,\partial_j)\ \text{and}\ \omega_{kl}=\omega(\partial_k,\partial_l).\]
We use $W^{k,p}(\Man)$ to denote the Sobolev space on $\Man$ with respect to the induced metric $g_{\Man}$, and denote $L^p(\Man) = W^{0,p}(\Man)$ and $H^k(\Man) = W^{k,2}(\Man)$. The space of $L^2$ second order symmetric covariant tensor field on $\Man$ is denoted by $L^2S_2^0(\Man)$. We refer to \cite{aubin1998some} for more detailed discussion of Sobolev space and corresponding theory on general Riemannian manifolds.

\subsection{Problems to be addressed numerically}

We are mainly concerned with \emph{Weyl's problem}~\cite{Weyl1916,Heinz1962}, which can be stated as follows. 
\begin{problem}[Weyl's problem]\label{Pro:iso-embd}
Given a smooth Riemmannian metric $g$ with positive Gaussian curvature on a two dimensional manifold $\Man$, find an embedding map $r:\Man\rightarrow \mathbb{R}^3$ such that
\begin{equation}\label{eq:isoemb-RM}
\mathrm{d}r \odot \mathrm{d}r = g.
\end{equation}
Here, \( \mathrm{d}r \odot \mathrm{d}r \) represents the pullback of the Euclidean metric via the embedding \( r \), i.e., 
\[
\mathrm{d}r \odot \mathrm{d}r = (\partial_i r \cdot \partial_j r)\,\mathrm{d}x^i \otimes \mathrm{d}x^j 
= (r^*\delta_{\mathbb{R}^3})(\partial_i, \partial_j)\,\mathrm{d}x^i \otimes \mathrm{d}x^j = r^*\delta_{\mathbb{R}^3} 
.
\]
\end{problem}

This problem is governed by a nonlinear and degenerate PDE system, and the corresponding weak formulation is far from straightforward.
Our approach for this problem is inspired by the continuity method developed in~\cite{Nirenberg1953,han2006isometric} for proving the existence of embeddings.
Let \( \Phi : S^2 \to \Man \) be a diffeomorphism. Then the pullback metric \( \Phi^* g \) on \( S^2 \) has positive Gaussian curvature: indeed, \( \Phi \) is an isometry between the Riemannian manifolds \( (S^2, \Phi^* g) \) and \( (\Man, g) \), and hence it preserves Gaussian curvature.
By the uniformization theorem for two-dimensional Riemannian manifolds (see, for instance,~\cite[Lemma~9.1.4]{han2006isometric}), there exists a continuous family of metrics
\(
\{ g_{S^2}(t) \}_{0 \le t \le 1}
\)
on \( S^2 \) such that each \( g_{S^2}(t) \) has positive Gaussian curvature and
\[
g_{S^2}(1) = \Phi^* g,
\qquad
g_{S^2}(0) = i_{S^2}^* \delta_{\mathbb{R}^3},
\]
where \( i_{S^2} : S^2 \hookrightarrow \mathbb{R}^3 \) denotes the standard embedding.
This family of metrics on \( S^2 \) induces a corresponding family of time-dependent metrics on \( \Man \) via
\[
g(t) = (\Phi^{-1})^* g_{S^2}(t),
\]
which provides a continuous path connecting the initial metric
\(
g(0) = (i_{S^2} \circ \Phi^{-1})^* \delta_{\mathbb{R}^3}
\)
to the target metric \( g(1) = g \). By the same reasoning as above, each metric \( g(t) \) along this path has positive Gaussian curvature.

In this way, we transform the static problem~\eqref{eq:isoemb-RM} into a dynamic one.
Starting from \( r(0)= i_{S^2}\circ \Phi^{-1} \), we seek an \emph{embedding flow}
\( r(t): \Man \to \mathbb{R}^3 \) that realizes the evolving metric \( g(t) \) for each \( t \in [0,1] \), with \( r(1) \) being a solution to~\eqref{eq:isoemb-RM}.
This dynamic formulation is also natural in the geometric visualization of intrinsic curvature flows.
For intrinsic flows such as the Ricci flow, the unknown is a time-dependent Riemannian metric \( g(t) \);
to visualize its evolution, one seeks a family of isometric embeddings \( r(t): \Man \to \mathbb{R}^3 \) realizing \( g(t) \) at each time.
The evolving geometry can then be tracked through the embedded surfaces
\[
\Gamma(t)=\{\, r(p,t): p\in\Man \,\}\subset \mathbb{R}^3 .
\]
Motivated by these two scenarios (finding an embedding flow for solving \eqref{eq:isoemb-RM} and isometrically embedding intrinsic curvature flows into Euclidean space), we formulate our second problem as follows.
\begin{problem}\label{Pro:dynamic}
Let \( g(t) \) be a time-dependent Riemannian metric on \( \Man \) whose Gaussian curvature remains strictly positive for all \( t\in[0,T] \).
Given an initial condition \( r(0) \), find an embedding flow
\[
r(t): \Man\times[0,T]\to\mathbb{R}^3
\]
such that
\begin{equation}\label{eq:isoemb}
\mathrm{d}r(t)\odot \mathrm{d}r(t)=g(t),\qquad t\in[0,T].
\end{equation}
\end{problem}

The existence of solutions to Problems~\ref{Pro:iso-embd} and~\ref{Pro:dynamic} is guaranteed by Weyl's embedding theorem; see, e.g.,~\cite[Theorem~9.0.1]{han2006isometric}.
It states that any metric with strictly positive Gaussian curvature on a closed two-dimensional manifold admits an isometric embedding into \( \mathbb{R}^3 \).
The uniqueness of such embeddings, referred to as \emph{rigidity}, is discussed in~\cite[Chapter~8]{han2006isometric}: under the same assumption on the metric, any two isometric embeddings \( r_1 \) and \( r_2 \) differ only by a rigid motion of \( \mathbb{R}^3 \), i.e., there exist \( Q \in \mathrm{O}(3) \) and \( \beta \in \mathbb{R}^3 \) such that
\begin{equation}\label{eq:Rigidity} 
r_1 = Q r_2 + \beta .
\end{equation}

Both the isometric embedding of a fixed metric and the visualization of intrinsic curvature flows can be reduced to solving a sequence of dynamic embedding problems of the form in Problem~\ref{Pro:dynamic}.
However, the embedding flow \( r(t) \) in Problem~\ref{Pro:dynamic} still satisfies a nonlinear PDE system, and the solution is unique only up to rigid motions.
Crucially, the \emph{dynamic} nature of Problem~\ref{Pro:dynamic} allows us to differentiate~\eqref{eq:isoemb} in time and instead study the PDE governing the velocity:
\begin{equation}\label{eq:pt_isoemb}
2\,\mathrm{d}r \odot \mathrm{d}v = \partial_t g,
\quad \text{ where } \quad 
\partial_t r = v,
\end{equation}
which is a linear PDE for \(v\) and can be viewed as the linearization of~\eqref{eq:isoemb}.
The existence of solutions to~\eqref{eq:pt_isoemb} is established in~\cite[Lemma~9.2.2]{han2006isometric} under the same curvature assumption, namely that \( g(t) \) has strictly positive Gaussian curvature.
The corresponding uniqueness, referred to as \emph{infinitesimal rigidity}, is discussed in~\cite[Theorem~8.2.3]{han2006isometric}:
under the same assumption on \( g(t) \), any two solutions \( v_1 \) and \( v_2 \) of~\eqref{eq:pt_isoemb} satisfy
\begin{equation}\label{eq:inf-rigid}
v_1 - v_2 \in \RM[r],
\qquad
\text{where}\quad
\RM[r] = \{\, \alpha \times r + \beta \mid \alpha, \beta \in \mathbb{R}^3 \,\}.
\end{equation}
Here \( \RM[r] \) is the space of \emph{infinitesimal rigid motions}, which is a six-dimensional linear space.

Motivated by the discussion above, we summarize the existence and uniqueness of a smooth embedding flow \(r(t)\) satisfying~\eqref{eq:isoemb}, whose time derivative \(\partial_t r\) solves~\eqref{eq:pt_isoemb} and is uniquely determined by the orthogonality condition~\eqref{eq:ptr_orth_RM}. The proof is referred to \ref{appendix:Wely-emb}.
\begin{theorem}\label{thm:wely-emb}
Let \(g(t)\) be a smooth Riemannian metric on \(\Man\), depending smoothly on \(t\in[0,T]\), with smooth time derivative \(\partial_t g(t)\).
Assume that the Gaussian curvature of \(g(t)\) remains strictly positive for all \(t\in[0,T]\).
Then there exists a {unique} smooth embedding flow \(r(t):\Man\to\mathbb{R}^3\), depending smoothly on \(t\in[0,T]\), such that \(r(t)\) satisfies~\eqref{eq:isoemb} and its time derivative \(\partial_t r\) satisfies~\eqref{eq:pt_isoemb} together with the orthogonality condition
\begin{equation}\label{eq:ptr_orth_RM}
\int_{\Man} \partial_t r \cdot (\alpha \times r + \beta)\,\Vol_{\Man} = 0,
\qquad \forall\, \alpha,\beta \in \mathbb{R}^3.
\end{equation}
\end{theorem}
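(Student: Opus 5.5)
Given an initial embedding $r(0)$ with $\mathrm{d}r(0)\odot\mathrm{d}r(0)=g(0)$, the plan is to build the flow by composing a family of isometric embeddings with a time-dependent rigid motion. I will not integrate the velocity equation directly, since that would require a priori estimates which are delicate in the degenerate setting. The rigid motion is the only freedom left by rigidity~\eqref{eq:Rigidity}, and I will fix it through the orthogonality condition~\eqref{eq:ptr_orth_RM}.

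\emph{Step 1: a smooth family of embeddings.} First produce some smooth family $\tilde r(t)$ with $\mathrm{d}\tilde r(t)\odot\mathrm{d}\tilde r(t)=g(t)$ and $\tilde r(0)=r(0)$. By Weyl's theorem each $g(t)$ admits an embedding, unique up to rigid motion. Smooth dependence on $t$ is the main obstacle, and I would handle it by the continuity-method argument of \cite{Nirenberg1953,han2006isometric}. Near a given $t_0$, solve the nonlinear equation $\mathrm{d}r\odot\mathrm{d}r=g(t)$ by the implicit function theorem in H\"older spaces, using the linearization $2\,\mathrm{d}r\odot\mathrm{d}v=h$. By \cite[Lemma~9.2.2]{han2006isometric} this linearization is surjective, with a tame right inverse on the complement of $\RM[r]$, and by \cite[Theorem~8.2.3]{han2006isometric} its kernel is exactly $\RM[r]$. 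Restricting $v$ to a complement of $\RM[r]$ then gives an isomorphism, hence a local smooth family. The local pieces are glued by rigid motions, and compactness of $[0,T]$ gives a global family. Bootstrapping with elliptic regularity (the Darboux equation is elliptic for positive curvature) gives $C^\infty$ smoothness in space and time. If the implicit function theorem loses derivatives, I would fall back on the Nash--Moser-type smoothing used in \cite{han2006isometric}.

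\emph{Step 2: fixing the rigid motion.} Seek $r(t)=Q(t)\tilde r(t)+\beta(t)$ with $Q(t)\in \mathrm{SO}(3)$, $Q(0)=I$ and $\beta(0)=0$. Then $r(t)$ is still isometric for $g(t)$ and differentiating gives~\eqref{eq:pt_isoemb}. Write $\dot Q=A Q$ with $A$ skew, $A x=a\times x$. Then
\[
\partial_t r = a\times (r-\beta) + \dot\beta + Q\partial_t\tilde r .
\]
Imposing~\eqref{eq:ptr_orth_RM} for all six test motions $\alpha\times r+\beta'$ gives six linear equations for the six unknowns $(a,\dot\beta)$. The coefficient matrix is the Gram matrix of a basis of $\RM[r(t)]$ in $L^2(\Man)$ (with a shift by $\beta$ that can be absorbed). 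It is invertible because $r(t)$ is an embedding, so its image is not contained in a line and $\RM[r]$ is genuinely six-dimensional. Hence $(a,\dot\beta)$ depends smoothly on $(Q,\beta,t)$. Equivalently, $\partial_t r$ is the $L^2$-orthogonal projection of $Q\partial_t\tilde r$ onto $\RM[r]^\perp$. This yields a smooth ODE on $\mathrm{SO}(3)\times\mathbb{R}^3$. Since $\mathrm{SO}(3)$ is compact, the only possible blow-up is in $\beta$, and $|\dot\beta|$ is controlled by the bounded data. So the solution exists on all of $[0,T]$ and is smooth.

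\emph{Step 3: uniqueness.} Let $r_1,r_2$ be two flows with the stated properties. For each $t$, rigidity gives $r_1(t)=Q(t)r_2(t)+\beta(t)$. Smoothness of $Q,\beta$ in $t$ can be read off from the $L^2$ projection formulas, for instance by recovering $(Q,\beta)$ via least squares on $\Man$. Since $Q(0)=I$ and $\beta(0)=0$, continuity forces $Q(t)\in\mathrm{SO}(3)$. Differentiating,
\[
\partial_t r_1 - Q\partial_t r_2 = \dot Q Q^{-1}(r_1-\beta)+\dot\beta \in \RM[r_1].
\]
Both $\partial_t r_1$ and $Q\partial_t r_2$ are orthogonal to $\RM[r_1]$: the first by assumption, the second because the condition for $r_2$ transforms under the rigid motion into orthogonality to $\RM[r_1]$. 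So the difference, which lies in $\RM[r_1]$ and is orthogonal to it, vanishes. Hence $\dot Q Q^{-1}(r_1-\beta)+\dot\beta=0$ on $\Man$. Because $r_1(t)$ spans $\mathbb{R}^3$ affinely, this gives $\dot Q=0$ and $\dot\beta=0$, so $Q\equiv I$, $\beta\equiv0$, and $r_1=r_2$.
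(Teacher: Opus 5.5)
Your proposal is correct and follows essentially the same route as the paper's appendix proof. You take a smooth family of isometric embeddings (which the paper simply asserts from Weyl's theorem), compose it with a time-dependent rigid motion fixed by a linear ODE coming from \eqref{eq:ptr_orth_RM}, where your Gram-matrix invertibility plays the role of the paper's invertible inertia matrix $\int_{\Man}(|\hat r|^2 I-\hat r\hat r^{\top})\,\Vol_{\Man}$, and you prove uniqueness by showing that the rigid motion relating two solutions has zero time derivative. Your translation variable $\beta(t)$ replaces the paper's pre-centering, and your uniqueness step is slightly more careful: it justifies smoothness of $(Q,\beta)$ and does not need a zero-mean initial embedding. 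One caveat: in Step~1 the plain H\"older-space implicit function theorem really does lose a derivative (e.g.\ $\mathrm{d}r\odot\mathrm{d}(\phi n)=\phi\,\mathrm{d}r\odot\mathrm{d}r$ when $r$ is the unit sphere), so the Nash--Moser fallback you mention is genuinely needed, a point the paper sidesteps by citation.
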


\begin{remark}[Positive Gaussian curvature]
\label{rmk:posi_curv}
\upshape
The assumption of strictly positive Gaussian curvature plays a crucial role in establishing existence and uniqueness for Problems~\ref{Pro:iso-embd} and~\ref{Pro:dynamic}, as well as for the linearized equation~\eqref{eq:pt_isoemb}.
In particular, this assumption is satisfied in the following two typical situations.\medskip

\noindent\textit{(1) Conformal deformation.}
By the uniformization theorem, any metric \(g\) on \(S^2\) with positive Gaussian curvature is conformal to the standard metric \(g_0\coloneqq i_{S^2}^*\delta_{\mathbb{R}^3}\), i.e., there exists \(\lambda:S^2\to\mathbb{R}_+\) such that
\(
g = e^{2\lambda} g_0 .
\)
Define a conformal path \(g(t)\) for \(t\in[0,1]\) by
\[
g(t) = e^{2t\lambda}\, g_0 .
\]
Then the Gaussian curvature of \(g(t)\) remains strictly positive for all \(t\in[0,1]\).
Indeed, using the standard transformation law of Gaussian curvature under conformal changes, the Gaussian curvature \(\kappa(t)\) of \(g(t)\) satisfies
\[
\kappa(t) = e^{-2t\lambda}\bigl(\kappa_0 - t\,\Delta_{g_0}\lambda\bigr),
\]
where \(\kappa_0>0\) is the Gaussian curvature of \(g_0 \).
Since \(g(1)=e^{2\lambda}g_0\), we also have
\[
\kappa_1 = e^{-2\lambda}\bigl(\kappa_0-\Delta_{g_0}\lambda\bigr),
\]
and hence
\[
\kappa(t)
= e^{-2t\lambda}\bigl((1-t)\kappa_0 + t\,e^{2\lambda}\kappa_1\bigr) > 0,
\]
because \(\kappa_0>0\) and \(\kappa_1>0\).\medskip

\noindent\textit{(2) Intrinsic geometric flows.}
Another typical situation arises when \(g(t)\) evolves according to an intrinsic curvature flow.
For example, under the Ricci flow on a closed surface,
\[
\partial_t g(t) = -2\,\kappa(t)\, g(t),
\]
one can show that the Gaussian curvature \(\kappa(t)\) satisfies the parabolic evolution equation
\[
\partial_t \kappa(t) = \Delta_{g(t)} \kappa(t) + 2\,\kappa(t)^2,
\]
and hence remains positive for all time provided \(\kappa(0)>0\); see~\cite[Corollary~2.11]{chow2023hamilton}.
\end{remark}

Throughout this paper, we denote by $C$ and $h_{0}$ two generic positive constants which are different at different occurrences, possibly depending on the exact solution and time $T$, but are independent of the mesh size $h$ and $t\in[0,T]$. The notation $X \lesssim Y$ means $X \leq C Y$ for some constant $C$, and $X \eqsim Y$ means $X \lesssim Y$ and $Y \lesssim X$.

\section{Weak formulation and numerical scheme}\label{sec:num}

\subsection{New weak formulation}

In what follows, we propose a new variational formulation to determine the velocity \(v=\partial_t r\) satisfying~\eqref{eq:pt_isoemb} and~\eqref{eq:ptr_orth_RM}, and then use it to recover the embedding flow \(r(t)\) obtained in Theorem~\ref{thm:wely-emb}.
This variational formulation is well suited for the finite element discretization developed in the subsequent sections.

We first introduce the following notation. On the evolving surface
\(
\Gamma(t)=\{\,r(p,t):p\in\Man\,\},
\)
the unit normal and the orthogonal projection onto the tangent plane are given by
\[
n=\frac{\partial_1 r\times \partial_2 r}{\lvert \partial_1 r\times \partial_2 r\rvert},
\qquad
P=I-nn^{\top}.
\]
For a smooth embedding \(r:\Man\to\mathbb{R}^3\), we view the left-hand side of~\eqref{eq:pt_isoemb} as a linear operator with respect to \(v\), denoted by \({\rm D}_r\):
for any vector field \(v:\Man\to\mathbb{R}^3\), set
\begin{equation}\label{eq:def-Dr}
{\rm D}_r v \coloneqq \mathrm{d}r \odot \mathrm{d}v
= \tfrac12\big(\partial_i r\cdot \partial_j v+\partial_j r\cdot \partial_i v\big)\,
\mathrm{d}x^i\otimes \mathrm{d}x^j .
\end{equation}
Decomposing \(v\) into tangential and normal components yields
\begin{equation}\label{eq:TanH1_NorL2}
\begin{aligned}
{\rm D}_r v
&= {\rm D}_r(Pv) + \mathrm{d}r \odot \mathrm{d}\big((n\cdot v)n\big) \\
&= {\rm D}_r(Pv)
+ \tfrac12\Big(\partial_i r\cdot \partial_j\big((n\cdot v)n\big)
            +\partial_j r\cdot \partial_i\big((n\cdot v)n\big)\Big)\,
   \mathrm{d}x^i\otimes \mathrm{d}x^j \\
&= {\rm D}_r(Pv) - (\partial_{ij}r\cdot n)\,(n\cdot v)\,\mathrm{d}x^i\otimes \mathrm{d}x^j .
\end{aligned}
\end{equation}
Hence, \({\rm D}_r v\) is well defined in \(L^2\) whenever the normal component \(n\cdot v\in L^2\) and the tangential component \(Pv\in H^1\).
This motivates the Hilbert space
\[
H^1_T(\Man;\mathbb{R}^3)
\coloneqq\{\, v\in L^2(\Man;\mathbb{R}^3)\ \mid\ Pv\in H^1(\Man;\mathbb{R}^3)\,\}.
\]
Accordingly, \({\rm D}_r\) can be regarded as a bounded linear operator
\(
{\rm D}_r: H^1_T(\Man;\mathbb{R}^3)\to L^2S^0_2(\Man).
\)
Let
\[
\Ker({\rm D}_r)\coloneqq\{\, v\in H^1_T(\Man;\mathbb{R}^3)\ \mid\ {\rm D}_r v=0\,\}.
\]
Then the infinitesimal rigidity defined in ~\eqref{eq:inf-rigid} can be equivalently stated as
\begin{equation}\label{eq:rigid}
\RM[r]=\Ker({\rm D}_r).
\end{equation}

We are now ready to recast~\eqref{eq:isoemb}, \eqref{eq:pt_isoemb}, and~\eqref{eq:ptr_orth_RM} into a variational formulation.
Find \(r(t),v(t):\Man\to\mathbb{R}^3\) and \( \lambda(t)\in \RM[r(t)] \) such that
\begin{subequations}\label{eq:vel}
\begin{align}
\label{eq:vel-r}
\partial_t r &= v, \\
\label{eq:vel-a}
2\,({\rm D}_r v, {\rm D}_r q)_{\Man} + (\lambda, q)_{\Man}
&= (\partial_t g, {\rm D}_r q)_{\Man}, \\
\label{eq:vel-b}
(v,\mu)_{\Man} &= 0,
\end{align}
\end{subequations}
for all test functions \(q\in H^1_T(\Man;\mathbb{R}^3)\) and \(\mu\in \RM[r(t)]\).
It is straightforward to verify that the triple \((r,\partial_t r,0)\) satisfies~\eqref{eq:vel}, where \(r\) is the embedding flow provided by Theorem~\ref{thm:wely-emb}.

Equations~\eqref{eq:vel-a}-\eqref{eq:vel-b} form a linear system for the velocity \(v \in H^1_T(\Man;\mathbb{R}^3)\) and the multiplier \(\lambda \in \RM[r]\), where \( \RM[r] \) is a finite-dimensional linear space (\(\dim \RM[r] = 6\)), and all norms on this space are equivalent. Therefore, based on the abstract theory in \cite{boffi2013mixed}, the well-posedness of this linear system hinges on a Korn inequality on the manifold, as stated in Theorem~\ref{lm:Korn}.
Since we were unaware of a Korn inequality of this form in the existing literature, we provide a proof of it in~\ref{appendix:Korn}. The proof builds upon the study of the linearized equation for isometric embeddings in \cite[Lemma 9.2.2]{han2006isometric}.
\begin{theorem}\label{lm:Korn} 
    Let \(r:\Man \rightarrow \mathbb{R}^3\) be a smooth embedding of \(g\) with positive Gaussian curvature. Then
\begin{equation}\label{eq:korn} 
\|v\|_{L^2(\Man)} + \| P v \|_{H^1(\Man)} \lesssim \| {\rm D}_r v \|_{L^2(\Man)},
\quad \forall v \in (\RM[r])^{\perp},
\end{equation} 
where 
\[
(\RM[r])^{\perp} 
= \{ w \in H^1_{T}(\Man; \mathbb{R}^3) \mid (w, \mu)_{\Man} = 0, \forall \mu \in \RM[r] \}.
\]
\end{theorem}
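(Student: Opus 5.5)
The plan is to prove the Korn inequality \eqref{eq:korn} in two steps. First I establish a G\aa rding-type inequality with a compact lower-order remainder. Then I remove the remainder by a compactness--uniqueness argument based on the infinitesimal rigidity \eqref{eq:rigid}.

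Write $v = w + \phi n$ with $w = Pv$ tangential and $\phi = n\cdot v$. Identify $w$ with a tangent vector field $W$ on $\Man$ through $w = \mathrm{d}r(W)$. By \eqref{eq:TanH1_NorL2},
\[
{\rm D}_r v = \tfrac12 \mathcal{L}_W g - \phi\, h, \qquad h_{ij} = \partial_{ij} r\cdot n,
\]
where $h$ is the second fundamental form and $\tfrac12\mathcal{L}_W g$ is the intrinsic deformation tensor of $W$. Positive Gaussian curvature means $h$ is definite; orient $n$ so that $h$ is positive definite. Then $\det_g h = \kappa > 0$ and $\mathrm{tr}_g h = 2H > 0$.

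\textbf{Step 1 (eliminate $\phi$).} Take a trace-type contraction that isolates $\phi$ and gives
\[
\|\phi\|_{L^2} \lesssim \|{\rm D}_r v\|_{L^2} + \|\mathcal{L}_W g\|_{L^2}.
\]
Substituting this back is circular, so instead use the adjugate $\tilde h = 2H g - h$ of $h$. This tensor satisfies $\langle \tilde h, h\rangle_g = 2\kappa$ and is also positive definite. Apply the first-order operator $A(W) := \tfrac12\mathcal{L}_W g - \tfrac{\langle \tilde h,\frac12\mathcal{L}_W g\rangle_g}{2\kappa}\, h$, which is the part of ${\rm D}_r v$ orthogonal to $\tilde h$ (only up to the $\phi$ term; see below). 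The claim to verify is that $A$ is elliptic in the Agmon--Douglis--Nirenberg sense on tangent fields. Its symbol at a covector $\xi$ is
\[
\sigma(\xi)W = \xi\odot W^\flat - \frac{\langle\tilde h,\xi\odot W^\flat\rangle}{2\kappa}\, h.
\]
If $\sigma(\xi)W = 0$, then $\xi\odot W^\flat$ is proportional to $h$. But $\xi\odot W^\flat$ has rank at most two with nonpositive determinant, while $h$ is definite, so the constant must be $0$, forcing $W = 0$. This is exactly the mechanism behind \cite[Lemma 9.2.2]{han2006isometric}: the linearized isometric-embedding system is elliptic precisely when $\kappa>0$.

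Ellipticity gives
\[
\|W\|_{H^1} \lesssim \|A W\|_{L^2} + \|W\|_{L^2}.
\]
The quantity $AW$ equals ${\rm D}_r v$ minus a multiple of $h$. That multiple is determined by $\langle \tilde h, {\rm D}_r v\rangle_g$ and $\phi$, and the $\phi h$ component drops out because $\langle\tilde h, h\rangle_g$ is exactly compensated. Hence $\|AW\|_{L^2}\lesssim \|{\rm D}_r v\|_{L^2}$. Solving ${\rm D}_r v = \tfrac12\mathcal{L}_W g - \phi h$ algebraically for $\phi$, by contracting with $\tilde h$, then gives
\[
\|\phi\|_{L^2} \lesssim \|{\rm D}_r v\|_{L^2} + \|W\|_{H^1}.
\]
Combining,
\[
\|v\|_{L^2} + \|Pv\|_{H^1} \lesssim \|{\rm D}_r v\|_{L^2} + \|Pv\|_{L^2}.
\]
The norms of $W$ and $w$ are equivalent because $r$ is smooth, and $\|Pv\|_{H^1}$ differs from $\|W\|_{H^1}$ only by terms of order $\|v\|_{L^2}$.

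\textbf{Step 2 (compactness).} Suppose \eqref{eq:korn} fails. Then there exist $v_k\in(\RM[r])^\perp$ with $\|v_k\|_{L^2}+\|Pv_k\|_{H^1}=1$ and ${\rm D}_r v_k\to 0$. By Rellich, $Pv_k$ converges strongly in $L^2$ along a subsequence. Step 1 applied to differences then shows $v_k$ is Cauchy in $H^1_T$, with limit $v$. The limit satisfies ${\rm D}_r v=0$, so $v\in\RM[r]$ by \eqref{eq:rigid}. Since $(\RM[r])^\perp$ is closed, also $v\in(\RM[r])^\perp$, hence $v=0$. This contradicts the normalization.

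The main obstacle is Step 1: proving the ellipticity, or equivalently the G\aa rding estimate, with only $L^2$ control of $\phi$. A second point needs care. Infinitesimal rigidity \eqref{eq:rigid} is known for smooth solutions, but here it must hold for ${\rm D}_r v = 0$ with $v\in H^1_T$. I plan to obtain it by elliptic regularity of the system in Step 1: $W$ solves $AW = 0$ and is therefore smooth, and then $\phi$ is smooth as well.
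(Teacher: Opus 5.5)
Your proof is correct, but it takes a genuinely different route from the paper's.

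\textbf{The paper's route.} The paper argues constructively, following \cite[Lemma~9.2.2]{han2006isometric}. For smooth $v$ and $q={\rm D}_r v$, it builds a particular solution $\tilde v$ of $\mathrm{d}r\odot\mathrm{d}\tilde v=q$. It does so through the auxiliary variables $u_i=n\cdot\partial_i\tilde v$ and the rotation variable $w$, where $w$ solves a scalar second-order elliptic equation $\mathcal{L}w=\dots$ with $\Ker(\mathcal{L})=\{\alpha\cdot n\}$. It then bounds $\|w\|_{L^2}$ by duality, using an $H^{-2}$ bound on the right-hand side since $\gamma$ contains first derivatives of $q$. It bounds $\|\tilde v\|_{L^2}$ through $\|\mathrm{d}\tilde v\|_{H^{-1}}$. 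This gives $\|\tilde v\|_{L^2}+\|P\tilde v\|_{H^1}\lesssim\|q\|_{L^2}$. The proof concludes from $v-\tilde v\in\RM[r]$, the orthogonality of $v$, and density. So the paper uses both solvability of the linearized equation and infinitesimal rigidity, and it obtains the constant directly, with no contradiction argument.

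\textbf{Your route.} You use only rigidity, never solvability. Projecting along $h$ turns ${\rm D}_r v$ into the first-order system $AW={\rm D}_r v-\frac{\langle\tilde h,{\rm D}_r v\rangle_g}{2\kappa}h$ on the tangential part. Its ellipticity is exactly the observation that the definite tensor $h$ is never of the indefinite form $\xi\odot W^\flat$, which makes the role of $\kappa>0$ visible at the symbol level. A G\aa rding inequality plus the Peetre--Tartar compactness argument then removes the $\|Pv\|_{L^2}$ remainder. This avoids the Han--Hong identities, second-order operators and negative norms altogether. Your elliptic-regularity remark also justifies $\Ker({\rm D}_r)=\RM[r]$ on all of $H^1_T$. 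The paper asserts this in \eqref{eq:rigid} and sidesteps it in the appendix by density. The price is a constant obtained by contradiction, hence non-constructive, and no solvability statement as a by-product.

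\textbf{Two cosmetic points.} First, any $\tilde h$ with $\langle\tilde h,h\rangle_g\neq0$ works, e.g.\ $\tilde h=g$ since $\langle g,h\rangle_g=2H$; the adjugate is not essential. Second, the opening ``trace-type contraction'' sentence of your Step~1 can be dropped, because the identity for $AW$ above is all you need.
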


We are now ready to establish the well-posedness of linear system \eqref{eq:vel-a} and~\eqref{eq:vel-b}.
\begin{theorem}\label{thm:wellpose}
Let \(r:\Man \rightarrow \mathbb{R}^3\) be a smooth embedding of $g$ with positive Gaussian curvature. Then, the system to find $v \in H^1_{T}(\Man; \mathbb{R}^3)$ and $\lambda \in \RM[r]$ such that for any  $q \in  H^1_{T}(\Man; \mathbb{R}^3)$ and $\mu \in \RM[r]$
\begin{subequations}\label{Mix:system}
\begin{align}
2\,({\rm D}_r v, {\rm D}_r q)_{\Man}
+ (\lambda, q)_{\Man} &= (\partial_t g, {\rm D}_r q)_{\Man}, \\
(v, \mu)_{\Man} &= 0,
\end{align}
\end{subequations}
admits a unique solution.
\end{theorem}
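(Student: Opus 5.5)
We will verify the hypotheses of the abstract Babuška--Brezzi theory for saddle point problems \cite{boffi2013mixed}. Set $V\coloneqq H^1_T(\Man;\mathbb{R}^3)$ with norm $\|q\|_V\coloneqq\|q\|_{L^2(\Man)}+\|Pq\|_{H^1(\Man)}$, and $Q\coloneqq\RM[r]$ with the $L^2$ norm. Define
\[
a(v,q)\coloneqq 2({\rm D}_r v,{\rm D}_r q)_\Man,\qquad b(q,\mu)\coloneqq(q,\mu)_\Man,\qquad F(q)\coloneqq(\partial_t g,{\rm D}_r q)_\Man .
\]
By \eqref{eq:TanH1_NorL2}, ${\rm D}_r:V\to L^2S^0_2(\Man)$ is bounded, since $r$ is smooth and hence $\partial_{ij}r\cdot n$ is bounded. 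Therefore $a$ is bounded on $V\times V$, and $F$ is a bounded functional because $\partial_t g\in L^2$. The form $b$ is bounded on $V\times Q$ by Cauchy--Schwarz.

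\emph{Inf-sup condition.} For $\mu\in\RM[r]$ we take $q=\mu$. Since $r$ is smooth, $\mu=\alpha\times r+\beta$ is smooth, so $\mu\in V$. Moreover $\dim\RM[r]=6$ and all norms on it are equivalent, so $\|\mu\|_V\lesssim\|\mu\|_{L^2}$. Hence
\[
\sup_{q\in V}\frac{b(q,\mu)}{\|q\|_V}\ge \frac{\|\mu\|_{L^2}^2}{\|\mu\|_V}\gtrsim\|\mu\|_{L^2}.
\]

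\emph{Coercivity on the kernel.} The kernel of $b$ is $\{v\in V:(v,\mu)_\Man=0\ \forall\mu\in\RM[r]\}=(\RM[r])^\perp$. For $v$ in this set, Theorem~\ref{lm:Korn} gives
\[
a(v,v)=2\|{\rm D}_r v\|_{L^2}^2\gtrsim\|v\|_V^2 .
\]
This is the only nontrivial input, and it is supplied by Theorem~\ref{lm:Korn}, so the main obstacle is already resolved. With boundedness, the inf-sup condition and kernel coercivity in hand, Brezzi's theorem yields existence and uniqueness of $(v,\lambda)$, together with the stability estimate $\|v\|_V+\|\lambda\|_{L^2}\lesssim\|\partial_t g\|_{L^2}$.

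As a consistency check, which also gives uniqueness directly, suppose $(v,\lambda)$ solves the homogeneous problem. Testing with $q=\lambda\in\RM[r]=\Ker({\rm D}_r)$ (by \eqref{eq:rigid}) gives $\|\lambda\|_{L^2}^2=0$. Then testing with $q=v$ gives ${\rm D}_r v=0$, so $v\in\RM[r]$; combined with $v\perp\RM[r]$ this forces $v=0$.

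One point to check carefully is that $V$ is a Hilbert space (closedness of $\{Pv\in H^1\}$ under the graph norm), which follows from the smoothness of $P$.
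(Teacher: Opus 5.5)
Your proposal is correct and matches the paper's proof: Brezzi's saddle-point theory, with coercivity on the kernel $(\RM[r])^\perp$ supplied by the Korn inequality (Theorem~\ref{lm:Korn}), and the inf-sup condition obtained by taking $q=\mu\in\RM[r]$ and using norm equivalence on the six-dimensional space $\RM[r]$. The extra checks you include are consistent with the paper's argument but go beyond what it writes out: boundedness of ${\rm D}_r$ via \eqref{eq:TanH1_NorL2}, the direct uniqueness argument, and the Hilbert-space remark.
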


\begin{proof}
We prove the well-posedness of system using the abstract theory in \cite{boffi2013mixed}.
To this end, we define the bilinear forms:
\begin{align*}
a(u, q) &= 2({\rm D}_ru, {\rm D}_rq)_{\Man}, 
\quad && \forall u, q \in H^1_T(\Man; \mathbb{R}^3)
\\
b(q, \lambda) &= (q,\lambda)_{\Man}, \quad && \forall q \in H^1_T(\Man; \mathbb{R}^3),
\text{ and } \lambda \in \RM[r]. 
\end{align*}
It follows from Korn's inequality~\eqref{eq:korn} that
\[
a(q, q) = 2 \| {\rm D}_rq \|_{L^2(\Man)}^2 \gtrsim \| q \|_{L^2(\Man)}^2 
+ \|P q\|_{H^1(\Man)}^2
, \quad \forall q \in (\RM[r])^\perp.
\]
Next, for any \( \lambda \in \RM[r] \), we take 
\( q = \lambda \in H^1_T(\Man; \mathbb{R}^3) \). 
By the equivalence of norms in finite-dimensional spaces 
(noting that \( \dim \RM[r] = 6 \)), we obtain
\[
    b(q, \lambda) = \|\lambda\|_{L^2(\Man)}^2 
    \quad \text{and} \quad
    \|q\|_{L^2(\Man)} + \|Pq\|_{H^1(\Man)}
    \eqsim \|q\|_{L^2(\Man)}
    = \|\lambda\|_{L^2(\Man)} .
\]
In particular, this shows that
\[
    \inf_{\lambda \in \RM[r]} 
    \sup_{q \in H^1_{T}(\Man; \mathbb{R}^3)}
    \frac{
        b(q, \lambda)
    }{
        \|\lambda\|_{L^2(\Man)}
        \bigl(\|q\|_{L^2(\Man)} + \|Pq\|_{H^1(\Man)}\bigr)
    }
    \gtrsim 1,
\]
that is, the inf-sup stability of \( b(\cdot, \cdot) \) is ensured.

Therefore, by the abstract theory in \cite{boffi2013mixed}, the saddle-point system \eqref{Mix:system} admits a unique solution \((v,\lambda)\) and satisfies the stability estimate
\[
 \|v\|_{L^2(\Man)} + \|P v\|_{H^1(\Man)} + \|\lambda\|_{L^2(\Man)} \,\lesssim\, \|\partial_t g \|_{L^2(\Man)},
\]
where the hidden constant depends continuously on the smooth embedding \(r\).
\end{proof}

\smallskip

\begin{remark} [Evolution equation for isometric embedding]
    \upshape
    Let \({\rm F}(t, r, \mathrm{d}r)\) denote the velocity uniquely determined by the linear system \eqref{eq:vel-a}–\eqref{eq:vel-b}. Then the formulation \eqref{eq:vel} can be recast as the evolution equation: Find \(r:[0,T]\times\Man\to\mathbb{R}^3\) such that
  \[\partial_t r(t) = {\rm F}(t, r(t), \mathrm{d}r(t)).\] 
\end{remark}

\begin{remark} [Graph norm on $H^1_T(\Man; \mathbb{R}^3)$]
    \upshape 
    We define a graph norm associated with the operator \({\rm D}_r\) in \eqref{eq:def-Dr}:
    \begin{equation}\label{eq:graph_norm}
        \normt{v}_{\Man}^2
        \coloneqq \|v\|_{L^2(\Man)}^2 + \| {\rm D}_r v \|_{L^2(\Man)}^2,
        \qquad \forall\, v \in H^1_{T}(\Man; \mathbb{R}^3).
    \end{equation}
    We decompose \(v \in H^1_{T}(\Man; \mathbb{R}^3)\) orthogonally as \(v_{\RM} \in \RM[r]\) and \(v_{\perp} \in (\RM[r])^{\perp}\); then the Korn inequality \eqref{eq:korn}, the norm equivalence in the finite-dimensional space \(\RM[r]\), and \eqref{eq:rigid} lead to 
    \[
    \begin{aligned}
        \| P v_{\perp} \|_{H^1(\Man)} + \| v_{\perp} \|_{L^2(\Man)}
        &\lesssim \| {\rm D}_r v_{\perp} \|_{L^2(\Man)} =  \| {\rm D}_r v \|_{L^2(\Man)}, \\ 
        \| P v_{\RM} \|_{H^1(\Man)} + \| v_{\RM} \|_{L^2(\Man)} 
        &\eqsim \| v_{\RM} \|_{L^2(\Man)} \le \| v \|_{L^2(\Man)}. 
    \end{aligned}
    \]
    This implies that \( \| v \|_{L^2(\Man)} + \|Pv\|_{H^1(\Man)} \lesssim  \normt{ v }_{\Man} \).
    Conversely, by the identity in \eqref{eq:TanH1_NorL2} we obtain
    \[
    \normt{ v }_{\Man} \lesssim \| v \|_{L^2(\Man)} + \|Pv\|_{H^1(\Man)}.
    \]
  Therefore, the graph norm in \eqref{eq:graph_norm} defines an equivalent norm on the Hilbert space \(H^1_{T}(\Man; \mathbb{R}^3)\).
In the following numerical analysis, we will use the graph norm in \eqref{eq:graph_norm} and its discrete version, defined in \eqref{eq:dis_graph_norm}, for error analysis.    
\end{remark}

\subsection{Finite element method}

For the purpose of discretization, we regard the manifold $\Man$ as an closed and oriented Euclidean hypersurface in $\mathbb{R}^3$, which allows us to define the \emph{signed distance function} $\operatorname{dist}(\cdot,\Man): \mathbb{R}^3 \rightarrow \mathbb{R}$.  This function is defined such that $|\operatorname{dist}(p,\Man)|$ represents the Euclidean distance from $p$ to $\Man$, and $\operatorname{dist}(\cdot,\Man)$ is negative inside the bounded region enclosed by $\Man$ and positive outside in the unbounded region.
Let \(\Man_h\) be a quasi-uniform, piecewise-flat triangulated surface with vertices on \(\Man\) and mesh size \(h\), assuming that
\(\Man_h \subset D_{\epsilon}(\Man)=\{\,p\in\mathbb R^{3}\mid |\operatorname{dist}(p,\Man)|\le \epsilon\,\}\) for some small \(\epsilon>0\).
Then there exists a bijection \(a:\Man_h\to\Man\), called the \emph{closest point projection}, determined by
\[
a(p) = p - \operatorname{dist}(p,\Man)\, n_{\Man}(a(p)),
\]
where \(n_{\Man}:\Man\to\mathbb R^{3}\) denotes the outer unit normal vector on \(\Man\).
 Via the closest point projection, any function \(f_h: \Man_h \to \mathbb{R}\) can be lifted to the smooth surface \(\Man\), and, conversely, any function \(f: \Man \to \mathbb{R}\) can be lifted back to the discrete surface \(\Man_h\), namely
\[
f_h^{(\ell)} = f_h \circ a^{-1}: \Man \to \mathbb{R},
\qquad
f^{(-\ell)} = f \circ a: \Man_h \to \mathbb{R}.
\]
Similarly, for a tensor field \(\sigma\) on \(\Man\) and a (piecewise) tensor field \(\sigma_h\) on \(\Man_h\), we set
\[
\sigma_h^{(\ell)} = (a^{-1})^* \sigma_h,
\qquad
\sigma^{(-\ell)} = a^* \sigma,
\]
where \(a^*\) and \((a^{-1})^*\) denote the pullback operators.

We denote by $\mathcal{T}_h$ the set of triangles in $\Man_h$ and by $\mathcal{E}_h$ the set of its edges.
Let \(V_h\) be the Lagrange finite element space of polynomial degree \(k\ge 1\) on \(\Man_h\):
\[
V_h = \{\, v \in C(\Man_h) \mid v|_K \in P_k(K) \text{ for all } K \in \mathcal{T}_h  \,\}.
\]
The finite element space extends naturally to the vector-valued space \(V_h^3\), consisting of three components. For a given nonzero function \(r_h\in V_h^3\), we define the associated infinitesimal rigid motion space by
\[
\RM[r_h] = \{\, \alpha \times r_h + \beta \mid \alpha, \beta \in \mathbb{R}^3 \,\} \subset V_h^3,
\qquad \dim \RM[r_h] = 6.
\]

On a triangle \(K\subset\Man_h\), let \(S_2^0(K)\) denote the space of all symmetric \((0,2)\)-tensor fields on \(K\).
We say that \(\sigma\in H^1 S_2^0(K)\) if each component of its coordinate representation belongs to \(H^1(K)\).
We define \(\Sigma\) by
\[
\Sigma = \Big\{\, \sigma \in \prod_{K\in\mathcal T_h} H^1 S^0_2(K) \ \Big|\ 
i_{K_+,e}^*(\sigma|_{K_+}) = i_{K_-,e}^*(\sigma|_{K_-})
\ \text{for all } e=K_+\cap K_- \in \mathcal E_h \,\Big\},
\]
where \(i_{K_{\pm},e}: e \hookrightarrow K_{\pm}\) denotes the inclusion and \(i_{K_{\pm},e}^*\) its pullback.
Thus, \(\Sigma\) is the space of piecewise symmetric \((0,2)\)-tensor fields on \(\Man_h\) whose tangential--tangential components are continuous across edges.
For example, since the closest point projection \(a:\Man_h\to\Man\) is continuous, we have
\(a\circ i_{K_+,e}=a\circ i_{K_-,e}\) for all edges \(e=K_+\cap K_-\in\mathcal E_h\).
Consequently, any smooth tensor field \(\sigma\in S_2^0(\Man)\) admits the pullback \(\sigma^{(-\ell)}=a^*\sigma\), which belongs to \(\Sigma\) because
\[
i_{K_+,e}^* \sigma^{(-\ell)} = (a \circ i_{K_+,e})^* \sigma
= (a \circ i_{K_-,e})^* \sigma = i_{K_-,e}^* \sigma^{(-\ell)}.
\]
In particular, the pullback of the target metric satisfies \(g^{(-\ell)}(t)=a^*g(t)\in\Sigma\), and its time derivative satisfies
\(\partial_t g^{(-\ell)}(t)=a^*(\partial_t g(t))\in\Sigma\). 
To obtain a high-order discretization of the metric tensor, we employ the Regge finite element space \cite{christiansen2004characterization,li2018regge} of degree \(k_g \ge 0\), defined by
\[
\Sigma_h = \big\{\, \sigma \in \Sigma \ \big|\ \sigma|_K \in P_{k_g} S_2^0(K) \text{ for all } K \in \mathcal{T}_h \,\big\},
\]
where \(P_{k_g}S_2^0(K)\) denotes the space of symmetric \((0,2)\)-tensor fields on \(K\) whose components are polynomials of degree at most \(k_g\).
We refer to \cite[Chapter~2]{li2018regge} for details on the shape function space, degrees of freedom, and unisolvence of $\Sigma_h$.  
Regge finite elements provide a high-order approximation of smooth tensor fields \(\sigma\in S_2^0(\Man)\).
Given \(\sigma\), we first pull it back to \(\Man_h\) via the closest point projection, i.e.,\(\sigma^{(-\ell)} = a^*\sigma \in \Sigma\).
We then evaluate the degrees of freedom of \(\Sigma_h\) on \(\sigma^{(-\ell)}\) to obtain the Regge approximation \(R_h\sigma^{(-\ell)} \in \Sigma_h\).
The operator \(R_h:\Sigma\to\Sigma_h\) is the \emph{canonical Regge interpolation} introduced in \cite[Section~2.3.2]{li2018regge}.
It satisfies a quasi-optimal approximation estimate for (piecewise) smooth tensor fields; see \cite[Theorem~2.5]{li2018regge}.
In what follows, we approximate the two metric tensors appearing in the variational formulation \eqref{eq:vel} using Regge finite elements.

\textit{(1) The induced metric \(g_{\Man}\)}:
The induced metric \(g_{\Man}\) is approximated by its Regge interpolant, defined by
\(g_{\Man_h} \coloneqq R_h(a^* g_{\Man})\).
By the approximation property of \(R_h\), for \(h\) sufficiently small,
\(g_{\Man_h}\in\Sigma_h\) defines a discrete Riemannian metric on \(\Man_h\). 
Accordingly, we write \(\Vol_{g_{\Man_h}}\) for the volume form on \(\Man_h\) associated with \(g_{\Man_h}\). For tensor fields \(\sigma,\omega\) on \(\Man_h\) and vector fields \(u,v: \Man_h\to\mathbb R^3\), we set
\begin{equation}\label{eq:inner_Mh} 
(u,v)_{\Man_h} = \int_{\Man_h} (u\cdot v)\,\Vol_{g_{\Man_h}},
\qquad
(\sigma,\omega)_{\Man_h} = \int_{\Man_h} \langle \sigma,\omega\rangle_{g_{\Man_h}}\,\Vol_{g_{\Man_h}},
\end{equation} 
where \(\langle \sigma,\omega\rangle_{g_{\Man_h}}\) is the inner product induced by \(g_{\Man_h}\).
We denote by \(L^p(\Man_h)\), \(W^{1,p}(\Man_h)\) \((1\le p\le\infty)\), and \(H^1(\Man_h)\) the Sobolev spaces on \(\Man_h\) with respect to the metric \(g_{\Man_h}\).
The approximation property of \(R_h\) in \cite[Theorem~2.5]{li2018regge} yields
\begin{equation}\label{eq:gMh_app}
\|g_{\Man_h}-a^*g_{\Man}\|_{L^p(\Man_h)} = 
\|R_h(a^*g_{\Man})-a^*g_{\Man}\|_{L^p(\Man_h)}
\lesssim h^{k_g+1}, \qquad \forall\, 1\le p\le\infty .
\end{equation}
This implies a high-order geometric consistency estimate between 
$(\Man,g_{\Man})$ and $(\Man_h,g_{\Man_h})$:
\begin{equation}\label{eq:geo_err_vector}
\big|(u,v)_{\Man_h} - (u^{(\ell)},v^{(\ell)})_{\Man}\big|
\lesssim h^{k_g+1}\,\|u\|_{L^2(\Man_h)}\,\|v\|_{L^2(\Man_h)}.
\end{equation}
Indeed, denote by $\Vol_{a^*g_{\Man}}$ the volume form on $\Man_h$ induced by the 
pullback metric $a^*g_{\Man}$. Then we have
\[
\begin{aligned}
    (u,v)_{\Man_h} - (u^{(\ell)},v^{(\ell)})_{\Man}
    &= \int_{\Man_h} u v\,\Vol_{g_{\Man_h}}
     - \int_{\Man_h} u v\,\Vol_{a^*g_{\Man}} \\
    &\lesssim \|u\|_{L^2(\Man_h)}\,\|v\|_{L^2(\Man_h)}\,
        \|\Vol_{g_{\Man_h}} - \Vol_{a^*g_{\Man}}\|_{L^{\infty}(\Man_h)} \\
    &\lesssim \|u\|_{L^2(\Man_h)}\,\|v\|_{L^2(\Man_h)}\,
        \|g_{\Man_h} - a^*g_{\Man}\|_{L^{\infty}(\Man_h)},
\end{aligned}
\]
which, combined with~\eqref{eq:gMh_app}, proves~\eqref{eq:geo_err_vector}.
By an analogous argument, we obtain the estimates 
\begin{subequations}\label{eq:geo_err_du_tensor}
\begin{align}
\label{eq:geo_err_du} 
\bigl|(\mathrm{d}u,\mathrm{d}v)_{\Man_h} 
 - (\mathrm{d}u^{(\ell)},\mathrm{d}v^{(\ell)})_{\Man}\bigr|
&\lesssim h^{k_g+1}\,\|\mathrm{d}u\|_{L^2(\Man_h)}\,\|\mathrm{d}v\|_{L^2(\Man_h)}, \\
\label{eq:geo_err_tensor}
\bigl|(\sigma,\omega)_{\Man_h} 
 - (\sigma^{(\ell)},\omega^{(\ell)})_{\Man}\bigr|
&\lesssim h^{k_g+1}\,\|\sigma\|_{L^2(\Man_h)}\,\|\omega\|_{L^2(\Man_h)}.
\end{align}
\end{subequations}
Hence, for sufficiently small $h$, by combining \eqref{eq:geo_err_du_tensor} 
and \eqref{eq:geo_err_vector} with $u=v$ and $\omega=\sigma$, 
we derive the following norm equivalences:
\begin{equation}\label{eq:norm-equi} 
\|v\|_{H^{l}(\Man_h)} 
\eqsim 
\|v^{(\ell)}\|_{H^{l}(\Man)}, 
\qquad 
\|\sigma\|_{L^2(\Man_h)}
\eqsim 
\|\sigma^{(\ell)}\|_{L^2(\Man)}, 
\qquad 
l \in \{0,1\},
\end{equation}
for all $v \in H^{l}(\Man_h)$ and $\sigma \in L^2 S_{2}^{0}(\Man_h)$.

\textit{(2) The target metric \(g(t)\)}:
We also approximate the time-dependent metric \(g(t)\) that appears on the right-hand side of \eqref{eq:isoemb}.
Its pullback \(g^{(-\ell)}(t)\coloneqq a^*g(t)\) satisfies the identities
\begin{equation}\label{eq:g_ell}
g^{(-\ell)} = a^*(\mathrm{d}r\odot \mathrm{d}r)
= \mathrm{d}r^{(-\ell)}\odot \mathrm{d}r^{(-\ell)} \in \Sigma,
\quad\text{and}\quad
\partial_t \, g^{(-\ell)} = 2\,\mathrm{d}r^{(-\ell)}\odot \mathrm{d}\partial_t r^{(-\ell)} \in \Sigma,
\end{equation}
where \(r\) is the smooth solution of \eqref{eq:isoemb}. Here we use that the pullback commutes with the exterior derivative and with the symmetric tensor product \cite{johnlee}.
Let \(g_h(t)\in\Sigma_h\) be a Regge finite element approximation of the target metric \(g(t)\), and assume that, uniformly for \(t\in[0,T]\), it satisfies 
\begin{equation}\label{eq:gh_app} 
    \|g_h(t) - g^{(-\ell)}(t)\|_{L^2(\Man_h)} \lesssim h^k, \qquad \|\partial_t g_h(t) - \partial_t g^{(-\ell)}(t)\|_{L^2(\Man_h)} \lesssim h^k. 
\end{equation}
One admissible choice is to take \(g_h(t)\) as the Regge interpolant of \(g^{(-\ell)}(t)\), namely \(g_h(t)=R_h g^{(-\ell)}(t)\in\Sigma_h\), in which case the approximation property in Theorem~2.5 of \cite{li2018regge} implies that \eqref{eq:gh_app} holds provided that \(k_g+1\ge k\) and \(g(t)\) is sufficiently smooth.
Alternatively, \(g_h(t)\) can be chosen as the numerical solution produced by a finite element discretization of an intrinsic curvature flow, for example, by the Ricci flow scheme proposed in \cite{gawlik2019finite, gao2025ricci}. In this case, the corresponding error estimate (see Theorem~3.1 in \cite{gao2025ricci}) yields \eqref{eq:gh_app}.

Hence, two Regge finite element approximations are employed in the numerical scheme.
The first is \(g_{\Man_h}\), a fixed reference metric on \(\Man_h\); all finite element integrals are taken with respect to \(g_{\Man_h}\).
The second is an evolving Regge metric \(g_h(t)\), which approximates the traget metric \(g(t)\) on the right-hand side of \eqref{eq:isoemb} and satisfies \eqref{eq:gh_app}. 
We are now ready to propose the semi-discrete finite element discretization of \eqref{eq:vel}: find \(r_h(t), v_h(t)\in V_h^3\) and \(\lambda_h(t)\in \RM[r_h(t)]\) satisfying 
\begin{subequations}\label{eq:num}  
\begin{align}
    \hspace{-28mm}
\label{eq:num-v}    
\partial_t r_h &= v_h, \\    
\label{eq:num-a} 
2(\mathrm{d} r_h \odot \mathrm{d} v_h, \mathrm{d} r_h \odot \mathrm{d} q_h)_{\Man_h} 
+ (\lambda_h, q_h)_{\Man_h}  &= 
(\partial_t g_h, \mathrm{d} r_h \odot \mathrm{d} q_h)_{\Man_h}, \\ 
\label{eq:num-b} 
(v_h, \mu_h)_{\Man_h} &= 0,
\end{align}
\end{subequations}
for all test functions $q_h \in V_h^3$ and $\mu_h \in \RM[r_h(t)]$.

\subsection{Well-posedness and error estimates}

The main theoretical results of this paper are the wellposedness and the convergence for the scheme \eqref{eq:num}, which is summarized in the following theorem.

\begin{theorem}\label{thm:err}
Under the assumptions of Theorem~\ref{thm:wely-emb}, let \(r(t)\) be the smooth solution obtained therein.
Assume that the polynomial degree satisfies \(k=k_g\ge 5\).
Then there exists \(h_0>0\) such that, for all \(0<h\le h_0\), the numerical scheme~\eqref{eq:num} admits a unique solution
\(\bigl(r_h(t),\partial_t r_h(t),\lambda_h(t)\bigr)\) with \(\lambda_h(t)\equiv 0\), which exists uniquely on \([0,T]\) and satisfies
\[
r_h \in C^1([0,T];V_h^3).
\]
Moreover, the following error estimate holds:
\[
\normt{r_h^{(\ell)}(t)-r(t)}_{\Man}
=\Big(\|r_h^{(\ell)}(t)-r(t)\|_{L^2(\Man)}^2
+\|{\rm D}_r\bigl(r_h^{(\ell)}(t)-r(t)\bigr)\|_{L^2(\Man)}^2\Big)^{1/2}
\;\lesssim\; h^{k},
\]
for all \(t\in[0,T]\).
\end{theorem}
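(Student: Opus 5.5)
The plan is a bootstrap (continuation) argument combined with an energy estimate for the error equation. The key auxiliary tool is a discrete Korn inequality.

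\textbf{Step 1 (discrete Korn and local solvability).} First I prove the discrete analogue of Theorem~\ref{lm:Korn}: if $r_h\in V_h^3$ satisfies $\|r_h^{(\ell)}-r\|_{W^{1,\infty}(\Man)}\le \epsilon$ with $\epsilon$ small, then
\[
\|v_h\|_{L^2(\Man_h)}+\|P_hv_h\|_{H^1(\Man_h)}\lesssim \|\mathrm{d}r_h\odot \mathrm{d}v_h\|_{L^2(\Man_h)}
\]
for all $v_h\in V_h^3$ that are $(\cdot,\cdot)_{\Man_h}$-orthogonal to $\RM[r_h]$. The proof goes by lifting $v_h$ to $\Man$ and comparing ${\rm D}_{r_h^{(\ell)}}$ with ${\rm D}_r$. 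The perturbation is $\mathrm{d}(r_h^{(\ell)}-r)\odot \mathrm{d}v_h^{(\ell)}$, but it contains $\mathrm{d}(n\cdot v_h)$, which is not controlled by the left-hand side. I would absorb it by an inverse inequality, $\|\mathrm{d}v_h\|_{L^2}\lesssim h^{-1}\|v_h\|_{L^2}$. This requires $\|r_h^{(\ell)}-r\|_{W^{1,\infty}}=o(h)$ rather than merely $\epsilon$. The geometric errors \eqref{eq:geo_err_vector}--\eqref{eq:geo_err_tensor} and the mismatch between $\RM[r_h]$ and $\RM[r]$ are handled in the same way.

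With discrete Korn available, the saddle-point argument of Theorem~\ref{thm:wellpose} applies verbatim on $V_h^3\times\RM[r_h]$. Hence $v_h={\rm F}_h(t,r_h)$ is well defined and locally Lipschitz in $r_h$ (finite dimensions), and Picard--Lindel\"of gives a unique $C^1$ solution on a maximal interval $[0,t^*)$. Testing \eqref{eq:num-a} with $q_h=\mu\in\RM[r_h]$ and using $\mathrm{d}r_h\odot \mathrm{d}(\alpha\times r_h+\beta)=0$ shows that $(\lambda_h,\mu)=0$ for all $\mu$, so $\lambda_h\equiv0$.

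\textbf{Step 2 (error equation).} I take $R_h r$ to be a suitable projection of $r$: either the Lagrange interpolant, or a Ritz-type projection adapted to $a(\cdot,\cdot)$, corrected to satisfy the orthogonality to $\RM$. I then split
\[
r_h-r^{(-\ell)}=(r_h-R_hr)+(R_hr-r^{(-\ell)})=e_h+\eta,
\qquad \|\eta\|_{W^{1,\infty}}\lesssim h^{k-1+\dots}.
\]
Subtracting the consistency equation for $(r,\partial_t r)$, which \eqref{eq:g_ell} and \eqref{eq:gh_app} make consistent up to $O(h^k)$, from \eqref{eq:num} yields the following: $\dot e_h$ solves the discrete saddle system with right-hand side equal to $O(h^k)$ plus nonlinear terms. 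These nonlinear terms have the form $\mathrm{d}e_h\odot \mathrm{d}\partial_t r$, $\mathrm{d}e_h\odot \mathrm{d}v_h$ and $\mathrm{d}r_h\odot \mathrm{d}e_h$, and they are bounded by $\|e_h\|_{W^{1,\infty}}$ and $\|\mathrm{d}e_h\|_{L^2}$.

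\textbf{Step 3 (stability and Gr\"onwall; main obstacle).} Applying the discrete Korn stability to $\dot e_h$ gives
\[
\normt{\dot e_h}\lesssim h^k+\normt{e_h}+\|\mathrm{d}e_h\|_{L^2}\,(1+\|e_h\|_{W^{1,\infty}}).
\]
The obstacle is that $\normt{\cdot}$ controls only the tangential $H^1$ part, whereas $\|\mathrm{d}(n\cdot e_h)\|$ appears on the right. I handle it as the introduction indicates. The bootstrap hypothesis is $\|e_h\|_{L^2}\le h^{k-\delta}$ on $[0,t^*)$. The inverse inequalities give $\|\mathrm{d}e_h\|_{L^2}\lesssim h^{-1}\|e_h\|_{L^2}$ and $\|e_h\|_{W^{1,\infty}}\lesssim h^{-2}\|e_h\|_{L^2}\le h^{k-2-\delta}$. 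The degree condition $k\ge5$ is chosen so that every such product is $\lesssim h^k+\normt{e_h}$ plus a term small enough to absorb. In particular $h^{k-2-\delta}=o(h)$, so the Step~1 hypothesis holds. The linear normal term $\mathrm{d}r\odot\mathrm{d}((n\cdot e_h)n)$ is not a loss, since by \eqref{eq:TanH1_NorL2} it is algebraic in $n\cdot e_h$. Gr\"onwall then yields $\normt{e_h(t)}\lesssim h^k$ on $[0,t^*)$, which strictly improves the bootstrap hypothesis, so $t^*=T$.

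\textbf{Step 4 (conclusion).} Finally I convert the estimate to $\Man$ via \eqref{eq:norm-equi} and the approximation of $\eta$. This gives $\normt{r_h^{(\ell)}-r}_{\Man}\lesssim h^k$. If $\eta$ contributes only $h^{k-1}$ in ${\rm D}_r$, I would instead choose $R_h$ as the Ritz projection associated with ${\rm D}_r$ to get full order.
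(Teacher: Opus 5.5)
\textbf{There is a genuine gap in Step~3: the Gr\"onwall argument does not close.}

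The term $\mathrm{d}e_h\odot\mathrm{d}\partial_t r$ that you list in Step~2 is \emph{linear} in $e_h$ with an $O(1)$ coefficient. Unlike $\mathrm{d}r\odot\mathrm{d}((n\cdot e_h)n)$, it is not algebraic in the normal component. Its components contain $\tfrac12\bigl((n\cdot\partial_i\partial_t r)\,\partial_j(n\cdot e_h)+(n\cdot\partial_j\partial_t r)\,\partial_i(n\cdot e_h)\bigr)$, and $n\cdot\partial_i\partial_t r$ does not vanish in general. So in your bound $\normt{\dot e_h}\lesssim h^k+\normt{e_h}+\|\mathrm{d}e_h\|_{L^2}(1+\|e_h\|_{W^{1,\infty}})$, the factor $1$ multiplies a quantity the graph norm does not control. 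The inverse inequality then only gives $\normt{\dot e_h}\lesssim h^k+h^{-1}\normt{e_h}$. This is not a product of small quantities, so no choice of $k$ lets you absorb it. Gr\"onwall produces a factor $e^{CT/h}$. Inside your bootstrap you only recover $\normt{e_h}\lesssim h^{k-1-\delta}$, which does not improve the hypothesis $\|e_h\|_{L^2}\le h^{k-\delta}$. The loss is real: it is exactly the $h^{-1}\normt{e_r}_h$ in the paper's velocity bound (Lemma~\ref{lm:esti_ev}).

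\textbf{The missing idea} is to run the energy estimate on $e_h$ itself, rather than integrating a bound for $\dot e_h$, and to exploit the time-derivative structure of the error equation. Write $r_h^*$ for your $R_hr$ and use the identity
$2\,\mathrm{d}r_h\odot\mathrm{d}\partial_t r_h-2\,\mathrm{d}r_h^*\odot\mathrm{d}\partial_t r_h^*=\partial_t\bigl(2\,\mathrm{d}r_h^*\odot\mathrm{d}e_h+\mathrm{d}e_h\odot\mathrm{d}e_h\bigr)$.
After you move the part $\mathrm{d}e_h\odot\mathrm{d}q_h$ of the test tensor $\mathrm{d}r_h\odot\mathrm{d}q_h$ to the right-hand side, the dangerous linear term $\mathrm{d}\partial_t r_h^*\odot\mathrm{d}e_h$ combines with $\mathrm{d}r_h^*\odot\mathrm{d}\dot e_h$ into the exact derivative $\partial_t(\mathrm{d}r_h^*\odot\mathrm{d}e_h)$. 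Now test with $q_h=e_h$; the inner product uses the fixed metric $g_{\Man_h}$. Then $\tfrac{\mathrm d}{\mathrm dt}\|\mathrm{d}r_h^*\odot\mathrm{d}e_h\|_{L^2(\Man_h)}^2$ equals a defect term plus terms at least quadratic in the error, such as $(\mathrm{d}e_h\odot\mathrm{d}\dot e_h,\mathrm{d}r_h^*\odot\mathrm{d}e_h)_{\Man_h}$.

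The lossy velocity bound is then used only inside these quadratic terms, where it is multiplied by $\|e_h\|_{W^{1,\infty}}\|e_h\|_{H^1}\lesssim h^{-3}\normt{e_h}^2$. Under a bootstrap hypothesis such as $\normt{e_h}\le h^{4+\varepsilon}$, these terms are $\lesssim\normt{e_h}^2+h^2\normt{\dot e_h}^2+h^{2k+2}$. The factor $h^2$ exactly compensates the $h^{-1}$ loss in $\normt{\dot e_h}\lesssim h^{-1}\normt{e_h}+O(h^k)$. This, and not making $\|e_h\|_{W^{1,\infty}}=o(h)$, is where $k\ge5$ is needed.

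The rigid-motion component $\Pi_{\RM}e_h$ is controlled separately from the constraint equation \eqref{eq:num-b}. The discrete Korn inequality applied to $e_h-\Pi_{\RM}e_h$ then turns both pieces into a bound for $\normt{e_h}$, to which Gr\"onwall applies with $h$-independent constants.
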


\begin{remark}[Isometric embedding of the Ricci flow with error estimates]
    \upshape
Our results can be naturally applied to the finite element approximation of isometric embeddings of the Ricci flow.
Specifically, if the target metric \(g(t)\) on the right-hand side of \eqref{eq:isoemb} in Problem~\ref{Pro:dynamic} is taken to be a smooth solution of the two-dimensional Ricci flow, then, as discussed in Remark~\ref{rmk:posi_curv}, \(g(t)\) satisfies the assumptions of Theorem~\ref{thm:err}.

In the numerical scheme, we take \(g_h(t)\) on the right-hand side of \eqref{eq:num-a} to be the finite element approximation constructed by the method in \cite[Eq.~(3.3)]{gao2025ricci} (see also \cite[Eqs.~(9)--(11)]{gawlik2019finite}). 
In particular, one may employ a degree-\(k\) Regge finite element space to discretize the metric and a degree-\(k\) Lagrange finite element space to discretize the curvature. Then the error analysis in Theorem~3.1 of \cite{gao2025ricci} guarantees that the approximation property \eqref{eq:gh_app} holds. 

Consequently, the numerical solution \(r_h(t)\) produced by scheme \eqref{eq:num} yields a finite element approximation of the isometric embedding of the Ricci flow, with a rigorous error estimate provided by Theorem~\ref{thm:err}.
\end{remark}

\section{Discrete Korn inequality}\label{sec:korn}
In this section, we establish a discrete Korn inequality for finite element functions, which characterizes infinitesimal rigidity at the discrete level.
It is used to prove the existence and uniqueness of the numerical solution and is also a key ingredient in the convergence and error analysis.
 
\begin{lemma}[Discrete Korn inequality]\label{lm:dis_Korn}
Let \(r:\Man \rightarrow \mathbb{R}^3\) be a smooth embedding of $g$ with positive Gaussian curvature. Assume that the induced metric of \(\Man\) is approximated by Regge finite element space of degree \(k_g\ge2\). Then there exists \(h_0>0\) such that, for any \(0<h<h_0\) and any finite element function \(\tilde r_h\in V_h^3\) satisfying
\begin{equation}\label{eq:dis_Korn_cond} 
\| \tilde r_h^{(\ell)}-r\|_{W^{1,\infty}(\Man)}\;\lesssim\; h^{1+\varepsilon},\qquad \varepsilon\in(0,1),
\end{equation} 
it holds that
\begin{equation}\label{eq:dis_Korn} 
 \|v_h\|_{L^2(\Man_h)}
+ \|\mathrm{d}\tilde r_h \odot \mathrm{d}v_h\|_{L^2(\Man_h)}
\lesssim  \|\mathrm{d}\tilde r_h \odot \mathrm{d}v_h\|_{L^2(\Man_h)} 
, \quad \forall\, v_h \in \big(\RM[\tilde r_h]\big)^{\perp},
\end{equation} 
where \(\RM[\tilde r_h]=\{\alpha\times \tilde r_h+\beta:\alpha,\beta\in\mathbb{R}^3\}\), and \(\big(\RM[\tilde r_h]\big)^{\perp}\) denotes the \(L^2(\Man_h)\)-orthogonal complement with respect to the inner product \((\cdot,\cdot)_{\Man_h}\) defined in \eqref{eq:inner_Mh}.
\end{lemma}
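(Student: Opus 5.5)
The inequality \eqref{eq:dis_Korn} reduces to the bound $\|v_h\|_{L^2(\Man_h)}\lesssim \|\mathrm{d}\tilde r_h\odot\mathrm{d}v_h\|_{L^2(\Man_h)}$, since the second term on the left already appears on the right. I would prove it by perturbation from the continuous Korn inequality of Theorem~\ref{lm:Korn}. Let $v_h\in(\RM[\tilde r_h])^\perp$ and set $v=v_h^{(\ell)}$ on $\Man$. Decompose $v=v_{\RM}+v_\perp$ with $v_{\RM}\in\RM[r]$ and $v_\perp\in(\RM[r])^\perp$, the splitting being $L^2(\Man)$-orthogonal. The plan is to bound $v_{\RM}$ and $v_\perp$ separately and then absorb the small terms.

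\emph{Step 1: $v_{\RM}$ is small.} For $\mu=\alpha\times r+\beta$, put $\mu_h=\alpha\times\tilde r_h+\beta\in\RM[\tilde r_h]$. Since $(v_h,\mu_h)_{\Man_h}=0$, I write
\[
(v,\mu)_\Man=(v,\mu-\mu_h^{(\ell)})_\Man+\big[(v_h^{(\ell)},\mu_h^{(\ell)})_\Man-(v_h,\mu_h)_{\Man_h}\big].
\]
The first term is at most $h^{1+\varepsilon}|\alpha|\,\|v\|_{L^2}$ by \eqref{eq:dis_Korn_cond}. The second is at most $h^{k_g+1}\|v_h\|\,\|\mu_h\|$ by \eqref{eq:geo_err_vector}. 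All norms on the six-dimensional space $\RM[r]$ are equivalent, so $\|\mu\|\eqsim|\alpha|+|\beta|$. This gives
\[
\|v_{\RM}\|_{L^2(\Man)}\lesssim (h^{1+\varepsilon}+h^{k_g+1})\,\|v\|_{L^2(\Man)}.
\]

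\emph{Step 2: Korn for $v_\perp$ and replacement of $r$ by $\tilde r_h$.} By \eqref{eq:rigid}, ${\rm D}_rv_{\RM}=0$, so Theorem~\ref{lm:Korn} gives $\|v_\perp\|\lesssim\|{\rm D}_rv_\perp\|=\|{\rm D}_rv\|$. Next,
\[
{\rm D}_rv=\mathrm{d}\tilde r_h^{(\ell)}\odot\mathrm{d}v+\mathrm{d}(r-\tilde r_h^{(\ell)})\odot\mathrm{d}v .
\]
Pullback commutes with $\mathrm d$ and $\odot$, so $\mathrm{d}\tilde r_h^{(\ell)}\odot\mathrm{d}v=(a^{-1})^*(\mathrm{d}\tilde r_h\odot\mathrm{d}v_h)$. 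By the norm equivalence \eqref{eq:norm-equi}, its $L^2(\Man)$ norm is comparable to $\|\mathrm{d}\tilde r_h\odot\mathrm{d}v_h\|_{L^2(\Man_h)}$. The remainder term I bound by
\[
\|r-\tilde r_h^{(\ell)}\|_{W^{1,\infty}}\|\mathrm dv\|_{L^2}\lesssim h^{1+\varepsilon}\|v_h\|_{H^1(\Man_h)}\lesssim h^{\varepsilon}\|v_h\|_{L^2(\Man_h)},
\]
using the inverse inequality on the quasi-uniform mesh.

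\emph{Step 3: absorption.} Combining the two steps with \eqref{eq:norm-equi},
\[
\|v_h\|_{L^2(\Man_h)}\lesssim \|v\|\le\|v_{\RM}\|+\|v_\perp\|\lesssim (h^{1+\varepsilon}+h^{\varepsilon}+h^{k_g+1})\|v_h\|+\|\mathrm{d}\tilde r_h\odot\mathrm{d}v_h\|_{L^2(\Man_h)}.
\]
For $h<h_0$ small enough, the first term on the right is absorbed into the left-hand side, which proves the claim.

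The main obstacle is Step 2. The continuous Korn inequality controls only the tangential part in $H^1$ and the normal part only in $L^2$, while the perturbation term $\mathrm d(r-\tilde r_h^{(\ell)})\odot\mathrm dv$ involves the full gradient of $v$, including its normal component. The only available remedy is the inverse inequality, which costs one power of $h$. This is exactly why the hypothesis \eqref{eq:dis_Korn_cond} is stated with $h^{1+\varepsilon}$ rather than $o(1)$. I would also check carefully that the metric mismatch between $g_{\Man_h}$ and $a^*g_\Man$ enters only through the norm constants and the $O(h^{k_g+1})$ consistency errors, and never in a form that calls for an additional inverse estimate.
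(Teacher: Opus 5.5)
Your proposal is correct and follows essentially the same route as the paper. You apply the continuous Korn inequality to the $\RM[r]$-orthogonal part of $v_h^{(\ell)}$, bound the $\RM[r]$-component by $O(h^{1+\varepsilon})\|v_h\|_{L^2(\Man_h)}$ using orthogonality to $\RM[\tilde r_h]$ together with \eqref{eq:geo_err_vector} and \eqref{eq:dis_Korn_cond}, spend one inverse inequality to replace $r$ by $\tilde r_h$, and absorb. The differences are minor: you use unsquared norms and the triangle inequality, which is slightly cleaner than the paper's difference-of-squares bound on $I_3$. You also handle the metric mismatch directly through \eqref{eq:norm-equi}, which avoids the extra inverse estimate the paper spends on $I_2$.
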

\begin{proof}
To begin, we split $\| \mathrm{d} \tilde r_h \odot \mathrm{d} v_h \|_{L^2(\Man_h)} $
into three terms $I_1, I_2,$ and $I_3$; namely,  
\[ \begin{aligned}
    \| \mathrm{d} \tilde r_h \odot \mathrm{d} v_h \|_{L^2(\Man_h)}^2 
    & = \| \mathrm{d} r \odot \mathrm{d} v_h^{(\ell)} \|_{L^2(\Man)}^2 
    + \big(\| \mathrm{d} r^{(-\ell)} \odot \mathrm{d} v_h \|_{L^2(\Man_h)}^2  - \| \mathrm{d} r \odot \mathrm{d} v_h^{(\ell)} \|_{L^2(\Man)}^2 \big) \\ 
    &\qquad + \big(  \| \mathrm{d} \tilde r_h \odot \mathrm{d} v_h \|_{L^2(\Man_h)}^2  - \| \mathrm{d} r^{(-\ell)} \odot \mathrm{d} v_h \|_{L^2(\Man_h)}^2 \big) \\
    & =: I_1 + I_2 + I_3. 
\end{aligned} \]
    We further decompose \(v_h^{(\ell)}\) orthogonally as \(v_h^{(\ell)} = (v_h^{(\ell)})_{\perp} + (v_h^{(\ell)})_{\mathrm{RM}}\), where \((v_h^{(\ell)})_{\perp} \in (\mathrm{RM}[r])^{\perp}\) and \((v_h^{(\ell)})_{\mathrm{RM}} \in \mathrm{RM}[r]\).
    Then, it follows from the Korn inequality \eqref{eq:korn} and \eqref{eq:rigid} that  
    \[ \begin{aligned}
        I_1 & = \| {\rm D}_r  v_h^{(\ell)} \|_{L^2(\Man)}^2  
        = \| {\rm D}_r (v_h^{(\ell)})_{\perp} \|_{L^2(\Man)}^2 
        \\
        & \gtrsim \| (v_h^{(\ell)})_{\perp}\|_{L^2(\Man)}^2 = 
        \| v_h^{(\ell)}\|_{L^2(\Man)}^2 - \| (v_h^{(\ell)})_{\RM}\|_{L^2(\Man)}^2. 
    \end{aligned}\]
    Combining \eqref{eq:geo_err_tensor} and the inverse inequality, it holds that
    \[ \begin{aligned}
        |I_2| & = | \| \mathrm{d} r^{(-\ell)} \odot \mathrm{d} v_h \|_{L^2(\Man_h)}^2  - \| \mathrm{d} r \odot \mathrm{d} v_h^{(\ell)} \|_{L^2(\Man)}^2 |  \\ 
        & \lesssim h^{k_g+1} \| \mathrm{d} r^{(-\ell)} \odot  \mathrm{d} v_h\|_{L^2(\Man_h)}^2 \lesssim h^{k_g+1} \|\mathrm{d} v_h\|_{L^2(\Man_h)}^2 \lesssim h^{k_g - 1} \| v_h\|_{L^2(\Man_h)}^2. 
    \end{aligned} \]
    By the condition \eqref{eq:dis_Korn_cond}, we have 
    \[ |I_3| 
    \lesssim \| \mathrm{d} (\tilde r_h - r^{(-\ell)}) \odot \mathrm{d} v_h \|_{L^2(\Man_h)}^2
    \lesssim h^{2\varepsilon + 2} \|\mathrm{d} v_h\|_{L^2(\Man_h)}^2 \lesssim 
    h^{2\varepsilon} \| v_h\|_{L^2(\Man_h)}^2 .
    \]
    Summing the above estimates for $I_1, I_2, I_3$, we deduce 
    \[ \begin{aligned}
        \| \mathrm{d} \tilde r_h \odot \mathrm{d} v_h \|_{L^2(\Man_h)}^2 &\gtrsim
        (1 - h^{k_g-1} - h^{2\varepsilon})\|v_h\|_{L^2(\Man_h)}^2 -  \| (v_h^{(\ell)})_{\RM}\|_{L^2(\Man)}^2\\ 
        &\gtrsim \|v_h\|_{L^2(\Man_h)}^2 -  \| (v_h^{(\ell)})_{\RM}\|_{L^2(\Man)}^2,
    \end{aligned} \]
    where we have used the norm equivalence \eqref{eq:norm-equi} and the condition $k_g \geq 2$. 
    Now, it remains to estimate $\| (v_h^{(\ell)})_{\RM}\|_{L^2(\Man)}^2$.
    Exploiting $v_h \in (\RM[\tilde r_h])^\perp$ 
    and the norm equivalence in finite-dimensional spaces $\RM[r]$, we deduce 
    \[ \begin{aligned}
       \| (v_h^{(\ell)})_{\RM}\|_{L^2(\Man)} & \eqsim 
       \sup_{\alpha, \beta \in \mathbb{R}^3} 
       \frac{((v_h^{(\ell)})_{\RM}, 
       \alpha \times r + \beta)_{\Man}}{|\alpha| + |\beta|} 
       = \sup_{\alpha, \beta \in \mathbb{R}^3} 
       \frac{(v_h^{(\ell)}, 
       \alpha \times r + \beta)_{\Man} }{|\alpha| + |\beta|}  \\ 
       & = \sup_{\alpha, \beta \in \mathbb{R}^3} 
       \frac{(v_h^{(\ell)}, 
       \alpha \times r + \beta)_{\Man} 
       - (v_h, \alpha \times \tilde r_h + \beta)_{\Man_h} 
       }{|\alpha| + |\beta|} \lesssim h^{1+\varepsilon} \|v_h\|_{L^2(\Man_h)},
    \end{aligned} \] 
where in the last step, we have used \eqref{eq:geo_err_vector} and the condition \eqref{eq:dis_Korn_cond}.    
By substituting this estimate, we conclude the proof.
\end{proof}
\begin{remark}[Discrete infinitesimal rigidity]\label{rmk:rigid}
\upshape
As a corollary of Lemma~\ref{lm:dis_Korn}, we have
\begin{equation}\label{eq:dis-rigidity}
\RM[\tilde r_h] \;=\; \{\, v_h \in V_h^3 \mid \mathrm{d}\tilde r_h \odot \mathrm{d}v_h = 0 \,\},
\end{equation}
for any $\tilde r_h$ satisfying \eqref{eq:dis_Korn_cond}. Indeed, if $v_h=\alpha\times \tilde r_h+\beta\in \RM[\tilde r_h]$ with $\alpha,\beta\in\mathbb{R}^3$, then by direct computation, we have
\[
\mathrm{d}\tilde r_h \odot \mathrm{d}v_h
= \tfrac12(\partial_i\tilde r_h\cdot(\alpha\times \partial_j\tilde r_h)
+ (\alpha\times \partial_i\tilde r_h)\cdot \partial_j\tilde r_h\big)\,\mathrm{d}x^i\otimes \mathrm{d}x^j
=0.
\]
Conversely, if $v_h\in V_h^3$ satisfies $\mathrm{d}\tilde r_h\odot \mathrm{d}v_h=0$, decompose it $L^2(\Man_h)$–orthogonally (with respect to $(\cdot,\cdot)_{\Man_h}$) as $v_h=v_{h,\mathrm{RM}}+v_{h,\perp}$, where $v_{h,\mathrm{RM}}\in \RM[\tilde r_h]$ and $v_{h,\perp}\in (\RM[\tilde r_h])^\perp$. Lemma~\ref{lm:dis_Korn} then yields
\[
0=\|\mathrm{d}\tilde r_h\odot \mathrm{d}v_h\|_{L^2(\Man_h)}
=\|\mathrm{d}\tilde r_h\odot \mathrm{d}v_{h,\perp}\|_{L^2(\Man_h)} \gtrsim \|v_{h,\perp}\|_{L^2(\Man_h)},
\]
so $v_{h,\perp}=0$ and thus $v_h=v_{h,\mathrm{RM}}\in \RM[\tilde r_h]$.
\end{remark}
In the following context, \( \tilde{r}_h \) in Lemma \ref{lm:dis_Korn} can be selected as the numerical solution \( r_h \) to demonstrate the well-posedness of the numerical scheme in Section \ref{subsec:wellpose_num}, or it may alternatively be selected as the interpolant of the smooth solution, as described in Section \ref{subsec:dis_graph_norm}, for the purpose of estimating the error.

\section{Well-posedness and error estimates of numerical scheme}\label{sec:pf_main}
This section focuses on the proof of our main result, Theorem~\ref{thm:err}, which concerns the well-posedness of the numerical scheme \eqref{eq:num} as well as its error estimates.
Let \(I_h\) be the Lagrange interpolation operator onto \(V_h\), and define the interpolant
\[r_h^*(t)=I_h r^{(-\ell)}(t)\in V_h^3,\] 
which, by standard finite element approximation theory,
provides a quasi-optimal approximation of the smooth solution \(r(t)\).
We choose the initial data in \eqref{eq:num} as
\begin{equation}\label{eq:ini_data}
r_h(0)=r_h^*(0)=I_h r^{(-\ell)}(0).
\end{equation}
Throughout, we fix the polynomial degree $k=k_g\ge 5$.
The choice of a high polynomial degree is essential for controlling the \(W^{1,\infty}\)- and \(H^1\)-norms
of the errors arising from the nonlinear terms in the error equation.
With this, inverse inequalities can be applied to reduce these contributions to \(L^2\)-norms, ensuring that they remain sufficiently small.

\subsection{Well-posedness of the numerical scheme}\label{subsec:wellpose_num}

In this subsection, we recast the numerical scheme \eqref{eq:num} as an equivalent ODE system and prove its
local existence, uniqueness, and continuous dependence on \(t\).
We begin by considering a fixed \(r_h \in V_h^3\). Then \eqref{eq:num-a}--\eqref{eq:num-b} can be viewed as a linear system for
\((v_h,\lambda_h)\in V_h^3\times \RM[r_h]\).
We define
\begin{align*}
a_h(u_h, q_h) &= 2(\mathrm{d}r_h \odot \mathrm{d}u_h, \mathrm{d}r_h \odot \mathrm{d}q_h)_{\Man_h},
\quad && \forall u_h, q_h \in V_h^3,
\\
b_h(q_h, \lambda_h) &= (q_h,\lambda_h)_{\Man_h}, \quad && \forall q_h \in V_h^3
\text{ and } \lambda_h \in \RM[r_h].
\end{align*}
Now suppose that \(r_h\in V_h^3\) satisfies \eqref{eq:dis_Korn_cond}. Then Lemma~\ref{lm:dis_Korn} implies that
\[
a_h(u_h,u_h)=\|\mathrm{d} r_h \odot \mathrm{d} u_h\|_{L^2(\Man_h)}^{2}\;\gtrsim\;\|u_h\|_{L^2(\Man_h)}^{2},
\qquad \forall\,u_h\in(\RM[r_h])^{\perp}.
\]
Furthermore, since \(\RM[r_h] \subset V_h^3 \), we have
\[
    \inf_{\lambda_h \in \RM[r_h]}
    \sup_{q_h \in V_h^3}
    \frac{
        b_h(q_h, \lambda_h)
    }{
        \|\lambda_h\|_{L^2(\Man_h)}
        \|q_h\|_{L^2(\Man_h)}
    } \geq
    \inf_{\lambda_h \in \RM[r_h]}
    \frac{
        b_h(\lambda_h, \lambda_h)
    }{
        \|\lambda_h\|_{L^2(\Man_h)}
        \|\lambda_h\|_{L^2(\Man_h)}}
    = 1.
\]
By the abstract  theory in \cite{boffi2013mixed}, the two estimates above (coercivity on the kernel and an inf--sup condition)
imply that \eqref{eq:num-a}--\eqref{eq:num-b} admits a unique solution \((v_h,\lambda_h)\in V_h^3\times \RM[r_h]\),
provided that \(r_h\) satisfies \eqref{eq:dis_Korn_cond}. 
In fact, we have \(\lambda_h=0\), since we can test \eqref{eq:num-a} with \(q_h=\lambda_h\in \RM[r_h]\subset V_h^3\) and invoke the discrete infinitesimal rigidity \eqref{eq:dis-rigidity}. 
Moreover, the abstract theory in \cite{boffi2013mixed} also yields the stability estimate
\[
\| v_h \|_{L^2(\Man_h)} + \|\lambda_h\|_{L^2(\Man_h)} \;\le\; C_{\mathbf r}\,\|\partial_t g_h\|_{L^2(\Man_h)},
\]
where the constant \(C_{\mathbf r}\) depends Lipschitz continuously on the norm of \(r_h\in V_h^3\).

We collect the nodal values of \(r_h\in V_h^3\) into a vector \(\mathbf r \in \mathbb{R}^{\dim V_h^3}\),
and denote by \(\mathbf f(t,\mathbf r)\in \mathbb{R}^{\dim V_h^3}\) the vector of nodal values of the velocity \(v_h\) obtained as the solution of the linear system \eqref{eq:num-a}--\eqref{eq:num-b}.
The discussion above indeed shows that, for any \(\mathbf r \in \mathbb{R}^{\dim V_h^3}\) whose associated finite element function
\(r_h\in V_h^3\) satisfies \eqref{eq:dis_Korn_cond}, the mapping \(\mathbf f(t,\mathbf r)\in \mathbb{R}^{\dim V_h^3}\)
is well defined and depends Lipschitz continuously on \(\mathbf r\).
The numerical scheme \eqref{eq:num} is equivalent to the following ODE system:
\begin{equation}\label{eq:ode_num}
\frac{\mathrm d \mathbf r}{\mathrm d t}(t)=\mathbf f\bigl(t,\mathbf r(t)\bigr).
\end{equation}
This ODE is equipped with the initial value \(\mathbf r(0)\) chosen as in \eqref{eq:ini_data}.
By the interpolation error estimates and the assumption \(k\ge 5\), the initial value \(r_h(0)\) satisfies \eqref{eq:dis_Korn_cond}.
Therefore, the ODE \eqref{eq:ode_num} (equivalently, the scheme \eqref{eq:num}) admits a unique local solution \(r_h(t) \in V_h^3\)
that depends continuously on \(t\).

\subsection{Discrete graph norm}\label{subsec:dis_graph_norm}
We define the discrete graph norm associated with \( r_h^* \) as
\begin{equation}\label{eq:dis_graph_norm}
\normt{v_h}_h^2 = \|v_h\|_{L^2(\Man_h)}^2 + \| \mathrm{d} r_h^* \odot \mathrm{d} v_h \|_{L^2(\Man_h)}^2,
\qquad \forall\, v_h \in V_h^3.
\end{equation}
The dual norm is defined by
\[
\normt{w_h}_{h,*} = \sup_{v_h \in V_h^3 \setminus \{0\}} 
\frac{  (w_h, v_h)_{\Man_h} }{ \normt{v_h}_h },
\qquad \forall\, w_h \in V_h^3.
\]
These norms will be employed in the following stability and error estimates.
By the approximation property of \( r_h^* \) (and for sufficiently small \(h\)), we have that \(r_h^*\) satisfies \eqref{eq:dis_Korn_cond},
and hence Lemma~\ref{lm:dis_Korn} implies the discrete Korn inequality
\begin{equation}\label{eq:dis_korn_rh*}
\| \mathrm{d} r_h^* \odot \mathrm{d} v_h \|_{L^2(\Man_h)} \gtrsim \| v_h \|_{L^2(\Man_h)},
\quad \forall v_h \in \big(\RM[r_h^*]\big)^{\perp},
\end{equation}
where \(\RM[r_h^*]=\{\alpha\times r_h^*+\beta:\alpha,\beta\in\mathbb{R}^3\}\), and \(\big(\RM[r_h^*]\big)^{\perp}\) denotes the \(L^2(\Man_h)\)-orthogonal complement with respect to the inner product \((\cdot,\cdot)_{\Man_h}\) defined in \eqref{eq:inner_Mh}. 
We use \( \Pi_{\RM}: V_h^3 \to \RM[r_h^*] \) to denote the \( L^2(\Man_h) \)-orthogonal projection, given by
\begin{equation}\label{eq:Pi_RM}
( \Pi_{\RM} v_h, \alpha \times r_h^* + \beta )_{\Man_h}
= ( v_h, \alpha \times r_h^* + \beta )_{\Man_h},
\quad \forall\, \alpha, \beta \in \mathbb{R}^3.
\end{equation}
We emphasize that the orthogonal projection $\Pi_{\RM}$ implicitly depends on the discrete map 
$r_h^* \in V_h^3$, although this dependence is not explicitly shown in the notation. 
As a direct consequence of the discrete Korn inequality \eqref{eq:dis_korn_rh*} and the rigidity property \eqref{eq:dis-rigidity}, we obtain the following estimate:
\begin{equation}\label{eq:graphnorm_esti}
\normt{v_h}_h \leq \normt{\Pi_{\RM} v_h}_h + \normt{ v_h - \Pi_{\RM} v_h}_h
\lesssim \|\Pi_{\RM} v_h \|_{L^2(\Man_h)} +  
\|\mathrm{d} r_h^* \odot \mathrm{d} v_h\|_{L^2(\Man_h)},
\end{equation}
for all \( v_h \in V_h^3 \).
In the following stability estimate, the $W^{1,\infty}(\Man_h)$ or $H^1(\Man_h)$ norms of the error may appear in the nonlinear terms. To handle this, we apply the following inverse inequality to convert these terms into $L^2(\Man_h)$ norms, which are then controlled by the discrete graph norm:
\begin{equation}\label{eq:inv}
\|v_h\|_{W^{1,\infty}(\Man_h)} \lesssim h^{-1}\,\|v_h\|_{H^1(\Man_h)} \lesssim h^{-2}\,\|v_h\|_{L^2(\Man_h)} \leq h^{-2}\,\normt{v_h}_h, \quad \forall v_h \in V_h^3.
\end{equation}


\subsection{Defect estimates}
We define the defect functions \(d_v(t) \in V_h^3\) and \(d_{\lambda}(t) \in \RM[r_h^*]\) as follows:
\begin{subequations}\label{eq:def}  
\begin{align}
\label{eq:def-a} 
(d_v, q_h)_{\Man_h}
&= 
(\,\partial_t g_h - 2\,\mathrm{d} r_h^* \odot \mathrm{d}(\partial_t r_h^*)\,,\, \mathrm{d} r_h^* \odot \mathrm{d} q_h\,)_{\Man_h}, && \forall  q_h \in V_h^3 \\ 
\label{eq:def-b} 
(d_{\lambda}, \mu_h^*)_{\Man_h} &= 
(\partial_t r_h^*, \mu_h^*)_{\Man_h}, && \forall \mu_h^* \in \RM[r_h^*].
\end{align}
\end{subequations}
The approximation property of $r_h^*(t)$ and \eqref{eq:gh_app} guarantee the following defect estimates: 
\begin{lemma}[Defect estimates]\label{lm:def-esti} 
Under the conditions of Theorem \ref{thm:err}, for the defect functions \(d_v(t) \in V_h^3\) and \(d_{\lambda}(t) \in \RM[r_h^*]\) defined in \eqref{eq:def}, it holds that 
\[ \normt{d_v(t)}_{h,*} \lesssim h^k, \quad \text{and} \quad \|d_\lambda(t)\|_{L^2(\Man_h)} \lesssim h^{k+1}, \qquad \forall t \in [0,T]. \]
\end{lemma}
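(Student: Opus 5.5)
For the first bound I will use the identity \eqref{eq:g_ell}: since $\partial_t g^{(-\ell)} = 2\,\mathrm{d}r^{(-\ell)}\odot\mathrm{d}\partial_t r^{(-\ell)}$, I split the residual as
\[
\partial_t g_h - 2\,\mathrm{d} r_h^*\odot \mathrm{d}\partial_t r_h^*
= (\partial_t g_h - \partial_t g^{(-\ell)})
+ 2\,\mathrm{d}(r^{(-\ell)}-r_h^*)\odot \mathrm{d}\partial_t r^{(-\ell)}
+ 2\,\mathrm{d} r_h^*\odot \mathrm{d}(\partial_t r^{(-\ell)}-\partial_t r_h^*).
\]
The first term is $O(h^k)$ in $L^2(\Man_h)$ by \eqref{eq:gh_app}. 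Because $I_h$ commutes with $\partial_t$, we have $\partial_t r_h^* = I_h \partial_t r^{(-\ell)}$. Standard piecewise interpolation estimates on $\Man_h$ for the smooth lifted functions then give $\|\mathrm{d}(r^{(-\ell)}-r_h^*)\|_{L^2}+\|\mathrm{d}(\partial_t r^{(-\ell)}-\partial_t r_h^*)\|_{L^2}\lesssim h^k$. Since $\partial_t r^{(-\ell)}$ and $r_h^*$ are bounded in $W^{1,\infty}$ (the latter by interpolation stability), the whole residual is $O(h^k)$ in $L^2(\Man_h)$. Cauchy--Schwarz in \eqref{eq:def-a} gives $(d_v,q_h)_{\Man_h}\le Ch^k\|\mathrm{d}r_h^*\odot\mathrm{d}q_h\|_{L^2(\Man_h)}\le Ch^k\normt{q_h}_h$, so $\normt{d_v}_{h,*}\lesssim h^k$. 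This step is routine. The only care needed is that the Regge metric $g_{\Man_h}$ defines the norms; it is uniformly equivalent to $a^*g_\Man$ for small $h$ by \eqref{eq:gMh_app}.

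For $d_\lambda$, the extra power of $h$ must come from orthogonality at the continuous level. The exact velocity satisfies \eqref{eq:ptr_orth_RM}, namely $(\partial_t r,\alpha\times r+\beta)_\Man=0$. Since $d_\lambda\in\RM[r_h^*]$ and this space is finite dimensional with norms uniformly equivalent to $|\alpha|+|\beta|$ (using that $r_h^*$ is close to $r^{(-\ell)}$), I will estimate
\[
\|d_\lambda\|_{L^2(\Man_h)}\eqsim\sup_{\alpha,\beta}\frac{(\partial_t r_h^*,\alpha\times r_h^*+\beta)_{\Man_h}}{|\alpha|+|\beta|}.
\]
I then subtract $(\partial_t r,\alpha\times r+\beta)_\Man=0$ and split the numerator into three differences:
\[
(\partial_t r_h^*-\partial_t r^{(-\ell)},\alpha\times r_h^*+\beta)_{\Man_h},
\qquad
(\partial_t r^{(-\ell)},\alpha\times(r_h^*-r^{(-\ell)}))_{\Man_h},
\]
and the geometric term $(\partial_t r^{(-\ell)},\alpha\times r^{(-\ell)}+\beta)_{\Man_h}-(\partial_t r,\alpha\times r+\beta)_\Man$. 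The first two are bounded by $L^2$ interpolation errors, $\lesssim h^{k+1}(|\alpha|+|\beta|)$. The third is bounded by \eqref{eq:geo_err_vector} by $h^{k_g+1}(|\alpha|+|\beta|)=h^{k+1}(|\alpha|+|\beta|)$, using $k=k_g$.

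The main obstacle I expect is the uniform norm equivalence on $\RM[r_h^*]$, i.e., showing $\|\alpha\times r_h^*+\beta\|_{L^2(\Man_h)}\gtrsim|\alpha|+|\beta|$ with a constant independent of $h$ and $t$. I will handle it by perturbation. On the smooth side the bound holds for $\alpha\times r+\beta$ because $r(\Man)$ is not contained in a line (so the map $(\alpha,\beta)\mapsto\alpha\times r+\beta$ is injective), and by compactness and smoothness in $t$ the constant is uniform on $[0,T]$. Transferring it to $\Man_h$ costs only $O(h^{k+1})$ via \eqref{eq:geo_err_vector} and the bound $\|r_h^*-r^{(-\ell)}\|_{L^\infty}\lesssim h^{k+1}$, which is absorbed for $h\le h_0$.
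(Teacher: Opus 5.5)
Your proof is correct and follows essentially the same route as the paper. For $d_v$ you insert $\partial_t g^{(-\ell)} = 2\,\mathrm{d}r^{(-\ell)}\odot\mathrm{d}\partial_t r^{(-\ell)}$ and use \eqref{eq:gh_app} with interpolation estimates; for $d_\lambda$ you subtract the continuous orthogonality \eqref{eq:ptr_orth_RM} and apply \eqref{eq:geo_err_vector} with $k_g=k$. Your one addition is an explicit perturbation argument for the $h$-uniform equivalence $\|\alpha\times r_h^*+\beta\|_{L^2(\Man_h)}\eqsim|\alpha|+|\beta|$ on $\RM[r_h^*]$, which the paper uses only implicitly when it states its final bound as $h^{k+1}\|\mu_h^*\|_{L^2(\Man_h)}$.
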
 
\begin{proof}
Substituting the second identity in \eqref{eq:g_ell} into \eqref{eq:def-a}, we have
\[ 
    (d_v, q_h)_{\Man_h} = 
     (\partial_t g_h - \partial_t g^{(-\ell)} , \mathrm{d} r_h^* \odot \mathrm{d} q_h)_{\Man_h} -
    2(\mathrm{d} r_h^* \odot \mathrm{d} \partial_t r_h^* - \mathrm{d} r^{(-\ell)} \odot \mathrm{d} \partial_t r^{(-\ell)}, \mathrm{d} r_h^* \odot \mathrm{d} q_h)_{\Man_h}.  
\]
By using the approximation property of $r_h^*$ and the estimate \eqref{eq:gh_app}, 
it holds that
\[ (d_v, q_h)_{\Man_h} \lesssim h^k \|\mathrm{d} r_h^* \odot \mathrm{d} q_h \|_{L^2(\Man_h)}
\leq h^k \normt{q_h}_h
. \]
Writing $ \mu_h^* = \mu_h^{(1)} \times r_h^* + \mu_h^{(2)} $ with $\mu_h^{(1)}, \mu_h^{(2)} \in \mathbb{R}^3$ and  substituting \eqref{eq:ptr_orth_RM} into \eqref{eq:def-b}, we obtain
\[ \begin{aligned}
    (d_{\lambda}, \mu_h^*)_{\Man_h} &= 
    (\partial_t r_h^*, \mu_h^{(1)} \times r_h^* + \mu_h^{(2)})_{\Man_h} - 
    (\partial_t r, \mu_h^{(1)} \times r + \mu_h^{(2)})_{\Man} \\ 
     &\lesssim   (\partial_t r_h^*, \mu_h^{(1)} \times r_h^* + \mu_h^{(2)})_{\Man_h} - 
    (\partial_t r^{(-\ell)}, \mu_h^{(1)} \times r^{(-\ell)} + \mu_h^{(2)})_{\Man_h} + h^{k_g+1}\|\mu_h^*\|_{L^2(\Man_h)} \\
    &\lesssim h^{k+1}\|\mu_h^*\|_{L^2(\Man_h)}, 
\end{aligned} \]
where we have used \eqref{eq:geo_err_vector} together with the approximation property of $r_h^*$. 
\end{proof}

\subsection{Error equation and continuity method}
We define the finite element error functions as 
\begin{align*}
    \hspace{-3mm}
{e_r}(t)  & = r_h(t) - r_h^*(t) \in V_h^3,  \quad  \text{and} \quad
e_v(t)  = \partial_t e_r(t)  = \partial_t r_h(t) - \partial_t r_h^*(t) \in V_h^3. 
\end{align*}
From the initial data \eqref{eq:ini_data}, one immediately obtains $e_r(0)=0$. 
To derive the error equation, we compare the numerical scheme \eqref{eq:num} with the defect equation \eqref{eq:def}, and use the following identities, 
\begin{align}
2\, \mathrm{d}r_h \odot  \mathrm{d} {\partial_t} r_h - 2\, \mathrm{d} r_h^* \odot \mathrm{d} {\partial_t}r_h^* 
&= \partial_t \big( \mathrm{d}r_h \odot \mathrm{d}r_h - \mathrm{d}r_h^* \odot \mathrm{d}r_h^* \big) \notag \\ 
&= \partial_t \big( \mathrm{d}r_h \odot \mathrm{d} e_r + \mathrm{d}e_r \odot \mathrm{d}r_h^* \big) 
= \partial_t \big( 2\, \mathrm{d}r_h^* \odot \mathrm{d}e_r + \mathrm{d}e_r \odot \mathrm{d}e_r \big), \notag \\
\big( \partial_t r_h, \mu_h )_{\Man_h} - \big( \partial_t r_h^*, \mu_h^* )_{\Man_h}
&= \big( \partial_t e_r, \mu_h^* )_{\Man_h} + \big( \partial_t r_h, \mu_h^{(1)} \times e_r )_{\Man_h},\notag
\end{align}
where  $\mu_h = \mu_h^{(1)} \times r_h + \mu_h^{(2)} \in \RM[r_h]$ and $\mu_h^* = \mu_h^{(1)} \times r_h^* + \mu_h^{(2)} \in \RM[r_h^*]$. 
Hence, the error equation is obtained by subtracting \eqref{eq:def} from \eqref{eq:num}, applying the identities above, and noting that \(\lambda_h=0\):
\begin{subequations}\label{eq:err}  
\begin{align}
2(\partial_t(   \mathrm{d} r_h^* \odot\mathrm{d} e_r), \mathrm{d} r_h^* \odot \mathrm{d} q_h)_{\Man_h} 
 &= ( \partial_t g_h - 2\, \mathrm{d}r_h \odot \mathrm{d} \partial_t r_h,\ \mathrm{d}e_r \odot \mathrm{d}q_h )_{\Man_h}  \notag \\
\label{eq:err-a} 
& \quad
- ( \partial_t (\mathrm{d}e_r \odot \mathrm{d}e_r),\ \mathrm{d}r_h^* \odot \mathrm{d}q_h )_{\Man_h} - (d_v, q_h)_{\Man_h}, \\
\label{eq:err-b} 
( \partial_t e_r, \mu_h^* )_{\Man_h} &= - (\partial_t r_h, \mu_h^{(1)} \times e_r)_{\Man_h} - (d_{\lambda}, \mu_h^*)_{\Man_h},
\end{align}
\end{subequations}
for all test functions  \( q_h \in V_h^3 \) and \( \mu_h^* = \mu^{(1)}_h \times r_h^* + \mu^{(2)}_h \in \RM[r_h^*] \) with $\mu_h^{(1)}, \mu_h^{(2)} \in \mathbb{R}^3$.

\smallskip

We focus on the estimate of the norm
\[
\normt{e_r(t)}_h  = (\|e_r( t)\|_{L^2(\Man_h)}^2 + \|\mathrm{d} r_h^*(t) \odot \mathrm{d} e_r(t)\|_{L^2(\Man_h)}^2)^{1/2}. 
\] 
The continuity method for semi-discretization scheme \eqref{eq:num} is to consider 
\begin{equation}\label{eq:t*} 
\begin{split}
    t^*=&\sup\left\{ t \in [0, T]: \eqref{eq:num} \text{ has a solution on } [0,t],  \right. \\
     &\quad\quad \text{and } \normt{e_r(s)}_h \leq h^{4 + \varepsilon}, \text{ for } 0 \leq s \leq t\ \},
\end{split} 
\end{equation}
where $0<\varepsilon<1$ is a small positive number. 
Since \( e_r(0) = 0 \) and the semi-discrete finite element solution of \eqref{eq:num} locally exists  and depends continuously on $t$, it follows that \( t^* > 0 \). Assume \( t^* < T\), in subsequent subsections we will show the solution $r_h$ of \eqref{eq:num} can be extended in a neighborhood of $t^*$ with the same estimate, then by contradiction we get \( t^* = T\) and complete the proof of Theorem \ref{thm:err}. The following estimate can be obtained from \eqref{eq:t*} and the inverse inequality \eqref{eq:inv}
\begin{equation}\label{eq:t*:cor}
    \|  e_{r}(s) \|_{W^{1,\infty}(\Man_h)}\lesssim h^{2+\varepsilon}, \quad s \in [0,t^*].
\end{equation}

\begin{remark}
\upshape
The powers of \(h\) in \eqref{eq:t*} and \eqref{eq:t*:cor} are needed to control the nonlinear terms arising in the error analysis.
For instance, \eqref{eq:t*:cor} is used in Lemma~\ref{lm:esti_ev} to estimate \(J_1\) and \(J_2\), and to control
\(\|\Pi_{\RM} e_v\|_{L^2(\Man_h)}\).
Likewise, \eqref{eq:t*} and \eqref{eq:t*:cor} are used in \eqref{eq:boundedness} to prove a uniform \(W^{1,\infty}\)-bound for the numerical solution.

Moreover, \eqref{eq:t*} is used in Proposition~\ref{prop:esti_drder} to estimate \(K_1\) and \(K_2\), and here the exponent of \(h\) in \eqref{eq:t*} must be strictly larger than \(4\) to absorb the constants appearing in the estimates of \(K_1\) and \(K_2\).
This is precisely where high-order finite elements of degree \(k\ge 5\) are required, in view of the error estimate in Theorem~\ref{thm:err}.
\end{remark}

\subsection{Time derivative of the error}

We are now ready to estimate the time derivative of the error, i.e., $e_{v} = \partial_t e_r$
by choosing the test functions \( q_h = e_{v} \) in \eqref{eq:err-a} and \( \mu_h^* = \Pi_{\RM} e_{v} \) in \eqref{eq:err-b}. 
\begin{lemma}[Estimate of \( e_v \)]\label{lm:esti_ev}
    Under the assumptions of Theorem \ref{thm:err}, there exists a constant $h_0 > 0$ such that the following estimate holds uniformly for $0<h\leq h_0$ and $t\in[0,t^{*}]${\rm:}
    \[
    \normt{e_v(t) }_h\lesssim h^{-1} \normt{e_r(t)}_h + \normt{d_v(t)}_{h,*} + \|d_\lambda(t)\|_{L^2(\Man_h)} + h^{k+1}.
    \]
\end{lemma}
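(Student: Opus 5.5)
We split $\normt{e_v}_h$ via \eqref{eq:graphnorm_esti} into the rigid-motion part $\|\Pi_{\RM} e_v\|_{L^2(\Man_h)}$ and the deformation part $\|\mathrm{d} r_h^*\odot \mathrm{d} e_v\|_{L^2(\Man_h)}$, and bound each with the two error equations. Throughout we use \eqref{eq:t*:cor}, i.e. $\|e_r\|_{W^{1,\infty}(\Man_h)}\lesssim h^{2+\varepsilon}$ on $[0,t^*]$, and the smoothness of $r_h^*$. Together these give a uniform bound on $\|r_h\|_{W^{1,\infty}(\Man_h)}$. We also use $\|\partial_t r_h^*\|_{W^{1,\infty}}\lesssim 1$ and $\|\partial_t g_h\|_{L^\infty}\lesssim 1$; the latter follows from \eqref{eq:gh_app} and an inverse estimate, since $k\ge 5$.

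\emph{Rigid part.} Test \eqref{eq:err-b} with $\mu_h^*=\Pi_{\RM}e_v=\mu^{(1)}\times r_h^*+\mu^{(2)}$. The left side is then $\|\Pi_{\RM}e_v\|_{L^2}^2$. On the right we write $\partial_t r_h=\partial_t r_h^*+e_v$. Norm equivalence on the six-dimensional space $\RM[r_h^*]$ gives $|\mu^{(1)}|\lesssim\|\Pi_{\RM}e_v\|_{L^2}$. Hence
\[
|(\partial_t r_h,\mu^{(1)}\times e_r)_{\Man_h}|\lesssim (1+\|e_v\|_{L^2})\,\|e_r\|_{L^\infty}\,\|\Pi_{\RM}e_v\|_{L^2},
\]
and since $\|e_r\|_{L^\infty}\lesssim h^{2+\varepsilon}$ the $e_v$ contribution is $h^{2+\varepsilon}\normt{e_v}_h$, which can be absorbed later. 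The result is
\[
\|\Pi_{\RM}e_v\|_{L^2}\lesssim \normt{e_r}_h+h^{2+\varepsilon}\normt{e_v}_h+\|d_\lambda\|_{L^2}.
\]

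\emph{Deformation part.} Test \eqref{eq:err-a} with $q_h=e_v$. Because $\partial_t(\mathrm{d}r_h^*\odot\mathrm{d}e_r)=\mathrm{d}r_h^*\odot\mathrm{d}e_v+\mathrm{d}\partial_t r_h^*\odot\mathrm{d}e_r$, the left side equals
\[
2\|\mathrm{d}r_h^*\odot\mathrm{d}e_v\|^2+2(\mathrm{d}\partial_t r_h^*\odot\mathrm{d}e_r,\mathrm{d}r_h^*\odot\mathrm{d}e_v).
\]
The second term is bounded by $\|e_r\|_{H^1}\|\mathrm{d}r_h^*\odot \mathrm{d}e_v\|\lesssim h^{-1}\normt{e_r}_h\,\|\mathrm{d}r_h^*\odot\mathrm{d}e_v\|$ via \eqref{eq:inv}; this is the origin of the $h^{-1}$ factor. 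On the right side:
\begin{itemize}
\item[(i)] $J_1=(\partial_t g_h-2\mathrm{d}r_h\odot\mathrm{d}\partial_t r_h,\mathrm{d}e_r\odot\mathrm{d}e_v)$ is at most $(1+\|e_v\|_{W^{1,\infty}})\,\|e_r\|_{W^{1,\infty}}\,\|e_v\|_{H^1}$. By inverse inequalities this is $\lesssim h^{2+\varepsilon}h^{-1}\normt{e_v}_h+h^{2+\varepsilon}h^{-2}h^{-1}\normt{e_v}_h^2$, i.e. $h^{1+\varepsilon}\normt{e_v}_h+h^{-1+\varepsilon}\normt{e_v}_h^2$.
\item[(ii)] $J_2=(\partial_t(\mathrm{d}e_r\odot\mathrm{d}e_r),\mathrm{d}r_h^*\odot\mathrm{d}e_v)=2(\mathrm{d}e_r\odot\mathrm{d}e_v,\mathrm{d}r_h^*\odot\mathrm{d}e_v)$ is at most $h^{2+\varepsilon}\|e_v\|_{H^1}\normt{e_v}_h\lesssim h^{1+\varepsilon}\normt{e_v}_h^2$.
\item[(iii)] The defect term is bounded by $\normt{d_v}_{h,*}\normt{e_v}_h$.
\end{itemize}

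\emph{Main obstacle.} The quadratic term $h^{-1+\varepsilon}\normt{e_v}_h^2$ in (i) is not obviously absorbable. We handle it in one of two ways.

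First option: bound $\|e_v\|_{W^{1,\infty}}$ not by inverse estimates on $e_v$ but through $\partial_t r_h=v_h$ directly. The discrete well-posedness stability of Section~\ref{subsec:wellpose_num} gives $\|v_h\|_{L^2}\lesssim\|\partial_t g_h\|\lesssim 1$. Hence $\|e_v\|_{L^2}\lesssim 1$, and by \eqref{eq:inv} $\|e_v\|_{W^{1,\infty}}\lesssim h^{-2}$. Then
\[
|J_1|\lesssim (1+h^{-2})\,h^{2+\varepsilon}\,h^{-1}\normt{e_v}_h\lesssim h^{-1+\varepsilon}\normt{e_v}_h.
\]
This is still not small, so we refine by keeping $\|e_r\|_{W^{1,\infty}}$ against $\|e_v\|_{W^{1,\infty}}\|e_v\|_{H^1}$ paired more carefully. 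Concretely, we place $\mathrm{d}e_v$ in $L^\infty$ only once and use $\|e_r\|_{H^1}\lesssim h^{-1}\normt{e_r}_h$ for the other factor. This yields $|J_1|\lesssim h^{-1}\normt{e_r}_h\,(1+h^{-2}\|e_v\|_{L^2})\,\|e_v\|_{W^{1,\infty}}$, which after inverse estimates is a multiple of $h^{-1}\normt{e_r}_h\normt{e_v}_h$ up to the smallness factor $h^{-4}\normt{e_r}_h\le h^{\varepsilon}$ from \eqref{eq:t*}.

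Second option (fallback): a continuity argument within $[0,t^*]$ that maintains the bootstrap $\normt{e_v}_h\le h^{2}$. This makes $h^{-1+\varepsilon}\normt{e_v}_h^2\le h^{1+\varepsilon}\normt{e_v}_h$.

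\emph{Combining.} With the quadratic term handled, we apply Young's inequality, add the rigid estimate via \eqref{eq:graphnorm_esti}, and absorb all terms carrying positive powers $h^{\varepsilon}\normt{e_v}_h$ for $h\le h_0$. This gives the claimed bound. The extra $h^{k+1}$ covers geometric consistency errors \eqref{eq:geo_err_vector} that arise from switching between inner products where needed.
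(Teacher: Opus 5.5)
There is a genuine gap in your treatment of $J_1=(\partial_t g_h-2\,\mathrm{d}r_h\odot\mathrm{d}\partial_t r_h,\mathrm{d}e_r\odot\mathrm{d}e_v)_{\Man_h}$. The rigid part, $J_2$, the defect term and the cross term $2(\mathrm{d}\partial_t r_h^*\odot\mathrm{d}e_r,\mathrm{d}r_h^*\odot\mathrm{d}e_v)_{\Man_h}$ are handled essentially as in the paper. You bound the residual $\partial_t g_h-2\,\mathrm{d}r_h\odot\mathrm{d}\partial_t r_h$ in $L^\infty$ by $1+\|e_v\|_{W^{1,\infty}(\Man_h)}$, and both pieces of that bound are fatal.

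\begin{itemize}
\item The $e_v$-piece costs $h^{-2}$ through \eqref{eq:inv}. It produces the non-absorbable term $h^{-1+\varepsilon}\normt{e_v}_h^2$.
\item The $O(1)$-piece produces $h^{1+\varepsilon}\normt{e_v}_h$. After Young's inequality this leaves an additive $O(h^{1+\varepsilon})$ in the bound for $\normt{e_v}_h$. That is incompatible with the statement: at $t=0$ one has $e_r=0$ and the right-hand side is $O(h^k)$.
\item If you instead trade $\|e_r\|_{W^{1,\infty}}$ or $\|\mathrm{d}e_r\|_{L^2}$ for $\normt{e_r}_h$ by inverse estimates, you get $h^{-3}\normt{e_r}_h$, or at best $h^{-2}\normt{e_r}_h$. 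The lemma needs $h^{-1}\normt{e_r}_h$.
\end{itemize}

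Neither of your options repairs this. Your `refinement' actually gives, by \eqref{eq:inv} and \eqref{eq:t*},
\[
h^{-1}\normt{e_r}_h\,\bigl(1+h^{-2}\|e_v\|_{L^2(\Man_h)}\bigr)\,\|e_v\|_{W^{1,\infty}(\Man_h)}\lesssim h^{-3}\normt{e_r}_h\normt{e_v}_h+h^{-1+\varepsilon}\normt{e_v}_h^2,
\]
which has the same two defects. The bootstrap $\normt{e_v}_h\le h^2$ only converts the quadratic term into $h^{1+\varepsilon}\normt{e_v}_h$. That again yields an additive $h^{1+\varepsilon}\gg h^2$, so the bootstrap cannot even be closed.

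The missing idea is that the residual is \emph{consistent}, hence small in $L^2$. Combine \eqref{eq:g_ell}, i.e.\ $\partial_t g^{(-\ell)}=2\,\mathrm{d}r^{(-\ell)}\odot\mathrm{d}\partial_t r^{(-\ell)}$, with \eqref{eq:gh_app} and the approximation property of $r_h^*$. This gives
\[
\partial_t g_h-2\,\mathrm{d}r_h\odot\mathrm{d}\partial_t r_h=-2\big(\mathrm{d}r_h^*\odot\mathrm{d}e_v+\mathrm{d}e_r\odot\mathrm{d}\partial_t r_h^*+\mathrm{d}e_r\odot\mathrm{d}e_v\big)+O(h^k)\quad\text{in }L^2(\Man_h).
\]
The leading term satisfies $\|\mathrm{d}r_h^*\odot\mathrm{d}e_v\|_{L^2(\Man_h)}\le\normt{e_v}_h$ directly from the graph norm, with no inverse inequality. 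Hence the residual is bounded in $L^2$ by $(1+h^{1+\varepsilon})\normt{e_v}_h+h^{-1}\normt{e_r}_h+h^k$. Pair it in $L^2\times L^2$ with $\|\mathrm{d}e_r\odot\mathrm{d}e_v\|_{L^2(\Man_h)}\lesssim\|e_r\|_{W^{1,\infty}(\Man_h)}\|e_v\|_{H^1(\Man_h)}\lesssim h^{1+\varepsilon}\normt{e_v}_h$. This yields
\[
J_1\lesssim h^{1+\varepsilon}\normt{e_v}_h\big(\normt{e_v}_h+h^{-1}\normt{e_r}_h+h^k\big),
\]
which is absorbable and produces exactly the terms $h^{-1}\normt{e_r}_h$ and $h^{k+1}$. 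In particular, the $h^{k+1}$ in the statement comes from $h^{1+\varepsilon}\cdot h^k$ here, not from geometric consistency errors as you suggest.

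A minor point on your rigid part: put $\partial_t r_h^*$ in $L^\infty$ and $e_r$ in $L^2$. Using $\|e_r\|_{L^\infty}$ as written only gives $h^{-1}\normt{e_r}_h$ rather than $\normt{e_r}_h$. This is harmless for the lemma.
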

\begin{proof}
Decomposing 
$ \partial_t( \mathrm{d} r_h^* \odot\mathrm{d} e_r) =  \mathrm{d} r_h^* \odot\mathrm{d} e_v 
+  \mathrm{d} \partial_t r_h^* \odot\mathrm{d} e_r $ and choosing $q_h = e_{v}$ in \eqref{eq:err-a}, 
it holds that
\[ 
\begin{aligned}
    2\|\mathrm{d} e_{v} \odot \mathrm{d} r_h^{*}\|_{L^2(\Man_h)}^2 
&=(\partial_t g_h - 2 \mathrm{d} r_h \odot \mathrm{d} \partial_t r_h, \mathrm{d} e_{r} \odot \mathrm{d} e_{v} )_{\Man_h}
- 2 ( \mathrm{d} e_{r} \odot \mathrm{d} e_{v}, \mathrm{d} r_h^{*} \odot \mathrm{d} e_{v} )_{\Man_h} \\
&\quad  - ( d_v, e_{v} )_{\Man_h}
- 2 ( \mathrm{d} e_{r} \odot \mathrm{d} \partial_t r_h^{*}, \mathrm{d} r_h^{*} \odot \mathrm{d} e_{v})_{\Man_h}\\ 
& =: J_1 + J_2 + J_3 + J_4.
\end{aligned}
\]
For the term $J_1$, we first use \eqref{eq:g_ell} and \eqref{eq:gh_app}, together with the approximation property of $r_h^*$ to deduce that 
\begin{equation}\label{eq:pt_gh-drhdrh} 
\begin{aligned}
   & \| \partial_t g_h - 2 \mathrm{d} r_h \odot \mathrm{d} \partial_t r_h \|_{L^2(\Man_h)} \\ 
 \lesssim &\, \|\mathrm{d}r_h^*\odot\mathrm{d}\partial_t r_h^* - \mathrm{d}r_h\odot\mathrm{d}\partial_t r_h\|_{L^2(\Man_h)} + \| \partial_t g_h -  \partial_t g^{(-\ell)}  \|_{L^2(\Man_h)} \\ 
 & + \| \mathrm{d}r^{(-\ell)}\odot\mathrm{d}\partial_t r^{(-\ell)}
 - \mathrm{d} r_h^* \odot \mathrm{d} \partial_t r_h^*\|_{L^2(\Man_h)} \\ 
\lesssim &\,  \| \mathrm{d} r_h^{*} \odot \mathrm{d} e_{v}  \|_{L^2(\Man_h)}
+ \|\mathrm{d} e_{r} \odot \mathrm{d} e_{v} \|_{L^2(\Man_h)} 
+ \|\mathrm{d} e_{r} \odot \mathrm{d} \partial_t r_h^{*}\|_{L^2(\Man_h)} + h^k \\ 
\lesssim & 
\, \normt{ e_v }_{h} + \| e_r \|_{W^{1,\infty}(\Man_h)} \| e_v \|_{H^1(\Man_h)} 
+ \| e_r \|_{H^1(\Man_h)} + h^k\\
\lesssim & \, 
(1 + h^{-1}\|e_r\|_{W^{1, \infty}(\Man_h)})\normt{ e_v }_{h} + h^{-1} \normt{e_r}_h + h^k 
\quad\quad\quad\quad (\text{by inverse estimate \eqref{eq:inv}})
\\ 
\lesssim &
\, (1 + h^{1+\varepsilon})\normt{ e_v }_{h} + h^{-1} \normt{e_r}_h + h^k,
\qquad\qquad\qquad\qquad\qquad
 (\text{by estimate \eqref{eq:t*:cor}}). 
\end{aligned}
\end{equation} 
Substituting the estimate \eqref{eq:pt_gh-drhdrh}, we obtain the estimate for $J_1$: 
\[ \begin{aligned}
    J_1 &\le  
\|  \mathrm{d} e_{r} \odot \mathrm{d} e_{v} \|_{L^2(\Man_h)}\| \partial_t g_h - 2 \mathrm{d} r_h \odot \mathrm{d} \partial_t r_h \|_{L^2(\Man_h)}\\
&\lesssim \|  e_{r} \|_{W^{1,\infty}(\Man_h)} \| e_v \|_{H^1(\Man_h)}
(\normt{ e_v }_{h} + h^{-1} \normt{e_r}_h + h^k ) \\ 
&\lesssim h^{-1}\|  e_{r} \|_{W^{1,\infty}(\Man_h)} \normt{e_v}_h 
(\normt{ e_v }_{h} + h^{-1} \normt{e_r}_h + h^k)
\qquad \text{(by inverse estimate \eqref{eq:inv})}
\\ 
&\lesssim h^{1+\varepsilon} \normt{e_v}_h
(\normt{ e_v }_{h} + h^{-1} \normt{e_r}_h + h^k )
\qquad\qquad\qquad\qquad \text{(by estimate \eqref{eq:t*:cor})}\\
& \leq (Ch^{1+\varepsilon} + \delta ) \normt{e_v}_h^2 
+ C_{\delta}( h^{2\varepsilon} \normt{e_r}_h^2 + h^{2k+2+2\varepsilon} ),
\end{aligned}
\]
where $\delta>0$ can be chosen arbitrarily small, and $C_{\delta}>0$ is a constant (independent of $h$) depending only on $\delta$. 
Similarly, by the inverse estimate \eqref{eq:inv} and \eqref{eq:t*:cor}, we obtain
\[
J_2 \lesssim
\| e_r \|_{W^{1,\infty}(\Man_h)} \| e_v \|_{H^1(\Man_h)} \normt{e_v}_h
\lesssim h^{1+\varepsilon} \normt{e_v}_h^2.
\]
For the estimates of \(J_3\) and \(J_4\), we have
\begin{subequations}
\begin{align}
J_3 & \leq \delta \normt{e_v}_h^2 + C_{\delta} \normt{d_v}_{h,*}^2, \notag \\
J_4 &\lesssim \|e_r\|_{H^1(\Man_h)}\normt{e_v}_h
\leq \delta \normt{e_v}_h^2 +
C_{\delta}h^{-2}\normt{e_r}_h^2. \notag
\end{align}
\end{subequations}
Collecting the bounds for $J_1,\ldots,J_4$, we have
\begin{equation}\label{eq:devdr}
    \|\mathrm{d} e_{v} \odot \mathrm{d} r_h^{*}\|_{L^2(\Man_h)}^2
\leq 
C_{\delta}\big( 
h^{-2}\normt{e_r}_h^2 + \normt{ d_v }_{h,*}^2 + h^{2(k+1)}\big) 
+ C(\delta + h^{1+\varepsilon}) \normt{e_v}^2_h.
\end{equation}
Taking $\mu_h^* = \Pi_{\RM} e_v = e_{v, \RM}^{(1)} \times r_h^* +  e_{v, \RM}^{(2)}$ in \eqref{eq:err-b}, we obtain 
\[ 
    (e_v, \Pi_{\RM} e_v)_{\Man_h} 
     = - (\partial_t r_h^*,  e_{v, \RM}^{(1)} \times e_r)_{\Man_h} 
    - (e_v, e_{v, \RM}^{(1)} \times e_r) _{\Man_h}
    - (d_{\lambda}, \Pi_{\RM} e_v)_{\Man_h}. 
\]
Since $\Pi_{\RM}: V_h^3 \to \RM[r_h^*]$ is the $L^2$-orthogonal projection, we deduce that
\begin{equation}\label{eq:e_v,RM}
\begin{aligned}
        \|\Pi_{\RM} e_v \|_{L^2(\Man_h)}^2 & \lesssim 
        \| e_v \|_{L^2(\Man_h)} \| e_r \|_{L^2(\Man_h)} + 
        \| e_r \|_{L^\infty(\Man_h)} \| e_v \|_{L^2(\Man_h)}^2\\
        & \quad + \| e_v \|_{L^2(\Man_h)} \| d_{\lambda} \|_{L^2(\Man_h)} \\ 
        & \leq C_{\delta} (\normt{e_r}_h^2 + \| d_{\lambda} \|_{L^2(\Man_h)}^2) + C(\delta + h^{3+ \varepsilon}) \normt{e_v}_h^2,
\end{aligned}
\end{equation}
where we have used  \eqref{eq:t*} and  the inverse estimate  $ \| e_r \|_{L^\infty(\Man_h)} \lesssim h^{-1} \| e_r \|_{L^2(\Man_h)} \lesssim h^{3 + \varepsilon} $.
Combining \eqref{eq:graphnorm_esti},  \eqref{eq:devdr} and \eqref{eq:e_v,RM}, we have 
\[ \begin{aligned}
    \normt{e_v}^2_h &\lesssim 
    \|\Pi_{\RM} e_v\|_{L^2(\Man_h)}^2 +  \|\mathrm{d} e_{v} \odot \mathrm{d} r_h^{*}\|_{L^2(\Man_h)}^2 \\ 
    & \leq C_{\delta}\big( h^{-2}\normt{e_r}_h^2  + \| d_{\lambda} \|_{L^2(\Man_h)}^2 + \normt{d_v}_{h,*}^2 + h^{2(k+1)} \big)
 + C(\delta + h^{1+\varepsilon}) \normt{e_v}_h^2. 
\end{aligned} \]
Noting that, for sufficiently small $h$ and $\delta$, we can choose them so that
$C(\delta + h^{1+\varepsilon}) \leq \tfrac12$, the term
$C(\delta + h^{1+\varepsilon})\,\normt{e_v}_h^2$ can be absorbed into the left-hand side.
This completes the proof of Lemma \ref{lm:esti_ev}.
\end{proof}

With the decomposition \( r_h = e_r + r_h^* \) and \( \partial_t r_h = e_v + \partial_t r_h^* \), we combine Lemma~\ref{lm:esti_ev} with \eqref{eq:t*} and the inverse inequality \eqref{eq:inv} to conclude that the numerical solution is bounded for sufficiently small \(h\):
\begin{equation}\label{eq:boundedness}
\max\big\{ \|r_h(t)\|_{W^{1,\infty}(\Man_h)}, \|\partial_t r_h(t)\|_{W^{1,\infty}(\Man_h)}\big\} \leq C, 
\quad t \in [0, t^*].
\end{equation}

\subsection{Stability estimates}
This subsection establishes the stability estimates for the error equation \eqref{eq:err} by 
estimating $\|\mathrm{d} e_{r} \odot \mathrm{d} r_h^*\|_{L^2(\Man_h)}$ and  $\|\Pi_{\RM} e_r\|_{L^2(\Man_h)}$ individually.

\begin{proposition}\label{prop:esti_drder}
    Under the assumptions of Theorem \ref{thm:err}, there exists a constant $h_0 > 0$ such that the following estimate holds uniformly for $0<h\leq h_0$ and $t\in[0,t^{*}]${\rm:}
    \begin{equation}\label{eq:esti_drder} 
        \|(\mathrm{d} e_{r} \odot \mathrm{d} r_h^*)(t)\|^2_{L^2(\Man_h)} 
     \lesssim 
    \int_0^t \big(\normt{e_r(s)}^2_h + \| d_\lambda(s) \|_{L^2(\Man_h)}^2 + \normt{d_v(s)}_{h,*}^2 \big)\mathrm{d}s
    + h^{2k+2}.
    \end{equation} 

\end{proposition}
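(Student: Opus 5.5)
The natural route is an energy estimate: test the error equation \eqref{eq:err-a} with $q_h = e_r$ and integrate in time. The goal is to turn the left-hand side into the time derivative of $\|\mathrm{d} r_h^*\odot \mathrm{d} e_r\|^2_{L^2(\Man_h)}$, up to lower-order terms. Write $E := \mathrm{d} r_h^*\odot\mathrm{d} e_r$. Then
\[
2(\partial_t E, E)_{\Man_h} = \frac{\mathrm d}{\mathrm dt}\|E\|^2_{L^2(\Man_h)},
\]
because the reference metric $g_{\Man_h}$ defining $(\cdot,\cdot)_{\Man_h}$ is independent of $t$. Since $E(0)=0$, integrating over $[0,t]$ gives
\[
\|E(t)\|^2_{L^2(\Man_h)} = \int_0^t \Big[ K_1 + K_2 + K_3 \Big]\,\mathrm ds .
\]
Here $K_1 = (\partial_t g_h - 2\,\mathrm{d} r_h\odot\mathrm{d}\partial_t r_h,\ \mathrm{d} e_r\odot\mathrm{d} e_r)_{\Man_h}$, $K_2 = -(\partial_t(\mathrm{d} e_r\odot\mathrm{d} e_r),\ E)_{\Man_h}$, and $K_3 = -(d_v,e_r)_{\Man_h}$.

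The defect term is handled directly. We have $|K_3|\le \normt{d_v}_{h,*}\normt{e_r}_h \le \tfrac12(\normt{d_v}_{h,*}^2+\normt{e_r}_h^2)$, which is exactly of the form on the right of \eqref{eq:esti_drder}. The contribution of $d_\lambda$ will enter only through the use of Lemma~\ref{lm:esti_ev} below.

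The nonlinear terms $K_1,K_2$ are the main obstacle. They are quadratic in $\mathrm{d} e_r$, and $K_2$ contains $\mathrm{d} e_v$. The Korn-type coercivity controls only $\normt{\cdot}_h$, which gives no $H^1$ control in the normal direction. So the $H^1$ norms have to be converted to $L^2$ via the inverse inequality \eqref{eq:inv}, and the bootstrap bound \eqref{eq:t*} then supplies the small factors.

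For $K_1$, I use \eqref{eq:pt_gh-drhdrh} together with Lemma~\ref{lm:esti_ev} to bound the first factor by $C(1+h^{-1}\normt{e_r}_h)$. Using \eqref{eq:boundedness} it is in fact simply $O(1)$. The second factor satisfies
\[
\|\mathrm{d} e_r\odot\mathrm{d} e_r\|_{L^2}\le \|e_r\|_{W^{1,\infty}}\|e_r\|_{H^1}\lesssim h^{-3}\normt{e_r}_h^2 \lesssim h^{1+\varepsilon}\normt{e_r}_h ,
\]
where the last step uses \eqref{eq:t*}. Hence $|K_1|\lesssim \normt{e_r}_h^2$ after Young's inequality.

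For $K_2=-2(\mathrm{d} e_r\odot\mathrm{d} e_v, E)$, I bound it by $\|e_r\|_{W^{1,\infty}}\|e_v\|_{H^1}\|E\|$. Applying \eqref{eq:inv} to both factors gives
\[
|K_2| \lesssim h^{-2}\normt{e_r}_h \cdot h^{-1}\normt{e_v}_h \cdot \normt{e_r}_h .
\]
I then insert Lemma~\ref{lm:esti_ev}, $\normt{e_v}_h\lesssim h^{-1}\normt{e_r}_h+\normt{d_v}_{h,*}+\|d_\lambda\|+h^{k+1}$. The worst piece is $h^{-4}\normt{e_r}_h^3$, and \eqref{eq:t*} gives $h^{-4}\normt{e_r}_h\le h^{\varepsilon}\le 1$. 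This is precisely where the exponent $4+\varepsilon$ is needed. The remaining pieces look like $h^{-3}\normt{e_r}_h^2(\normt{d_v}_{h,*}+\|d_\lambda\|+h^{k+1})$. Since $h^{-3}\normt{e_r}_h\le h^{1+\varepsilon}$, Young's inequality bounds them by $C\big(\normt{e_r}_h^2+\normt{d_v}_{h,*}^2+\|d_\lambda\|^2+h^{2k+2}\big)$.

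If the crude bound on $K_2$ turns out too lossy, an alternative is to rewrite $K_2$ as $-\frac{\mathrm d}{\mathrm dt}(\mathrm{d} e_r\odot\mathrm{d} e_r, E)$ plus terms without $\mathrm{d} e_v$ falling on the quadratic factor. The boundary term at time $t$ is then absorbed using $\|\mathrm{d} e_r\odot\mathrm{d} e_r\|\lesssim h^{1+\varepsilon}\normt{e_r}_h$. Collecting all the bounds and integrating, with the $h^{2k+2}$ terms accumulating only a factor $T$, yields \eqref{eq:esti_drder}.
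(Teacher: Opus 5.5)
Your overall strategy is the paper's. You test \eqref{eq:err-a} with $q_h=e_r$, split into $K_1,K_2,K_3$, convert $W^{1,\infty}$ and $H^1$ norms via \eqref{eq:inv}, use $h^{-4}\normt{e_r}_h\le h^{\varepsilon}$ from \eqref{eq:t*}, and remove $\normt{e_v}_h$ with Lemma~\ref{lm:esti_ev}. Your handling of $K_2$ and $K_3$ is correct.

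The gap is in $K_1$. You bound the residual factor $\|\partial_t g_h-2\,\mathrm{d}r_h\odot\mathrm{d}\partial_t r_h\|_{L^2(\Man_h)}$ by $O(1)$, via \eqref{eq:boundedness}. With that, all you obtain is $|K_1|\lesssim h^{-3}\normt{e_r}_h^2\le h^{1+\varepsilon}\normt{e_r}_h$. This is not $\lesssim\normt{e_r}_h^2$ with an $h$-independent constant, so the claim ``$|K_1|\lesssim\normt{e_r}_h^2$ after Young's inequality'' does not follow. Young leaves a source term $Ch^{2+2\varepsilon}$; alternatively, keeping the bound quadratic puts a factor $h^{-3}$ into the Gr\"onwall exponent. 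Neither fits the right-hand side of \eqref{eq:esti_drder}. Fed into the final Gr\"onwall step, the first gives only $\normt{e_r}_h\lesssim h^{1+\varepsilon}$, which neither yields the rate $h^k$ nor closes the bootstrap $\normt{e_r}_h\le h^{4+\varepsilon}$. Your intermediate bound $C(1+h^{-1}\normt{e_r}_h)$ already contains a spurious additive $1$.

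The missing point is that this factor is a consistency residual and is itself small. The exact solution satisfies $\partial_t g^{(-\ell)}=2\,\mathrm{d}r^{(-\ell)}\odot\mathrm{d}\partial_t r^{(-\ell)}$ by \eqref{eq:g_ell}, so \eqref{eq:pt_gh-drhdrh} gives a bound $\lesssim\normt{e_v}_h+h^{-1}\normt{e_r}_h+h^k$, with no $O(1)$ term. You must keep this smallness. Combined with $\|\mathrm{d}e_r\odot\mathrm{d}e_r\|_{L^2(\Man_h)}\lesssim h^{-3}\normt{e_r}_h^2$, it gives
\[
|K_1|\lesssim \bigl(h^{-4}\normt{e_r}_h\bigr)\,\normt{e_r}_h\,\bigl(h\normt{e_v}_h+\normt{e_r}_h+h^{k+1}\bigr)\lesssim \normt{e_r}_h^2+h^2\normt{e_v}_h^2+h^{2k+2}.
\]
This has the same structure as your $K_2$ bound. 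Lemma~\ref{lm:esti_ev} then converts $h^2\normt{e_v}_h^2$ into $\normt{e_r}_h^2+\normt{d_v}_{h,*}^2+\|d_\lambda\|_{L^2(\Man_h)}^2+h^{2k+2}$. With this correction your argument coincides with the paper's proof, and the alternative time-integration rewriting of $K_2$ is unnecessary.
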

\begin{proof}

With \( q_h = e_{r} \) in \eqref{eq:err-a}, we have  
\[
\begin{aligned}
     \frac{\mathrm{d}}{\mathrm{d}t}\|\mathrm{d} e_{r} \odot \mathrm{d} r_h^*\|_{L^2(\Man_h)}^2 
& = ( \partial_t g_h - 2 \mathrm{d}r_h \odot \mathrm{d} \partial_t r_h, \mathrm{d}e_r \odot \mathrm{d}e_{r}   )_{\Man_h} 
\\
& \quad -  2 ( \mathrm{d}e_r \odot \mathrm{d}e_v,\ \mathrm{d}r_h^* \odot \mathrm{d}e_{r}  )_{\Man_h} 
- (d_v, e_{r})_{\Man_h}  
\\
&=: K_1 + K_2 + K_3. 
\end{aligned}\]
For the term \(K_1\), using \eqref{eq:pt_gh-drhdrh}, we obtain
\[
\begin{aligned}
K_1 &
\leq \|\mathrm{d}e_r \odot \mathrm{d}e_{r} \|_{L^2(\Man_h)}
\|  \partial_t g_h - 2 \mathrm{d}r_h \odot \mathrm{d} \partial_t r_h \|_{L^2(\Man_h)}
\\
& \lesssim \| e_r  \|_{W^{1, \infty}(\Man_h)} \| e_r \|_{H^1(\Man_h)}
\bigl( \normt{e_v}_h + h^{-1}\normt{e_r}_h + h^k \bigr)
\\
& \lesssim (h^{-4} \normt{e_r}_h )\normt{e_r}_h
\bigl( h\normt{e_v}_h + \normt{e_r}_h + h^{k+1} \bigr)
\qquad \text{(by the inverse estimate \eqref{eq:inv})}
\\
&\lesssim \normt{e_r}_h^2 + h^2\normt{ e_v}_h^2 + h^{2k+2},
\qquad\qquad\qquad\qquad\qquad \text{(by estimate \eqref{eq:t*})}
\end{aligned}
\]
where, in the last step, we use \eqref{eq:t*} to bound \(h^{-4}\normt{e_r}_h \le h^{\varepsilon}\) .
Similarly, using the inverse estimate \eqref{eq:inv} and \eqref{eq:t*}, we obtain the estimate for \(K_2\):
\[
K_2 \leq \| e_r \|_{W^{1,\infty}(\Man_h)} \| e_v \|_{H^1(\Man_h)} \normt{e_r}_h
\lesssim (h^{-4}\normt{e_r}_h) (h\normt{e_v}_h) \normt{e_r}_h
\lesssim h^2\normt{ e_v }_h^2 + \normt{ e_r }_h^2,
\]
where, again in the last step, we use \eqref{eq:t*} to bound \(h^{-4}\normt{e_r}_h \le h^{\varepsilon}\).
For the estimate of $K_3$, we have 
\[ 
    K_3 \leq \normt{ d_v }_{h,*}^2 + \normt{e_r}_h^2.
 \]
Combining the estimates of $K_1, K_2, K_3$ and applying Lemma \ref{lm:esti_ev}, we obtain 
\[ \begin{aligned}
  \frac{\mathrm{d}}{\mathrm{d}t}\|\mathrm{d} e_{r} \odot \mathrm{d} r_h^*\|_{L^2(\Man_h)}^2  
  &\lesssim    \normt{e_r}_h^2 + h^2\normt{ e_v}_h^2 + \normt{ d_v }_{h,*}^2 + h^{2k+2}  \\ 
  &\lesssim   
   \normt{e_r}_h^2 + \normt{ d_v }_{h,*}^2 + \|d_\lambda \|_{L^2(\Man_h)}^2 + h^{2k+2}.
\end{aligned} \]
That concludes the proof of Proposition \ref{prop:esti_drder} by integration from $0$ to $t$.
\end{proof}

We now turn to the estimate of $\| \Pi_{\RM} e_r\|_{L^2(\Man_h)}$ by considering the error equation in \eqref{eq:err-b}, which can be rewritten as follows:  
\begin{equation}\label{eq:err-b-re} 
\begin{aligned}
    ( \partial_t (\Pi_{\RM} e_r), \mu_h^* )_{\Man_h} = (\partial_t (\Pi_{\RM} e_r) - \partial_t e_r, \mu_h^* )_{\Man_h} 
    - (\partial_t r_h, \mu_h^{(1)} \times e_r)_{\Man_h} - (d_{\lambda}, \mu_h^*)_{\Man_h},
\end{aligned}
\end{equation} 
for all $ \mu_h^* = \mu_h^{(1)} \times r_h^* + \mu_h^{(2)} \in \RM[r_h^*] $ with $\mu_h^{(1)}, \mu_h^{(2)} \in \mathbb{R}^3$.

\begin{proposition}\label{prop:esti_eRM}
    Under the assumptions of Theorem \ref{thm:err}, there exists a constant $h_0 > 0$ such that the following stability result holds uniformly for $0<h\leq h_0$ and $t\in[0,t^{*}]${\rm:}
    \begin{equation}\label{eq:esti_eRM} 
      \|  (\Pi_{\RM} e_r)(t)\|_{L^2(\Man_h)}^2  \lesssim \int_0^t \big(\|e_r(s)\|_{L^2(\Man_h)}^2 + \|d_\lambda(s)\|_{L^2(\Man_h)}^2\big)\mathrm{d}s.
    \end{equation} 
\end{proposition}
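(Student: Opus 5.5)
The plan is to test the rewritten error equation \eqref{eq:err-b-re} with $\mu_h^*=\Pi_{\RM} e_r\in\RM[r_h^*]$, writing $\Pi_{\RM} e_r = e_{r,\RM}^{(1)}\times r_h^* + e_{r,\RM}^{(2)}$. This gives
\[
\tfrac12\frac{\mathrm d}{\mathrm dt}\|\Pi_{\RM} e_r\|_{L^2(\Man_h)}^2
= (\partial_t(\Pi_{\RM} e_r)-\partial_t e_r,\Pi_{\RM} e_r)_{\Man_h}
-(\partial_t r_h, e_{r,\RM}^{(1)}\times e_r)_{\Man_h}
-(d_\lambda,\Pi_{\RM} e_r)_{\Man_h}.
\]
Here $(\cdot,\cdot)_{\Man_h}$ uses the time-independent metric $g_{\Man_h}$, so the time derivative passes inside the inner product. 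The last two terms are routine. By the uniform bound \eqref{eq:boundedness} on $\partial_t r_h$, and because $|e_{r,\RM}^{(1)}|\lesssim\|\Pi_{\RM} e_r\|_{L^2(\Man_h)}$ (norm equivalence on the six-dimensional space $\RM[r_h^*]$, uniform in $h$ since $r_h^*$ is close to $r$), the second term is bounded by $\|e_r\|_{L^2}\|\Pi_{\RM} e_r\|_{L^2}$. The third is at most $\|d_\lambda\|_{L^2}\|\Pi_{\RM}e_r\|_{L^2}$.

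The main step is the commutator term $(\partial_t(\Pi_{\RM} e_r)-\partial_t e_r,\Pi_{\RM} e_r)_{\Man_h}$. Since $\Pi_{\RM}$ is the orthogonal projection, $(\Pi_{\RM}\partial_t e_r - \partial_t e_r,\mu)=0$ for every $\mu\in\RM[r_h^*]$. The term therefore equals $([\partial_t,\Pi_{\RM}]e_r,\Pi_{\RM} e_r)_{\Man_h}$, and it reduces to bounding the commutator that arises because the space $\RM[r_h^*(t)]$ moves in time. To compute it, I would differentiate the defining relation \eqref{eq:Pi_RM} in time:
\[
(\partial_t(\Pi_{\RM}e_r),\mu)+(\Pi_{\RM}e_r,\alpha\times\partial_t r_h^*)=(\partial_t e_r,\mu)+(e_r,\alpha\times\partial_t r_h^*)
\]
for $\mu=\alpha\times r_h^*+\beta$. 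Testing with $\mu=\Pi_{\RM}e_r$ then gives
\[
([\partial_t,\Pi_{\RM}]e_r,\Pi_{\RM}e_r) = (e_r-\Pi_{\RM}e_r,\ e_{r,\RM}^{(1)}\times\partial_t r_h^*),
\]
which is bounded by $C\|e_r\|_{L^2}\|\Pi_{\RM}e_r\|_{L^2}$. This uses $\|\partial_t r_h^*\|_{L^\infty}\lesssim1$, $\|e_r-\Pi_{\RM}e_r\|\le\|e_r\|$, and the coefficient bound above.

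Combining the three bounds with Young's inequality gives
\[
\frac{\mathrm d}{\mathrm dt}\|\Pi_{\RM}e_r\|^2\lesssim \|e_r\|^2+\|d_\lambda\|^2+\|\Pi_{\RM}e_r\|^2\lesssim\|e_r\|^2+\|d_\lambda\|^2,
\]
where the last step uses $\|\Pi_{\RM}e_r\|\le\|e_r\|$. Integrating from $0$ to $t$ with $e_r(0)=0$ yields \eqref{eq:esti_eRM}. No Gr\"onwall step is needed here, because the right-hand side is kept in terms of $\|e_r\|$.

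The main obstacle I expect is justifying the uniform-in-$h$ and uniform-in-$t$ equivalence $|\alpha|+|\beta|\eqsim\|\alpha\times r_h^*+\beta\|_{L^2(\Man_h)}$. I would transfer it from the corresponding equivalence for $\RM[r]$, which holds because $r$ is an embedding not contained in a line, by means of the $W^{1,\infty}$ closeness of $r_h^*$ to $r$ together with \eqref{eq:geo_err_vector}.
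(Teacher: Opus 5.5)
Your proof is correct and follows essentially the paper's route. You test \eqref{eq:err-b-re} with $\Pi_{\RM}e_r$, bound the last two terms using \eqref{eq:boundedness} and norm equivalence on $\RM[r_h^*]$, and handle the moving-subspace term by differentiating the defining relation \eqref{eq:Pi_RM} in time. The differences are minor: the paper bounds $\|\Pi_{\RM}(\partial_t(\Pi_{\RM}e_r)-\partial_t e_r)\|_{L^2(\Man_h)}$ through a supremum over $\alpha,\beta$, whereas you insert $\mu=\Pi_{\RM}e_r$ directly into the differentiated identity, and you also justify the uniform-in-$(h,t)$ norm equivalence on $\RM[r_h^*]$, which the paper takes for granted.
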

\begin{proof}
    Taking $\mu_h^* =  \Pi_{\RM} e_r =   e_{r, \RM}^{(1)} \times r_h^* +  e_{r, \RM}^{(2)}$ in \eqref{eq:err-b-re} yields 
\[ \begin{aligned}
        \frac{1}{2} \frac{\mathrm{d}}{\mathrm{d}t} 
    \| \Pi_{\RM} e_r\|_{L^2(\Man_h)}^2 &= (\Pi_{\RM}(\partial_t (\Pi_{\RM} e_r) - \partial_t e_r), \Pi_{\RM}e_r )_{\Man_h}
    - (\partial_t r_h,  e_{r,\RM}^{(1)} \times e_r)_{\Man_h} \\
    &\quad - (d_{\lambda},\Pi_{\RM}e_r )_{\Man_h}.
\end{aligned} \]
Invoking the boundedness \eqref{eq:boundedness}, we obtain 
\[   \frac{\mathrm{d}}{\mathrm{d}t} 
    \| \Pi_{\RM} e_r\|_{L^2(\Man_h)}^2
    \lesssim \| e_r \|_{L^2(\Man_h)}^2 + \|d_\lambda\|_{L^2(\Man_h)}^2 
    + \|\Pi_{\RM}(\partial_t (\Pi_{\RM} e_r) -\partial_t e_r)\|_{L^2(\Man_h)}^2.\]
Now, it remains to estimate the term $\| \Pi_{\RM}(\partial_t (\Pi_{\RM} e_r) - \partial_t e_r)\|_{L^2(\Man_h)}^2$. 
For this, we recall the definition of \( \Pi_{\RM} \) in \eqref{eq:Pi_RM} to derive
\[ ( \Pi_{\RM} e_r - e_r, \alpha \times r_h^* + \beta )_{\Man_h} = 0, \quad \forall \alpha,\beta \in \mathbb{R}^3.  \]    
Taking the time derivative gives 
\[ ( \partial_t(\Pi_{\mathrm{RM}} e_r) - \partial_t
e_r,\alpha \times r_h^* + \beta)_{\Man_h}
= - ( \Pi_{\mathrm{RM}} e_r - e_r,\alpha \times \partial_t r_h^* )_{\Man_h} , \quad \forall \alpha,\beta \in \mathbb{R}^3.  \]
Next, the norm equivalence in the finite dimensional space $\RM[r_h^*]$ leads to 
\[ 
\begin{aligned}
    \| \Pi_{\RM}(\partial_t (\Pi_{\RM} e_r) - \partial_te_r)\|_{L^2(\Man_h)} & \eqsim
\sup_{\alpha, \beta \in \mathbb{R}^3} \frac{(\Pi_{\RM}(\partial_t (\Pi_{\RM} e_r) -\partial_t e_r), \alpha \times r_h^* + \beta)_{\Man_h}}{|\alpha| + |\beta|}  \\ 
& = 
\sup_{\alpha, \beta \in \mathbb{R}^3} \frac{(\partial_t (\Pi_{\RM} e_r) -\partial_t e_r, \alpha \times r_h^* + \beta)_{\Man_h}}{|\alpha| + |\beta|}  \\ 
& = 
\sup_{\alpha, \beta \in \mathbb{R}^3} \frac{(  e_r - \Pi_{\mathrm{RM}}e_r,\ \alpha \times \partial_t r_h^* )_{\Man_h}}{|\alpha| + |\beta|} \\
&\le \sup_{\alpha \in \mathbb{R}^3} \frac{(  e_r - \Pi_{\mathrm{RM}}e_r,\ \alpha \times \partial_t r_h^* )_{\Man_h}}{|\alpha| }  \\
&
\lesssim \|e_r\|_{L^2(\Man_h)}.
\end{aligned}
\]
This concludes the proof of \eqref{eq:esti_eRM}. 
\end{proof}

\subsection{Error estimate}
We are now ready to prove Theorem \ref{thm:err} by combining the stability estimates from Proposition \ref{prop:esti_drder} and Proposition \ref{prop:esti_eRM}. 

\begin{proof}[Proof of Theorem \ref{thm:err}]  
Combining \eqref{eq:esti_drder}, \eqref{eq:esti_eRM} and \eqref{eq:graphnorm_esti}, we have, for $t \in [0,t^*]$ 
\[ \begin{aligned}
    \normt{e_r(t)}^2_h & \lesssim 
     \|  (\Pi_{\RM} e_r)(t)\|_{L^2(\Man_h)}^2 + \|(\mathrm{d} e_{r} \odot \mathrm{d} r_h^*)(t)\|_{L^2(\Man_h)}^2 \\ 
    & \lesssim \int_0^t \big( 
\normt{e_r(s)}^2_h + \| d_\lambda(s) \|_{L^2(\Man_h)}^2 + \normt{d_v(s)}_{h,*}^2 \big)\mathrm{d}s + h^{2k+2}. 
\end{aligned} \]
By Gr\"onwall's inequality and Lemma \ref{lm:def-esti} (Defect estimates), we obtain
\begin{equation}\label{eq:error-estimate} 
\normt{e_r(t)}_h \lesssim 
 \Big(\int_0^t \big( 
 \| d_\lambda(s) \|_{L^2(\Man_h)}^2 + \normt{d_v(s)}_{h,*}^2 \big)\mathrm{d}s
+h^{2k+2}\Big)^{\frac{1}{2}} \lesssim h^{k}, \quad t \in [0,t^*].
\end{equation} 
Moreover, by applying the norm equivalence~\eqref{eq:norm-equi}, 
the inverse estimate~\eqref{eq:inv}, 
and the approximation property of $r_h^*$, 
we obtain that
\[
\begin{aligned}
\| r_h^{(\ell)}(t) - r(t) \|_{W^{1,\infty}(\Man)}
&\lesssim 
h^{-2}\| e_r^{(\ell)}(t) \|_{L^2(\Man)}
+ \| (r_h^*)^{(\ell)}(t) - r(t) \|_{W^{1,\infty}(\Man)} \\
&\lesssim 
h^{-2} \| e_r(t) \|_{L^2(\Man_h)}
+ h^{k} \\
&\le 
h^{-2}\normt{e_r(t)}_h
+ h^{k}
\lesssim h^{k-2},
\qquad t \in [0,t^*].
\end{aligned}
\]
Since \(k \ge 5\), for sufficiently small \(h\), the above estimates imply that, for all \(t \in [0, t^*]\),
\[ \normt{e_r(t)}_h \lesssim h^k < h^{4 + \varepsilon}, \quad \text{and} \quad \|r_h^{(\ell)}(t) - r(t)\|_{W^{1, \infty}(\Man)} \lesssim h^{k-2} < h^{1+\varepsilon}. \]
This implies that the condition \eqref{eq:dis_Korn_cond} in Lemma~\ref{lm:dis_Korn} is satisfied by \(r_h(t)\)
for all \(t \in [0,t^*]\).
By the discussion in Section~\ref{subsec:wellpose_num}, this ensures that the solution of the finite element scheme
\eqref{eq:num} can be extended continuously to an interval strictly larger than \([0,t^*]\). 
Hence, there exists \(\delta > 0\) such that the finite element solution of \eqref{eq:num} still exists and satisfies the estimate \eqref{eq:t*} on a larger interval \([0, t^* + \delta]\).
Since \(t^* \in (0, T]\) is the supremum of times for which \eqref{eq:t*} holds on \([0, t^*]\), it follows that \(t^* = T\), and therefore $ r_h \in C^1([0,T]; V_h^3) $ and the error estimate \eqref{eq:error-estimate} is valid for all \(t \in [0, T]\).

Finally, invoking \eqref{eq:norm-equi}, the boundedness \eqref{eq:boundedness} together with the approximation property of $r_h^*$, we deduce that 
\[ 
\begin{aligned}
    & \quad \normt{r(t) - r_h^{(\ell)}(t) }_{\Man} \\
    &=\big(\| r - r_h^{(\ell)} \|_{L^2(\Man)}^2 + 
    \| \mathrm{d} r \odot \mathrm{d} (r - r_h^{(\ell)}) \|_{L^2(\Man)}^2\big)^{\frac{1}{2}} \\
    & \eqsim \| r^{(-\ell)} - r_h \|_{L^2(\Man_h)} 
    + \| \mathrm{d} r^{(-\ell)} \odot \mathrm{d} (r^{(-\ell)} - r_h) \|_{L^2(\Man_h)} \\ 
    & \leq\normt{e_r}_h + 
    \| r^{(-\ell)} - r_h^* \|_{L^2(\Man_h)}  + 
    \| \mathrm{d} r^{(-\ell)} \odot \mathrm{d} (r^{(-\ell)} - r_h) -\mathrm{d} r_h^* \odot \mathrm{d} (r_h^* - r_h)  \|_{L^2(\Man_h)} \\ 
    & \lesssim h^k,
\end{aligned}
\]
for all $t \in [0,T]$.
That concludes the proof of Theorem \ref{thm:err}. 
\end{proof}

\section{Numerical experiments}\label{sec:numerical}

In this section, we present numerical examples to illustrate the convergence of the proposed scheme in \eqref{eq:num}, as well as the simulation of isometric embeddings of evolving metrics. 
In all the numerical examples, a 3-step linearly semi-implicit backward differentiation formula (BDF) is used for time discretization, with a sufficiently small stepsize to guarantee that the errors from time discretization are negligibly small compared with the errors from spatial discretization. At each time level, only a linear system for  
$v_h$ is solved, with $r_h$ being expressed in terms of $v_h$ using the BDF method for \eqref{eq:num-v}. 

The numerical experiments presented below are performed using the open-source finite element library NGSolve~\cite{schoberl2014c++}.

\begin{example}[Convergence rates for isometric embedding]
    \upshape
In the first example, we test the convergence rates of our numerical scheme for computing the isometric embedding of an evolving metric. Let the manifold \( \Man \) be the ellipsoid defined by
\[
    \Man = \left\{ \left( \begin{array}{c}
        \frac{1}{2}\sin \varphi \cos \theta \\[5pt]
        \frac{1}{2} \sin \varphi \sin \theta \\[5pt]
        \cos \varphi
    \end{array} \right) : \quad \theta \in [0, 2\pi), \quad \varphi \in [0, \pi] \right\},
\]
and let \( g(t) \) be the evolving metric on \( \Man \), with \( t \in [0,1] \), defined as
\[
    g(t) = \mathrm{d} r(t) \odot \mathrm{d} r(t),
\]
where the exact smooth embedding \( r(t): \Man \rightarrow \mathbb{R}^3 \) is given by
\begin{equation}\label{eq:exa_emb}
    p = \left( \begin{array}{c}
        x \\[1pt]
        y \\[1pt]
        z
    \end{array} \right) \in \Man \mapsto
    r(t, p) = \left( \begin{array}{c}
        (1 - t)x + \frac{t}{2} x \\[1pt]
        (1 - t)y + \frac{t}{2} y \\[1pt]
        (1 - t)z + \frac{t}{3} z
    \end{array} \right), \quad t \in [0,1].
\end{equation}
It is easy to see that \(g(t)\) corresponds to a smooth deformation between ellipsoidal metrics.
In particular, the Gaussian curvature of \(g(t)\) remains strictly positive for all \(t\in[0,1]\).
By direct calculation, the embedding \(r(t)\) in \eqref{eq:exa_emb} yields an exact solution of \eqref{eq:vel}.

The errors of the numerical solutions are measured in the discrete graph norm \eqref{eq:graph_norm} of the error function \( e_r = r_h - r_h^* \) at time \( T = 0.1 \), 
and these are presented in Figure \ref{fig:rate} for several different mesh sizes \( h = 0.7, 0.6, 0.5, 0.4, 0.3, 0.25 \). 
The numerical results in Figure \ref{fig:rate} show that the errors of the numerical solutions scale approximately as \( O(h^{k}) \), with \( k = 5, 6 \), 
which is consistent with the theoretical results established in Theorems~\ref{thm:err}.

    \begin{figure}[htbp]
        \centering
        \begin{subfigure}{0.47\textwidth}
            \includegraphics[width=\textwidth]{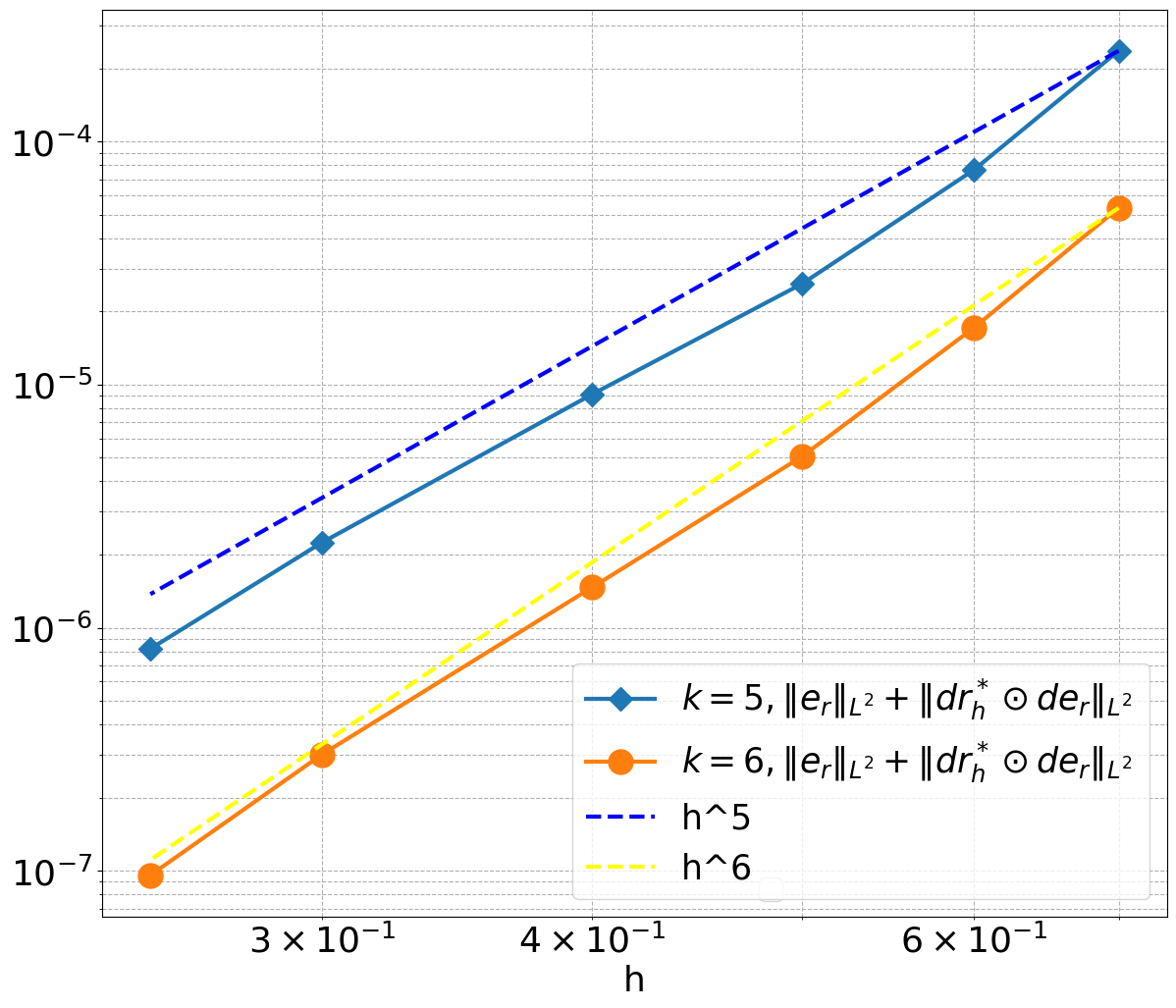}
        \end{subfigure}
\caption{Errors and convergence rate of the numerical scheme in \eqref{eq:num}}
            \label{fig:rate}
    \end{figure}

\end{example}
\begin{example}[Isometric embedding of a revolution metric]

In this example, we simulate the isometric embedding of revolution metrics on the sphere \( S^2 \). 
Consider a one-dimensional curve lying in the \( x \)-\( z \) plane, given by
\[
    \gamma(t) = \{ (x(s,t), 0, z(s,t)) : s \in [0,\pi] \}.
\]
By revolving this curve around the \( z \)-axis through an angle of \( 2\pi \), we obtain a surface of revolution:
\[
    \Gamma(t) = \{ r(t,s,\theta) = (x(s,t)\cos\theta,\, x(s,t)\sin\theta,\, z(s,t)) : 
    s \in [0,\pi],\; \theta \in [0,2\pi) \}.
\]
The tangent vectors of \( \Gamma(t) \) are given by
\[
\begin{aligned}
    \partial_s r(t,s,\theta) &= (\partial_s x(s,t)\cos\theta,\, \partial_s x(s,t)\sin\theta,\, \partial_s z(s,t)), \\
    \partial_\theta r(t,s,\theta) &= (-x(s,t)\sin\theta,\, x(s,t)\cos\theta,\, 0).
\end{aligned}
\]
If we use \( (s, \theta) \in [0, \pi] \times [0, 2\pi) \) to denote the polar coordinates on \( S^2 \), the \emph{revolution metric} on \( S^2 \) is given by
\begin{equation}\label{eq:rev_met}
\begin{aligned}
    g(t,s,\theta) 
    &= (\partial_s r \cdot \partial_s r)\, \mathrm{d}s^2 
     + (\partial_\theta r \cdot \partial_\theta r)\, \mathrm{d}\theta^2 \\
    &= \big((\partial_s x(s,t))^2 + (\partial_s z(s,t))^2\big)\, \mathrm{d}s^2
     + (x(s,t))^2\, \mathrm{d}\theta^2.
\end{aligned}
\end{equation}
It is easy to see that the revolution metric \( g(t) \) has strictly positive Gaussian curvature, provided that the corresponding generating curve \( \gamma(t) \) is strictly convex.

In this example, we choose the generating curve defined by
\[
    x(s,t) = \sin(s)\left( (1 - 0.32t) + 0.48t( \cos^2(s) - 1 )^2 \right),
    \qquad
    z(s,t) = \cos(s),
    \qquad s \in [0,\pi].
\]
Note that when \( t = 0 \), the surface \( \Gamma(0) \) coincides with the unit sphere \( S^2 \), and when \( t = 1 \), the surface \( \Gamma(1) \) is a surface of revolution, with the Gaussian curvature varying from \( \kappa_{\rm min} = 0.055 \) to \( \kappa_{\rm max} = 4.68 \).
We discretize the isometric embedding using the scheme \eqref{eq:num}, with finite elements of degree \( k = 5 \) and mesh size \( h = 0.35 \).
The numerical simulation of the isometric embedding corresponding to the revolution metric \eqref{eq:rev_met} is shown in Figure \ref{fig:rev_evolution}.
As \( t \) varies from \( 0 \) to \( 1 \), the surface gradually deforms from the sphere to the corresponding surface of revolution, accurately reflecting the geometric change.

\begin{figure}[htbp]
    \centering
    \begin{subfigure}[t]{0.3\textwidth}
        \centering
        \includegraphics[width=\textwidth]{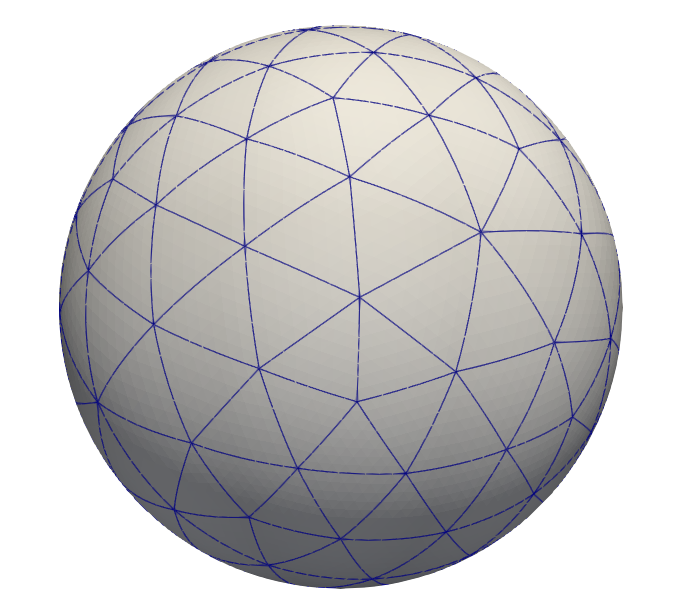}
        \caption{$t = 0$}
    \end{subfigure}

    \vspace{0.8em} 

    \begin{subfigure}[t]{0.3\textwidth}
        \centering
        \includegraphics[width=\textwidth]{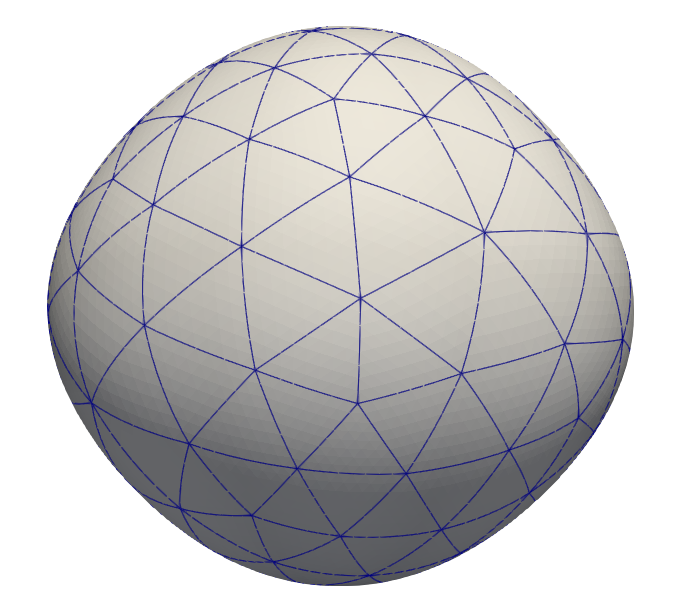}
        \caption{$t = 0.25$}
    \end{subfigure}
    \hfill
    \begin{subfigure}[t]{0.3\textwidth}
        \centering
        \includegraphics[width=\textwidth]{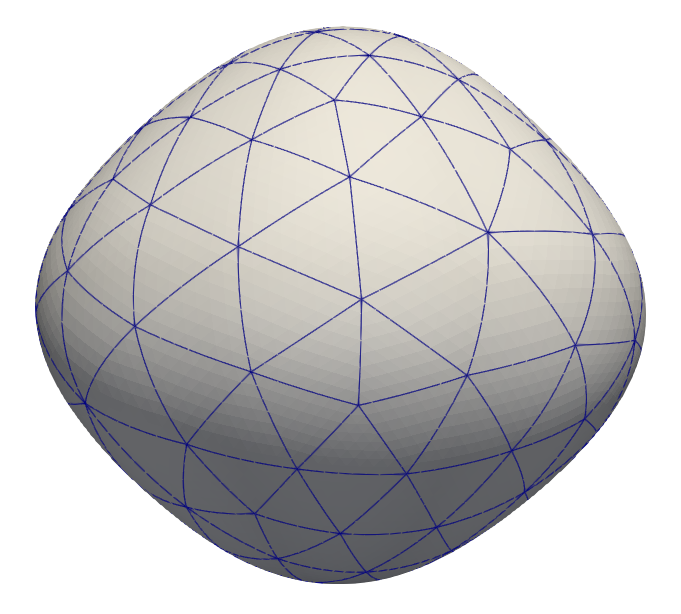}
        \caption{$t = 0.5$}
    \end{subfigure}
    \hfill
    \begin{subfigure}[t]{0.3\textwidth}
        \centering
        \includegraphics[width=\textwidth]{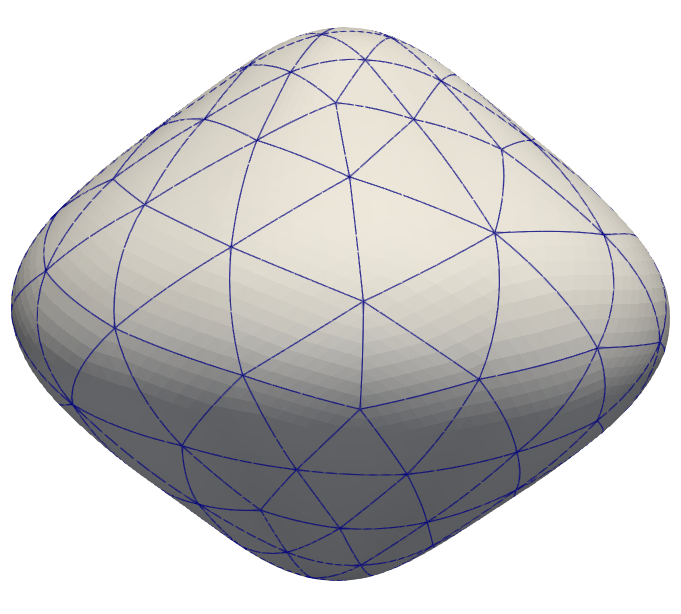}
        \caption{$t = 1$}
    \end{subfigure}

    \caption{Numerical simulation of the isometric embedding of the revolution metric \eqref{eq:rev_met} at different time steps.}
    \label{fig:rev_evolution}
\end{figure}

\end{example}

\begin{example}[Visualization of the normalized Ricci flow]
As an application of the proposed numerical scheme for isometric embedding, in this example, we investigate the visualization of the \emph{normalized Ricci flow}. 
Let \( g(t) \) be a family of metrics satisfying
\begin{equation}\label{eq:Ricci-flow}
    \partial_t g(t) = 2 \big( \bar{\kappa} - \kappa(t) \big) g(t),
\end{equation}
where \( \kappa(t) = \kappa(g(t)) \) denotes the Gaussian curvature of \( g(t) \), and 
\( \bar{\kappa} \) is the average of the initial curvature. 
Equation~\eqref{eq:Ricci-flow} is known as the normalized Ricci flow. 
In this example, we take the solution \( g(t) \) of \eqref{eq:Ricci-flow} as the prescribed right-hand side in \eqref{eq:vel}. 
The corresponding isometric embedding 
\( r(t): \Man \to \mathbb{R}^3 \) then provides a geometric visualization in \( \mathbb{R}^3 \) 
of the evolution governed by the Ricci flow \eqref{eq:Ricci-flow}. 
Note that the normalized Ricci flow preserves positive Gaussian curvature. 
Hence, if the initial metric satisfies \( \kappa(0) > 0 \), then \( g(t) \) remains of positive curvature for all \( t \). 
Another important geometric property of the normalized Ricci flow is that 
\( g(t) \) converges to a metric of constant curvature as \( t \to \infty \). 
We will investigate this behavior in the numerical simulation presented below.

In our numerical simulation, we take the following initial surface:
\begin{equation}
\begin{aligned}
    \Man &= \left( \begin{array}{c}
        (0.7\sin (\varphi)+0.1\sin(2\varphi)) \cos \theta \\[5pt]
        (0.7\sin (\varphi)+0.1\sin(2\varphi)) \sin \theta \\[5pt]
        0.5\cos (\varphi)
    \end{array} \right),
    \quad &&\theta \in [0, 2\pi), \quad \varphi \in [0, \pi]. 
\end{aligned}
\end{equation}
The initial metric \( g(0) \) is given by its induced metric.
We employ the finite element methods developed in \cite{gao2025ricci,gawlik2019finite} to discretize the Ricci flow \eqref{eq:Ricci-flow}. In these methods, Regge elements are used for the discretization of the metric \( g(t) \), while scalar Lagrange elements are employed for the discretization of the Gaussian curvature \( \kappa(t) \). The isometric embedding is then approximated using the scheme \eqref{eq:num}. All finite element spaces are taken to be of polynomial degree \( 5 \) with mesh size \( h = 0.2 \). The resulting numerical simulation of the isometric embedding is shown in Figure \ref{fig:ricci-flow}. It can be observed that the surface gradually evolves into a round sphere, corresponding to a metric of constant curvature, accurately capturing the geometric evolution of the Ricci flow.

\begin{figure}[htbp]
    \centering
    \begin{subfigure}[b]{0.31\textwidth}
        \includegraphics[width=\textwidth]{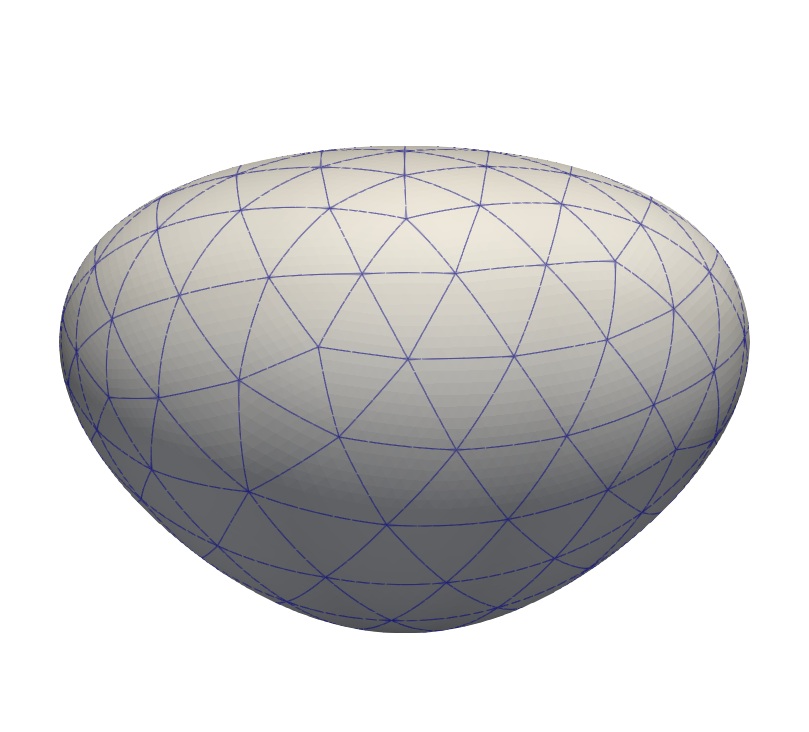}
        \caption{$t = 0$}
    \end{subfigure}
    \hfill
    \begin{subfigure}[b]{0.31\textwidth}
        \includegraphics[width=\textwidth]{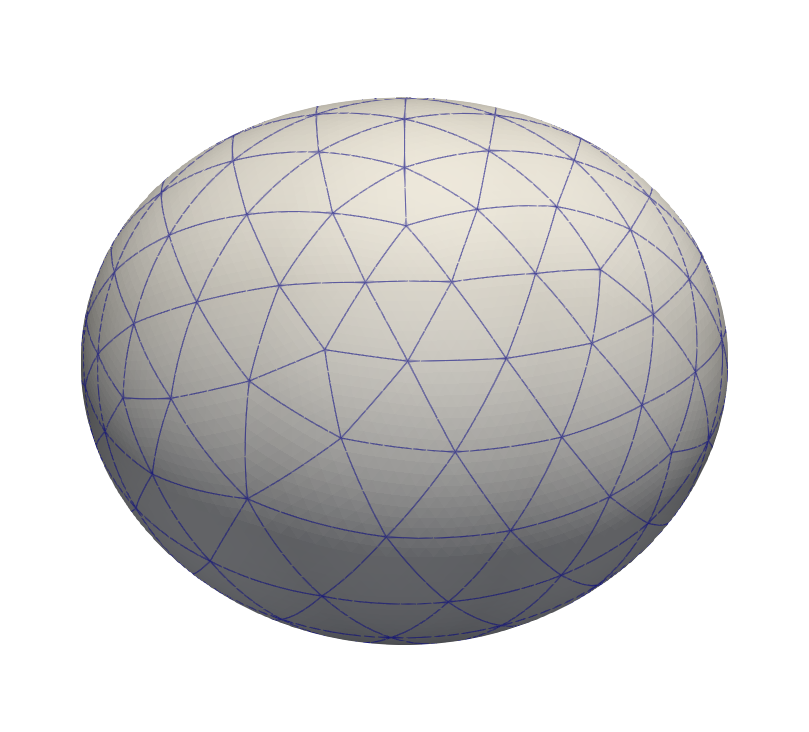}
        \caption{$t = 0.06$}
    \end{subfigure}
    \hfill
    \begin{subfigure}[b]{0.31\textwidth}
        \includegraphics[width=\textwidth]{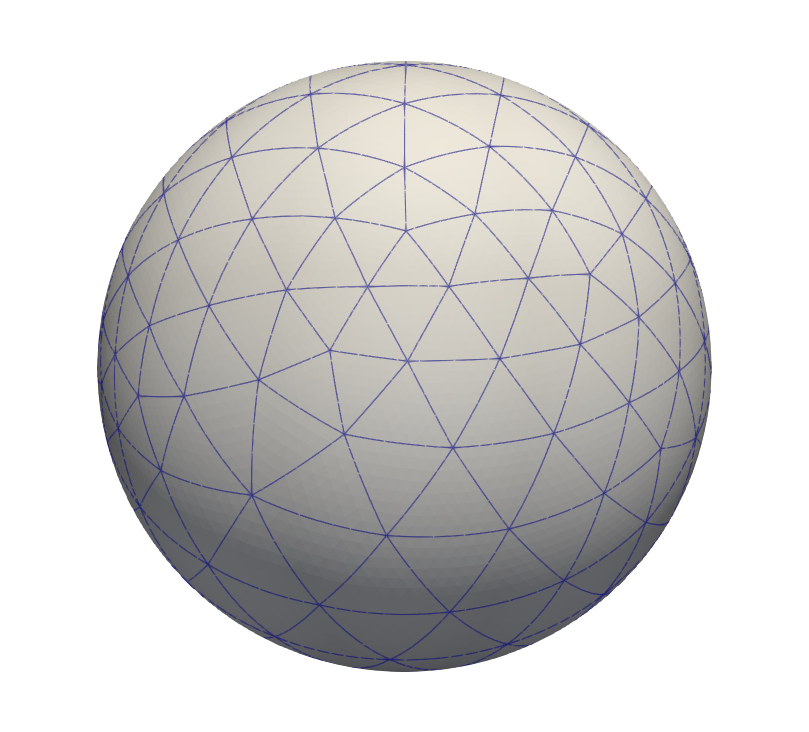}
        \caption{$t = 0.4$}
    \end{subfigure}
    \caption{Visualization of the normalized Ricci flow via isometric embeddings. 
    The surface evolves toward a round sphere corresponding to a constant-curvature metric.}
    \label{fig:ricci-flow}
\end{figure}

\end{example}

\section{Conclusion}  
We have proposed a convergent finite element method for approximating the isometric embedding of a two-dimensional Riemannian manifold with positive Gaussian curvature into \( \mathbb{R}^3 \).
We begin by transforming the problem of embedding a fixed Riemannian metric into a series of dynamic embedding problems, which are subsequently solved by the linear PDEs governing the velocity.
We introduce a new variational formulation to determine the velocity and address the nontrivial kernel of the velocity equations.
This formulation also naturally applies to the visualization of intrinsic curvature flows.
We analyze the well-posedness of the new variational formulation by establishing a Korn inequality on the manifold and discretize it using high-order finite element spaces and tensor-valued finite element spaces.
A discrete version of the Korn inequality is proved, ensuring the well-posedness of the proposed numerical discretization.
Convergence and error estimates are established for polynomial degrees \( k \geq 5 \).
The numerical analysis framework presented in this work represents the first systematic study of the finite element discretization of isometric embeddings and provides a fundamental approach for investigating related problems in the future.

\section*{Acknowledgments}
This work was partially supported by the National Natural Science Foundation of China (Project No. 12525111), and the European Union-Hong Kong Research Cooperation Co-funding Mechanism through the Research Grants Council of Hong Kong (Project No. E-PolyU502/24). 
This work was also supported in part by a Royal Society University Research Fellowship (URF$\backslash$R1$\backslash$221398) and the European Union (ERC, GeoFEM, 101164551).  Views and opinions expressed are however those of the authors only and do not necessarily reflect those of the European Union or the European Research Council. Neither the European Union nor the granting authority can be held responsible for them. 


\bibliographystyle{abbrv}
\bibliography{IsoEmb}

        
\newpage
\appendix
\renewcommand{\thesection}{Appendix}
\section*{More details in the stability estimates} \label{appendix}
\setcounter{section}{0}
\renewcommand{\theequation}{A.\arabic{equation}}
\renewcommand{\thesection}{Appendix \Alph{section}}
\setcounter{equation}{0}

\section{Existence and uniqueness of the continuous formulation}\label{appendix:Wely-emb}

Since the Gaussian curvature of $g(t)$ remains strictly positive for all $t \in [0,T]$, 
it follows from Weyl's embedding theorem \cite[Theorem 9.0.1]{han2006isometric} 
that there exists a smooth embedding 
$\hat r(t): \Man \rightarrow \mathbb{R}^3$, depending smoothly on $t \in [0,T]$, 
which satisfies the isometric embedding equation \eqref{eq:isoemb} for all $t \in [0,T]$. 
By shifting $\hat r(t)$ by a constant vector, we may assume that 
$\int_{\Man} \hat r(t)\, \Vol_{\Man} = 0$. 
Consequently, differentiating this relation with respect to $t$ yields 
$\int_{\Man} \partial_t \hat r(t)\, \Vol_{\Man} = 0$. 
Moreover, differentiating \eqref{eq:isoemb} with respect to $t$ shows that 
$\partial_t \hat r$ satisfies \eqref{eq:pt_isoemb}. 
Next, we will modify $\hat r(t)$ so that it also satisfies \eqref{eq:ptr_orth_RM}.

Firstly, for any orthogonal matrix $R(t) \in \mathrm{O}(3)$ that depends smoothly on $t$, we define a new embedding as
\begin{equation}\label{eq:wely-emb_R} 
    r(t,\cdot) = R(t)\hat r(t,\cdot) : \Man \to \mathbb{R}^3.
\end{equation} 
Then, by direct computation, $r(t,\cdot)$ satisfies \eqref{eq:isoemb}, and its time derivative $\partial_t r$ also satisfies \eqref{eq:pt_isoemb}.
To ensure that \eqref{eq:ptr_orth_RM} also holds, it suffices to find 
$R(t) \in \mathrm{O}(3)$ such that
\[
    \int_{\Man} \partial_t r \,\Vol_{\Man} = 0,
    \qquad\text{and}\qquad
    \int_{\Man} (\partial_t r \times r)\,\Vol_{\Man} = 0.
\]
Indeed, for the first equality we have 
    \[ \int_{\Man}\partial_t r(t)\,\Vol_{\Man} 
    = R(t) \int_{\Man}  \partial_t \hat r(t)\,\Vol_{\Man} + 
     \partial_t R(t) \int_{\Man}  \hat r(t)\,\Vol_{\Man} = 0. \]
On the other hand, we shall determine $R(t) \in \mathrm{O}(3)$ appropriately in order to enforce the second equality. Now, substituting expression \eqref{eq:wely-emb_R}, we obtain 
\[ \begin{aligned}
    \int_{\Man} (\partial_t r \times r)\,\Vol_{\Man} &= 
    \int_{\Man} \big( (\partial_t R \hat r) \times (R \hat r) + 
    (R \partial_t \hat r) \times (R \hat r)
    \big) \Vol_{\Man} \\ 
    &= R \int_{\Man}((R^{\top}\partial_t R \hat{r} ) \times  \hat{r} + 
    \partial_t \hat r \times \hat{r}    
    \big) \Vol_{\Man}.  
\end{aligned} \]
Since $R(t) \in \mathrm{O}(3)$ is an orthogonal matrix, it follows that 
$R^{\top}(t)\partial_t R(t) \in \mathbb{R}^{3\times 3}$ is skew-symmetric, and therefore 
there exists a vector $\eta(t) = (\eta_1, \eta_2, \eta_3)^{\top} \in \mathbb{R}^3$ such that
\[
    R^{\top}\partial_t R = \mathrm{skw}(\eta), 
    \quad \text{where} \quad
    \mathrm{skw}(\eta) \coloneqq
    \begin{pmatrix}
        0 & -\eta_3 & \eta_2 \\
        \eta_3 & 0 & -\eta_1 \\
        -\eta_2 & \eta_1 & 0
    \end{pmatrix}.
\]
Moreover, a direct computation shows that the following identity holds for all 
$\alpha \in \mathbb{R}^3$:
\begin{equation}\label{eq:skew} 
    (R^{\top}\partial_t R)\alpha = \eta \times \alpha.
\end{equation} 
Substituting this, we obtain
\[
\begin{aligned}
\int_{\Man}((R^{\top}\partial_t R \hat{r} ) \times  \hat{r} + 
    \partial_t \hat r \times \hat{r}    
    \big) \Vol_{\Man}
&= \int_{\Man}\!\big((\eta\times\hat r) \times \hat r +\partial_t\hat r \times \hat r \big)\,\Vol_{\Man} \\
&= \Big(\int_{\Man}\!(\hat r\hat r^{\top} - |\hat r|^2 I)\,\Vol_{\Man}\Big)\eta
\;+\; \int_{\Man}\!\partial_t\hat r \times \hat r\,\Vol_{\Man}.
\end{aligned}
\]
Note that 
\(
    \int_{\Man} (|\hat r|^2 I - \hat r \hat r^{\top})\,\Vol_{\Man} 
    \in \mathbb{R}^{3\times 3}
\)
is indeed an invertible matrix. This can be demonstrated by considering any 
$\alpha \in \mathbb{R}^3$, for which we have
\[
    \alpha^{\top}
    \Big( \int_{\Man} (|\hat r|^2 I - \hat r \hat r^{\top})\,\Vol_{\Man} \Big)
    \alpha
    =
    \int_{\Man} \big( |\alpha|^2 |\hat r|^2 - (\hat r \cdot \alpha)^2 \big)\,\Vol_{\Man}
    \geq 0.
\]
Here we have used the Cauchy--Schwarz inequality, and the equality can occur only if
\[
    |\alpha|^2 |\hat r(t,p)|^2 = (\hat r(t,p) \cdot \alpha)^2,
    \qquad \forall p \in \Man.
\]
This in fact implies $\alpha = 0$, since 
$\hat r : \Man \to \mathbb{R}^3$ is an embedding.
Consequently, we set
\[
    \eta 
    = 
    \Big(
        \int_{\Man} (|\hat r|^2 I - \hat r \hat r^{\top})\,\Vol_{\Man}
    \Big)^{-1}
    \int_{\Man} (\partial_t \hat r \times \hat r)\,\Vol_{\Man},
\]
and we determine $R(t) \in \mathrm{O}(3)$ by solving
\[
    \partial_t R = R\,\mathrm{skw}(\eta),
    \qquad R(0) = I_{3 \times 3},
\]
which is a standard ODE on the Lie group $\mathrm{O}(3)$, and hence admits a unique solution.

For the uniqueness, assume we have two embeddings $r_1(t)$ and $r_2(t)$ 
with the same initial value, and that both satisfy 
\eqref{eq:isoemb}, \eqref{eq:pt_isoemb}, and \eqref{eq:ptr_orth_RM}. 
It follows from the rigidity of smooth, closed, convex surfaces 
\cite[Theorem~8.1.2]{han2006isometric} that there exist 
$G(t)\in\mathrm{O}(3)$ and $c(t)\in\mathbb{R}^3$ such that
\begin{equation}\label{eq:r1r2}
r_1(t) = G(t)\, r_2(t) + c(t),
\quad \text{where } G(0)=I_{3\times 3},\; c(0)=0.
\end{equation}
From \eqref{eq:ptr_orth_RM}, we have
\[
\int_{\Man} r_1(t)\,\Vol_{\Man}=0,
\qquad 
\int_{\Man} r_2(t)\,\Vol_{\Man}=0,
\]
which implies $c(t)\equiv 0$. 
Again, from \eqref{eq:ptr_orth_RM}, we also have 
\[
\int_{\Man} (\partial_t r_1 \times r_1)\,\Vol_{\Man}=0,
\qquad
\int_{\Man} (\partial_t r_2 \times r_2)\,\Vol_{\Man}=0.
\]
Using this relation and substituting \eqref{eq:r1r2}, we obtain
\[
\begin{aligned}
0 &= \int_{\Man} (\partial_t r_1 \times r_1)\,\Vol_{\Man} 
= \int_{\Man} \big( (\partial_t G\, r_2 + G\,\partial_t r_2)\times (G r_2) \big)\,\Vol_{\Man} \\
&= \int_{\Man} G\big((G^\top\partial_t G\, r_2 + \partial_t r_2) \times r_2\big)\,\Vol_{\Man} 
= G \int_{\Man} (G^\top\partial_t G\, r_2 \times r_2)\,\Vol_{\Man}.
\end{aligned}
\]
This leads to
\[
\int_{\Man} (G^\top\partial_t G\, r_2 \times r_2)\,\Vol_{\Man} = 0.
\]
Using the fact that $G^\top\partial_t G$ is skew-symmetric and applying the 
same argument as in \eqref{eq:skew}, we conclude that
\(G^\top\partial_t G = 0\),
which implies $G(t)\equiv I_{3\times 3}$.

\section{Proof of the Korn inequality} \label{appendix:Korn}

Our approach is based on the study of the linearized equation for isometric embedding in \cite[Lemma 9.2.2]{han2006isometric}. In what follows, we denote the negative Sobolev space $H^{-k}(\Man)$ as the dual space of $H^{k}(\Man)$, with the norm
\[ \| w \|_{H^{-k}(\Man)} = \sup_{v \in H^{k}(\Man) \setminus \{0\}} \frac{(w, v)_{\Man}}{\|v\|_{H^k(\Man)}}. \]
An analogous definition applies to differential forms, and integration by parts yields
\begin{equation}\label{eq:esti_H-1} 
\begin{aligned}
    \| \mathrm{d} w \|_{H^{-1}(\Man)} &= \sup_{\mu \in H^1\Lambda^1(\Man)\setminus \{0\}} 
\frac{\int_{\Man} \mathrm{d} w \wedge \mu}{\| \mu \|_{H^1(\Man)} }  \\ 
& = \sup_{\mu \in H^1\Lambda^1(\Man)\setminus \{0\}} \frac{-\int_{\Man}  w \wedge \mathrm{d} \mu}{\| \mu \|_{H^1(\Man)} } 
\lesssim \| w \|_{L^2(\Man)}, \quad \forall w \in H^1(\Man),
\end{aligned}
\end{equation} 
where $H^1\Lambda^1(\Man)$ is the space of 1-forms with $H^1$ coefficients.

\smallskip

We prove the Theorem \ref{lm:Korn} by the following steps. We first assume $v\in (\RM[r])^{\perp}$ is sufficiently smooth.

\noindent{\it Step 1.}
Given any smooth, symmetric, second–order covariant tensor field 
$q = q_{ij}\,\mathrm{d}x^i \mathrm{d}x^j$ on $\Man$, we shall prove that there exists 
a smooth map $\tilde{v}:\Man \rightarrow \mathbb{R}^3$ satisfying 
\begin{equation}\label{eq:drdv} 
    \mathrm{d} r \odot \mathrm{d} \tilde{v} = q,
\qquad\text{and}\qquad
\int_{\Man} \tilde{v}\,\Vol_{\Man} = 0.
\end{equation} 
This is a direct consequence of the arguments in 
\cite[Lemma~9.2.2]{han2006isometric}; we state it here for completeness and 
refer the reader to the proof of \cite[Lemma~9.2.2]{han2006isometric} for further details.

For later use, we denote \( h_{ij} = \partial_{ij} r \cdot n \) to be the second fundamental form. Since the Gaussian curvature is strictly positive, the matrix \( (h_{ij})_{1 \leq i,j \leq 2} \) is symmetric positive definite. We introduce new variables as (see also \cite[(9.2.8-9.2.9)]{han2006isometric})
    \[ \begin{aligned}
        u_i &= n \cdot \partial_i \tilde  v, \quad i = 1,2, \\ 
        w &= \frac{1}{\sqrt{|g|}} 
        (\partial_2 r \cdot \partial_1 \tilde v - \partial_1 r \cdot \partial_2 \tilde  v),
    \end{aligned} \]
    where $|g| = \det (g_{ij})_{1 \leq i,j \leq 2}$. Note that $u= u_i \mathrm{d} x^i = n \cdot \mathrm{d} \tilde  v $ is a globally defined 1-form on $\Man$. 
    As shown in \cite[(9.2.12)]{han2006isometric}, by applying \eqref{eq:drdv}, the new variables \( \{ u_1, u_2, w \} \) determine \( \{ \partial_1 \tilde v, \partial_2 \tilde v \} \) as follows:
    \begin{equation}\label{eq:pv} 
    \begin{aligned}
        \partial_1 \tilde v &=  \frac{1}{2} g^{ij} q_{1j} \partial_i r + 
        \frac{1}{2} w \sqrt{|g|} g^{2i}\partial_i r + u_1 n, 
        \\  
        \partial_2 \tilde v &=  \frac{1}{2} g^{ij} q_{2j} \partial_i r - 
        \frac{1}{2} w \sqrt{|g|} g^{1i}\partial_i r + u_2 n. 
    \end{aligned}
    \end{equation}  
Following \cite[(9.2.17), (9.2.20)]{han2006isometric}, the new variables \( \{ u_1, u_2, w \} \) satisfy a system  
\begin{subequations}\label{eq:u1u2w}  
\begin{align}
\label{eq:u1} 
u_1 &= \frac{\sqrt{|g|}}{2} h^{2i} \partial_i w 
      - \frac{\sqrt{|g|}}{2} h^{2i} \gamma_i, \\
\label{eq:u2}
u_2 &= -\frac{\sqrt{|g|}}{2} h^{1i} \partial_i w 
        + \frac{\sqrt{|g|}}{2} h^{1i} \gamma_i, \\
\label{eq:w}
-\frac{1}{\sqrt{|g|}} \, \partial_i \bigl( \sqrt{|g|} \, h^{ij} \partial_j w \bigr) 
- 2 H w 
&= -\frac{1}{\sqrt{|g|}} \, \partial_i \bigl( \sqrt{|g|} \, h^{ij} \gamma_j \bigr) + T.    
\end{align}
\end{subequations}
Here, $H = \frac{1}{2}g^{ij}h_{ij}$ is the mean curvature, \( \gamma_i \) and \( T \) are given by
\begin{subequations}\label{eq:}  
\begin{align}
\label{eq:ci} 
\gamma_i &= \frac{1}{\sqrt{|g|}}
\big( \partial_1 {q}_{2i} - \partial_2 {q}_{1i}
+ \Gamma^{k}_{2i} {q}_{1k} - \Gamma^{k}_{1i} {q}_{2k} \big), \quad i = 1,2, \\ 
\label{eq:T} 
T &= \frac{1}{\sqrt{|g|}}
\big( h^{i}_{2} {q}_{1i} - h^{i}_{1} {q}_{i2} \big), 
\end{align}
\end{subequations}
where \( \Gamma_{ij}^m = \frac{1}{2} g^{ml} ( \partial_j g_{il} + \partial_i g_{lj} - \partial_l g_{ji} ) \) are the Christoffel symbols, and \( h^i_j = g^{il} h_{lj} \). 
Note that \(\gamma\coloneqq \gamma_i \, \mathrm{d}x^i \) is a globally defined 1-form on \( \Man \). 
We now focus on \eqref{eq:w} and define
\[
\mathcal{L} w \coloneqq -\frac{1}{\sqrt{|g|}} \, \partial_i \bigl( \sqrt{|g|} \, h^{ij} \partial_j w \bigr) - 2 H w.
\]
Since \( (h_{ij})_{1 \leq i,j \leq 2} \) is symmetric positive definite, it follows that \( \mathcal{L} \) is an elliptic operator.
Furthermore, \cite[pp. 173-174]{han2006isometric} shows that the kernel space of \( \mathcal{L} \) is given by
\[
\Ker(\mathcal{L}) = \{ \alpha \cdot n : \alpha \in \mathbb{R}^3 \},
\]
and the right-hand side of \eqref{eq:w} is perpendicular to the kernel space \( \Ker(\mathcal{L}) \).
Therefore, by the theory of second-order elliptic equations, \eqref{eq:w} admits a solution \( w \) with \( w \perp \Ker(\mathcal{L}) \).
With such a \( w \), we can solve for \( u_1 \) and \( u_2 \) in \eqref{eq:u1} and \eqref{eq:u2}.
By integrating \eqref{eq:pv}, we obtain \( \tilde v \) satisfying \eqref{eq:drdv}; moreover, prescribing the value of \( \tilde v \) at one point ensures that \( \int_{\Man} \tilde v \, \Vol_{\Man} = 0 \).

\noindent{\it Step 2.} 
We establish that the solution $\tilde{v} : \Man \rightarrow \mathbb{R}^3$ of \eqref{eq:drdv} 
satisfies the following estimate:
\begin{equation}\label{eq:drdv_esti}
\|\tilde{v}\|_{L^2(\Man)} + \|P\tilde{v}\|_{H^1(\Man)}
\lesssim \|w\|_{L^2(\Man)} + \|q\|_{L^2(\Man)}.
\end{equation}
Indeed, it follows from the product rule 
$\partial_i(P \tilde v) = (\partial_i P) \tilde  v + P (\partial_i\tilde  v)$ that 
\[ \begin{aligned}
    | \partial_i(P \tilde v) | &\lesssim |\tilde v| + |P \partial_i \tilde v| \\ 
    & \lesssim  |\tilde v| + |\partial_1 r \cdot \partial_i \tilde v|
    + |\partial_2 r \cdot \partial_i \tilde v| \\ 
    & \lesssim  |\tilde v| + |w| + \sum_{j,l = 1}^2 |q_{jl}|, 
\end{aligned} \]
where in the last step, we substitute the expression for $\partial_i \tilde v$ from \eqref{eq:pv}. Hence 
\begin{equation}\label{eq:pv_esti} 
\|P \tilde  v\|_{H^1(\Man)} \lesssim 
\| \tilde  v \|_{L^2(\Man)} + \| w \|_{L^2(\Man)} + \|q\|_{L^2(\Man)}.
\end{equation} 
Next, for the estimate of $\|\tilde v\|_{L^2(\Man)}$, recall that $\int_{\Man} \tilde  v \Vol_{\Man} = 0$. Then there exists $\psi_{\tilde v} \in H^2(\Man; \mathbb{R}^3)$ such that 
\[ (\mathrm{d} \psi_{\tilde v}, \mathrm{d} \chi)_{\Man} = (\tilde v, \chi)_\Man, \quad \forall \chi \in H^1(\Man; \mathbb{R}^3). \]
By substituting $\chi = \tilde  v$, we obtain 
\[ (\tilde v, \tilde v)_{\Man} = (\mathrm{d} \psi_{\tilde v}, \mathrm{d} \tilde v)_{\Man}
\lesssim \| \mathrm{d} \tilde v \|_{H^{-1}(\Man)} \|\psi_{\tilde v}\|_{H^2(\Man)}
\lesssim \| \mathrm{d} \tilde v \|_{H^{-1}(\Man)} \| \tilde v \|_{L^2(\Man)}, 
\]
where in the last step, we use the elliptic regularity estimate $ \|\psi_{\tilde v}\|_{H^2(\Man)} \lesssim \| \tilde v \|_{L^2(\Man)}$. 
Then, by substituting the expression for \( \partial_i \tilde v \) in \eqref{eq:pv} along with the equations \eqref{eq:u1} and \eqref{eq:u2}, the formula for \( \gamma_i \) in \eqref{eq:ci}, and the estimate in \eqref{eq:esti_H-1}, we arrive at
\begin{equation}\label{eq:v_esti} 
\begin{aligned}
    \|\tilde  v\|_{L^2(\Man)} &\lesssim \| \mathrm{d} \tilde v \|_{H^{-1}(\Man)} 
\lesssim \| w \|_{H^{-1}(\Man)} + \|q\|_{H^{-1}(\Man)} + \|u\|_{H^{-1}(\Man)} \\ 
& \lesssim \| w \|_{L^2(\Man)} + \|q\|_{L^2(\Man)} + \| \gamma\|_{H^{-1}(\Man)} 
+ \|\mathrm{d} w\|_{H^{-1}(\Man)} \\ 
& \lesssim \| w \|_{L^2(\Man)} + \|q\|_{L^2(\Man)}.
\end{aligned}
\end{equation} 
Combining \eqref{eq:v_esti} and \eqref{eq:pv_esti}, we deduce \eqref{eq:drdv_esti}, as desired.

\noindent{\it Step 3.} We then control $\|w\|_{L^2(\Man)}$ in terms of $\|q\|_{L^2(\Man)}$. This follows from an \(L^2\)-estimate for the elliptic equation \eqref{eq:w}, as shown below.

Since \( w \) is orthogonal to \( \Ker(\mathcal{L}) \), by the theory of elliptic equations, there exists \( \psi_w \in H^2(\Man) \) such that
\[
\mathcal{L} \psi_w = w, \quad \text{and} \quad 
\| \psi_w \|_{H^2(\Man)} \lesssim \| w \|_{L^2(\Man)}.
\]
Substituting \eqref{eq:w} and using that $\mathcal{L}$ is self-adjoint,  we conclude that
\begin{equation}\label{eq:esti_w}
\begin{aligned}
         \int_\Man w^2 \Vol_{g} &= \int_\Man w (\mathcal{L} \psi_w) \Vol_{g}
     = \int_\Man (\mathcal{L} w)  \psi_w\Vol_{g}\\
     &\lesssim \| \mathcal{L} w  \|_{H^{-2}(\Man)} \| \psi_w \|_{H^2(\Man)} \\ 
     & \leq \Big( \| \frac{1}{\sqrt{|g|}} \, \partial_i \bigl( \sqrt{|g|} \, h^{ij} \gamma_j \bigr)\|_{H^{-2}(\Man)} + \| T \|_{H^{-2}(\Man)}
        \Big) \| w \|_{L^2(\Man)} \\
     & \lesssim \Big( \|  \gamma \|_{H^{-1}(\Man)} + \| T \|_{L^2(\Man)}
        \Big) \| w \|_{L^2(\Man)}    \\ 
     & \lesssim  \|  q \|_{L^2(\Man)}  \| w \|_{L^2(\Man)},     
\end{aligned}
\end{equation} 
where in the last step, we substituted the expression for $\gamma_j$ from \eqref{eq:ci} and the formula for $T$ given in \eqref{eq:T}, in conjunction with the estimate \eqref{eq:esti_H-1}. It follows from \eqref{eq:esti_w} that 
\[ \| w \|_{L^2(\Man)}^2 \eqsim   \int_\Man w^2 \Vol_g  \lesssim  \|  q \|_{L^2(\Man)}^2. \]
Together with \eqref{eq:v_esti} and \eqref{eq:pv_esti}, we arrive at
\begin{equation}\label{eq:pf_Korn_2}
\|\tilde v\|_{L^2(\Man)} + \|P \tilde v\|_{H^1(\Man)} \lesssim \|q\|_{L^2(\Man)}.
\end{equation}

\noindent{\it Step 4.} Now, substituting $q = \mathrm{d}r \odot \mathrm{d}v$ in \eqref{eq:drdv} and it follows from 
\eqref{eq:rigid} that 
\[
(\tilde{v} - v) \in \Ker({\rm D}_r) = \RM[r].
\]
Since $v \in (\RM[r])^{\perp}$, it follows that
\[ (\tilde v - v, \mu)_{\Man} = (\tilde v, \mu)_{\Man}, \quad \forall \mu \in \RM[r]. \]
In other words, $(\tilde v - v)$ is the $L^2$-orthogonal projection of $\tilde v$ onto $\RM[r]$, and therefore
\[ \| \tilde v - v \|_{L^2(\Man)} \leq  \| \tilde  v \|_{L^2(\Man)} \lesssim \|q\|_{L^2(\Man)}= 
\|{\rm D}_r v\|_{L^2(\Man)}. \]
Now, by the norm equivalence in finite dimensional space $\RM[r]$, we arrive at
\begin{equation}\label{eq:pf_Korn_3}
\| \tilde v - v \|_{L^2(\Man)} + \| P (\tilde v - v) \|_{H^1(\Man)} 
\eqsim  \| \tilde v - v \|_{L^2(\Man)}  
\lesssim \|{\rm D}_r v\|_{L^2(\Man)}.
\end{equation} 
Combining this with \eqref{eq:pf_Korn_2}, it follows that
\[
\|v\|_{L^2(\Man)} + \|Pv\|_{H^1(\Man)} 
\lesssim \|{\rm D}_r v\|_{L^2(\Man)}.
\]
The proof is completed by a standard density argument.

\end{document}